\documentclass[11pt]{amsart}
\usepackage{mathrsfs}
\usepackage{geometry}
\usepackage{graphicx}
\usepackage{amssymb}
\usepackage{epstopdf}
\usepackage{amsmath,amscd}
\usepackage{amsthm}
\usepackage{enumerate}
\usepackage{url,verbatim}
\usepackage{subfig}
\usepackage{enumitem}
\usepackage{tikz}
\usepackage{enumerate}

\usepackage[normalem]{ulem}
\usepackage{fixme}

\calclayout

\makeatletter
\newcommand\xleftrightarrow[2][]{%
  \ext@arrow 9999{\longleftrightarrowfill@}{#1}{#2}}
\newcommand\longleftrightarrowfill@{%
  \arrowfill@\leftarrow\relbar\rightarrow}
\makeatother

\RequirePackage[colorlinks,citecolor=blue,urlcolor=blue]{hyperref}
\usepackage{breakurl}
\theoremstyle{plain}
\textwidth=430pt
\textheight=580pt
\DeclareGraphicsRule{.tif}{png}{.png}{`convert #1 `dirname #1 `/`basename #1 .tif`.png}

\newtheorem{theorem}{Theorem}
\newtheorem{definition}[theorem]{Definition}
\newtheorem{lemma}[theorem]{Lemma}
\newtheorem{proposition}[theorem]{Proposition}
\newtheorem{corollary}[theorem]{Corollary}
\newtheorem{example}[theorem]{Example}

\newtheorem{conjecture}[theorem]{Conjecture}

\newcommand\RR{{\mathbb R}}

\newcommand\HH{{\mathbb H}}

\newcommand\lra{\leftrightarrow}

\renewcommand\ell{l}


\newcounter{mycount}

\numberwithin{equation}{section}
\numberwithin{theorem}{section}
\numberwithin{figure}{section}

\title{Planar site percolation on semi-transitive graphs}

\author{Zhongyang Li}
\address{Department of Mathematics,
University of Connecticut,
Storrs, Connecticut 06269-3009, USA}
\email{zhongyang.li@uconn.edu}
\urladdr{\url{https://mathzhongyangli.wordpress.com}}

\begin{document}

\maketitle

\begin{abstract}Semi-transitive graphs, defined in \cite{hps98} as examples where ``uniform percolation" holds whenever $p>p_c$, are a large class of graphs more general than quasi-transitive graphs.
Let $G$ be a semi-transitive graph with one end which can be properly embedded into the plane with minimal vertex degree at least 7. We show that $p_u^{site}(G) +p_c^{site}(G_*)=1$,  where $G_*$ denotes  the matching  graph  of $G$.     This fulfils and extends an observation of Sykes and Essam in 1964 (\cite{SE64}) to semi-transitive graphs.
\end{abstract}

\section{Introduction}

Introduced by Broadbent and Hammersley in 1957 (see \cite{BH57}) to study the random spread of a fluid through a medium, percolation has been a celebrated model illustrating the phase transition, magnetization, or the spread of pandemic diseases; see \cite{grgP,HDC18} for recent accounts of the theory.

 Let $G=(V,E)$ be a graph. We write $e=\langle u,v\rangle$ for an edge with endpoints $u$ and $v$; where $u,v\in V$ and $e\in E$.
The (\emph{vertex}-)\emph{degree} of a vertex $v\in V$ is the number 
of edges incident to $v$; i.e.~edges one of whose endpoints is $v$.  We say a graph is locally finite if each vertex has finite degree.

 Assume $G=(V,E)$ is an infinite, locally finite, connected graph. 
A site percolation configuration $\omega\in \{0,1\}^{V}$  is a an assignment to each vertex in $G$ of either state 0 or state 1. A cluster in $\omega$ is a maximal connected set of vertices in which each vertex has the same state in $\omega$. A cluster may be a 0-cluster or a 1-cluster depending on the common state of vertices in the cluster.  A cluster may be finite or infinite depending on the total number of vertices in it. We say that percolation occurs in $\omega$ if there exists an infinite 1-cluster in $\omega$.

\begin{definition}\label{df11}
A planar graph $G$ is a graph that can be drawn in the plane $
\RR^2$, with vertices represented
by points and edges represented by curves, such that edges can intersect only at vertices. 

Let $\mathcal{R}\subseteq \RR^2$ be an open, connected subset of $\RR^2$.
We call a drawing of the graph into the plane a proper embedding in $\mathcal{R}$ if any compact subset $K$ in $\RR^2$ satisfying $K\subset \mathcal{R}$ intersects at most finitely many edges and vertices. 
\end{definition}

See also Section 2 of \cite{bsjams} for the definition of proper embeddings. From Definition \ref{df11} we see that if a graph $G$ can be properly embedded into $\RR^2$, then it is locally finite.
Since $\RR^2$ is homeomorphic to any simply-connected open subset of $\RR^2$, a graph can be properly embedded into $\RR^2$ if and only if it can be properly embedded into any simply-connected open subsets of $\RR^2$, in particular the unit disk (with appropriate metric this gives the hyperbolic plane $\HH^2$); see \cite{CFKP97,bsjams,ZL17, GrL20,GrZL22,GrZL221} for graphs embedded into the hyperbolic plane $\HH^2$ and statistical mechanical models on such graphs. However, in general a graph cannot be circle packed in both $\RR^2$ and $\HH^2$(\cite{HS1,HS2}).

Of particular interest is the i.i.d.~Bernoulli site percolation on a graph. In such a model, an independent Bernoulli random variable, which takes value 1 with probability $p\in [0,1]$, is associated to each vertex.  For the i.i.d.~Bernoulli site percolation, define
\begin{small}
\begin{align}
p_c^{site}(G):&=&\inf\{p\in[0,1]: \mathrm{Bernoulli}(p)\ \mathrm{site\ percolation\ on}\ G\ \mathrm{has\ an\ infinite\ 1-cluster\ a.s.} \}\label{dpc}\\
p_u^{site}(G):&=&\inf\{p\in[0,1]: \mathrm{Bernoulli}(p)\ \mathrm{site\ percolation\ on}\ G\ \mathrm{has\ a\ unique\ infinite\ 1-cluster\ a.s.} \}\label{dpu}
\end{align}
\end{small}

It follows immediately that $p_c^{site}(G)\leq p_u^{site}(G)$. If strict inequalities hold, then for certain $p$, there is a strictly positive probability that in the i.i.d.~Bernoulli($p$) percolation at least two infinite 1-clusters exist.
A number of problems related to the uniqueness and non-uniqueness of infinite percolation clusters
were formulated by Benjamini and Schramm in
their influential paper \cite{bs96}, including the following one.

\begin{conjecture}\label{c11}
Conjecture 1.6 (Conjecture 7 in \cite{bs96}). Consider site percolation on an infinite, connected,
planar graph G with minimal degree at least 7. Then, for any $p\in  (p_c^{site},1-p_c^{site})$, a.s.~there are infinitely many infinite 1-clusters in the i.i.d.~Bernoulli($p$) site percolation on $G$. Moreover, it is the case that $p_c^{site}<\frac{1}{2}$ , so the above interval is
invariably non-empty.
\end{conjecture}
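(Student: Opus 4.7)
The plan is to derive Conjecture~\ref{c11} (for the class considered in this paper) from the forthcoming main identity $p_u^{site}(G)+p_c^{site}(G_*)=1$, together with a separate bound $p_c^{site}(G)<\tfrac12$. By construction, the matching graph $G_*$ is obtained from $G$ by adding edges within each face, so $G\subseteq G_*$ as a spanning subgraph. Using the same Bernoulli configuration on the common vertex set, every infinite $1$-cluster in $G$ is contained in an infinite $1$-cluster in $G_*$, hence $p_c^{site}(G_*)\le p_c^{site}(G)$. Combined with the matching identity this yields $p_u^{site}(G)\ge 1-p_c^{site}(G)$, so
\[
(p_c^{site}(G),\,1-p_c^{site}(G))\ \subseteq\ (p_c^{site}(G),\,p_u^{site}(G)).
\]

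For any $p$ in the right-hand interval there is a.s.\ an infinite $1$-cluster but it is not a.s.\ unique. A standard Newman--Schulman finite-energy argument, valid in the semi-transitive setting (cf.~\cite{hps98}), promotes ``number of infinite $1$-clusters is not $1$'' to ``number of infinite $1$-clusters equals $\infty$'' almost surely; this gives the first assertion of the conjecture throughout $(p_c^{site}(G),1-p_c^{site}(G))$.

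It remains to prove $p_c^{site}(G)<\tfrac12$, which simultaneously guarantees that the interval is non-empty. The natural route is a Peierls contour estimate on $G_*$: by planar duality for site percolation, a finite $1$-cluster containing a fixed vertex $\o$ is enclosed by a $0$-circuit in $G_*$ separating $\o$ from infinity, so
\[
\PP_p(\text{no infinite $1$-cluster at }\o)\ \le\ \sum_{n\ge n_0} c_n (1-p)^n,
\]
where $c_n$ counts such $G_*$-circuits of length $n$. The minimum-degree-$7$ hypothesis, together with Euler's formula for planar graphs, forces a hyperbolic-like structure under which $c_n$ grows exponentially with rate strictly less than $2$; the series then converges for all $p>\tfrac12-\epsilon$ for some $\epsilon>0$, yielding $p_c^{site}(G)\le\tfrac12-\epsilon<\tfrac12$.

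The main obstacle is the Peierls step in the semi-transitive rather than quasi-transitive setting: one needs a uniform bound on the face sizes of $G$ (equivalently on the sizes of the cliques added to form $G_*$), plus enough translation symmetry to estimate $c_n$ independently of the base vertex. A secondary difficulty is the non-uniqueness step itself: the classical Burton--Keane dichotomy fails on non-amenable hyperbolic-like graphs, so excluding a finite number $\ge 2$ of infinite clusters must rely on Newman--Schulman-style insertion/removal arguments combined with the one-ended planar topology (which prevents infinite $1$- and infinite $0$-clusters from both existing unless $1$-clusters are infinitely fragmented). Both of these obstacles are precisely where the paper's semi-transitive, one-ended, minimum-degree-$7$ hypotheses pay off.
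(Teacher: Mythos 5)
Note first that the statement you are addressing is labelled as a \emph{Conjecture} in the paper (Benjamini--Schramm's Conjecture 7), and the paper does not claim to prove it in full generality; it proves partial versions under extra hypotheses (semi-transitivity, uniform percolation, proper embedding, bounded degree). Your proposal is therefore necessarily a proof ``for the class considered in this paper,'' which you acknowledge, but even under that restriction the logical flow runs the wrong way. You propose to derive the conjecture from the matching identity $p_u^{site}(G)+p_c^{site}(G_*)=1$ (Theorem \ref{mt1}). However, the paper's proof of Theorem \ref{mt1} passes through Lemma \ref{l113} and Lemma \ref{l114}, and Lemma \ref{l113} is proved by invoking Lemma \ref{lt1} (cited from \cite{ZL231}), which already asserts ``infinitely many infinite $1$-clusters for $p\in(p_c^{site}(G),1-p_c^{site}(G_*))$'' and $p_c^{site}(G)<\tfrac12$. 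Since $p_c^{site}(G_*)\le p_c^{site}(G)$, that interval \emph{contains} $(p_c^{site}(G),1-p_c^{site}(G))$, so Lemma \ref{lt1} already gives the restricted conjecture outright; routing through Theorem \ref{mt1} is circular and adds nothing.

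The two substantive ingredients you propose are also not the ones the paper uses, and both have gaps. The ``Newman--Schulman finite-energy argument'' to promote non-uniqueness to infinitely many infinite clusters relies on a $0$-$1$ or ergodicity-type statement for the number of infinite clusters; this is exactly what fails to be automatic outside the quasi-transitive world, and it is precisely the difficulty the paper circumvents via the uniform-percolation machinery (Proposition \ref{p62}) and the explicit embedded trees of Lemma \ref{l59}, which let one manufacture infinitely many disjoint infinite $1$-clusters directly (Lemmas \ref{l63}--\ref{l66}, Theorem \ref{t68}). Likewise, the Peierls bound you sketch for $p_c^{site}(G)<\tfrac12$ requires a uniform control on the number $c_n$ of $G_*$-circuits surrounding a vertex; in a semi-transitive graph with no bound on face degree, $G_*$ can have unbounded vertex degree and your counting breaks down. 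The paper does not reprove this bound: it imports it as part of Lemma \ref{lt1}/\ref{l96} from \cite{ZL231}, where it is established under a uniformly bounded face degree hypothesis via the embedded-tree construction, not via a Peierls contour count. In short, your plan inverts the paper's dependency graph and substitutes two classical but here inapplicable tools for the paper's actual mechanisms.
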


A graph is called vertex-transitive (resp.\ quasi-transitive) when there is a unique orbit (at most finitely many orbits) of vertices under the action of its automorphism group. Invariant percolation processes on quasi-transitive graphs have been studied extensively, in which the symmetry property associated with the quasi-transitivity makes the analysis more convenient, and interesting techniques were developed, for example, the mass-transport principle (MTP); see \cite{BLPS1,BLPS2}. 
Conjecture \ref{c11} was proved in \cite{ZL17} when the graph is a transitive triangulation of the plane with vertex degree at least 7; and in \cite{GrZL22} when the graph $G$ is infinite, locally finite, planar, 2-connected, simple and quasi-transitive. Furthermore, it is proved in \cite{GrZL221} that if the graph $G$ is infinite, locally finite, planar, 2-connected, simple, transitive and not a planar triangulation, the at $1-p_c^{site}$ a.s.~there are infinitely many infinite 1-clusters.
Without the quasi-transitivity assumption, many existing techniques, e.g.~ergodicity of measures, MTP, etc.,  do not work any more. 

One approach to overcome this difficulty is based on the ``uniform percolation" defined in \cite{RS98}. In the absence of quasi-transitivity, the ``uniform percolation" gives ``similarities" to different vertices which make the analysis work. One important consequence of the ``uniform percolation" is the ``stability of infinite clusters'', which states that for any $0\leq p_1<p_2\leq 1$, if there is uniform percolation at level $p_1$, then every infinite 1-cluster in the i.i.d.~Bernoulli($p_2$) percolation a.s.~contains at least one infinite 1-cluster in the i.i.d.~Bernoulli($p_1$) percolation. See also Theorem 1.10 in \cite{HT21}.
One class of graphs satisfying the uniform percolation condition are the``semi-transitive graphs" defined in \cite{hps98}, which are a general class of graphs including all the quasi-transitive graphs. The main goal of this paper is to investigate the Conjecture \ref{c11} with the ``uniform percolation" assumption but without the quasi-transitive assumption. 

Here are the main results of the paper.

\begin{theorem}\label{t68}Let $G=(V,E)$ be an infinite, connected graph of uniformly bounded vertex degree. Assume that $G$ can be properly embedded into $\RR^2$ such that one of the following 2 conditions holds:
\begin{enumerate}
    \item the minimal vertex degree is at least 7; or
    \item the minimal vertex degree is at least 5; and the minimal face degree is at least 4.
\end{enumerate} 
Furthermore, Suppose that there exists $p_0\in \left[p_c^{site}(G),\frac{1}{2}\right)$, such that 
 for every $p\in \left(p_0,\frac{1}{2}\right)$ there is uniform percolation in $G$. Then for all $p\in \left(p_0,1-p_0\right)$, a.s. there are infinitely many infinite 1-clusters and infinitely many infinite 0-clusters.
\end{theorem}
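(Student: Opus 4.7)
The plan is to establish, for each $p\in(p_0,1-p_0)$, both coexistence of infinite $1$- and $0$-clusters and non-uniqueness of each; the Newman--Schulman zero-one law then forces the number of infinite clusters of each colour to be a.s.\ in $\{0,1,\infty\}$, and by non-uniqueness it must be $\infty$.

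\emph{Coexistence} is immediate from the hypotheses: for $p\in(p_0,1-p_0)$ we have both $p>p_0\geq p_c^{site}(G)$ and $1-p>p_0\geq p_c^{site}(G)$, and Bernoulli$(p)$ $0$-percolation on $G$ is Bernoulli$(1-p)$ $1$-percolation on $G$, so a.s.\ there is at least one infinite $1$-cluster and at least one infinite $0$-cluster.

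\emph{Non-uniqueness} is the core. I would argue by contradiction. Suppose a unique infinite $1$-cluster $C_\infty$ existed at some $p\in(p_0,1/2)$. The coexisting infinite $0$-cluster, together with the proper planar embedding and the matching graph $G_*$, should produce a bi-infinite simple $G_*$-path separating $\RR^2$ into two unbounded half-planes; applying a Zhang-type square-root trick in each half-plane---using the uniform percolation hypothesis on $(p_0,1/2)$, which provides the stability of infinite clusters under monotone coupling in the absence of mass-transport---would then produce at least two disjoint infinite $1$-clusters, contradicting the uniqueness of $C_\infty$. The degree conditions (1) and (2) enter through the combinatorial Gauss--Bonnet bound, which makes both $G$ and $G_*$ negatively curved; this is what guarantees that the $0$-$1$ interface bounding a hypothetical unique $C_\infty$ has infinitely many unbounded components and that the half-plane pieces on which the square-root trick is run are genuinely infinite. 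For $p\in[1/2,1-p_0)$ the same conclusion follows by exchanging the roles of the colours and applying the previous argument at level $1-p\in(p_0,1/2]$; the analogous statement for infinite $0$-clusters is obtained by the colour-swap throughout.

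The main obstacle is the non-uniqueness step, where the lack of quasi-transitivity rules out the standard MTP-based arguments available in \cite{BLPS1,BLPS2,GrZL22}. The uniform percolation hypothesis is there precisely to substitute for the missing symmetry: it yields the stability of infinite clusters across any $p_1<p_2$ in the uniform-percolation range, which is what allows the planar separation/square-root argument to be run in a way robust under loss of automorphism-invariance. Quantifying this substitution---ensuring that the Zhang-type argument on an a priori non-translation-invariant $G_*$-interface still delivers two disjoint unbounded $1$-clusters, and that the curvature bound propagates to the matching graph in both degree regimes---is the technical heart of the proof.
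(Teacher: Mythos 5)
Your coexistence step is fine, but the non-uniqueness argument has two gaps that I think are fatal as written. First, the final upgrade from ``at least two'' to ``infinitely many'' via the Newman--Schulman $\{0,1,\infty\}$ law is not available here: that law rests on ergodicity and invariance of the percolation measure under a group acting with infinite orbits (quasi-transitivity), which Theorem \ref{t68} does not assume. On a general bounded-degree graph the number of infinite clusters is not even a tail event (opening or closing finitely many sites can merge or split infinite clusters), so it need not be an a.s.\ constant, let alone lie in $\{0,1,\infty\}$. Second, the Zhang-type square-root trick is precisely the step that breaks without symmetry: it needs the crossing events in the several half-planes or directions to have equal, or at least uniformly comparable, probabilities, and uniform percolation only supplies a uniform lower bound on the probability that a ball meets an infinite cluster --- it does not make the relevant events exchangeable under any symmetry. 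You flag this as the technical heart but do not supply the substitute, and I do not see how to supply one along these lines.

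The paper's route is different and avoids both issues. It uses the explicitly embedded tree $T\subset G$ of Lemma \ref{l59}, whose branching is large enough that $p_c^{site}(T)<\tfrac{1}{2}$, so that at level $\tfrac{1}{2}$ both colours percolate inside disjoint subtrees of $T$. Along infinitely many pairwise disjoint subtrees one builds independent local events, each of which (after closing the finitely many sites in a fixed ball, at a uniformly positive probability cost thanks to bounded degree) forces at least three disjoint infinite clusters of the relevant colour outside that ball; stability of infinite clusters under the standard coupling (Proposition \ref{p62}, which is the actual consequence of the uniform percolation hypothesis) transports these configurations from $p=\tfrac{1}{2}$ to every $p$ in $(p_0,1-p_0)$, and Borel--Cantelli over the independent events yields infinitely many infinite clusters directly, with no zero-one law and no square-root trick. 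To salvage your outline you would need to replace both the Newman--Schulman step and the Zhang step by an explicit construction of this kind.
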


Without the uniform percolation assumption, similar results are proved in a companion paper \cite{ZL231} for graphs satisfying condition (1) of Theorem \ref{t68} without the ``uniformly bounded vertex degree" assumption but with the ``uniformly bounded face degree for finite faces" assumption. Here the ``uniformly bounded vertex degree" condition comes from using uniform percolation to obtain ``stability of infinite clusters" as in \cite{RS98}; while the ``uniformly bounded face degree" condition in \cite{ZL231} comes from proving the exponential decay of connectivity, for which we construct a number of disjoint embedded trees separating two vertices; the number of these trees goes to infinity linearly as the distance of two vertices goes to infinity.

We also obtain a universal interval of $p$ for which ``uniform percolation" holds.

\begin{theorem}\label{l613}Let $G=(V,E)$ be a connected, locally finite, non-amenable graph with vertex iso-perimetric constant given by
\begin{align*}
\mathbf{i}_V(G):=\inf\left\{\frac{|\partial_V K|}{|K|}:K\subset V\ \mathrm{finite}\right\}>0
\end{align*}
where $\partial_V K$ consists of all the vertices adjacent to a vertex in $K$ but not in $K$ itself.
Then in the i.i.d.~Bernoulli site percolation on $G$, there is uniform percolation at each $p\in (\frac{1}{\mathbf{i}_V(G)+1},1)$.
\end{theorem}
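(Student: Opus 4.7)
The plan is a Peierls-type exploration of the $1$-cluster of a fixed vertex $v$, with an exponential-martingale argument tuned to the vertex-isoperimetric constant $\mathbf{i}_V(G)$ to replace the role of a maximum-degree bound (which is not assumed). Reveal the states of vertices one at a time in a breadth-first exploration from $v$: let $S_t$ denote the set of $1$-vertices exposed by time $t$ (the cluster-so-far) and $\Delta_t$ the set of exposed $0$-vertices; the exploration terminates at time $T$ precisely when $\partial_V S_T\subseteq \Delta_T$, which forces $|\Delta_T|\ge \mathbf{i}_V(G)\,|S_T|$ by the isoperimetric hypothesis. Pick $\mu>0$ small and define $\nu>0$ by $pe^{-\mu}+(1-p)e^{\nu}=1$, so that $Y_t:=\exp\bigl(-\mu|S_t|+\nu|\Delta_t|\bigr)$ is a non-negative martingale with $Y_0=1$. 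Set $c:=\nu\,\mathbf{i}_V(G)-\mu$; the assumption $p>1/(\mathbf{i}_V(G)+1)$ is equivalent to $p\,\mathbf{i}_V(G)>1-p$, which together with $\nu/\mu\to p/(1-p)$ as $\mu\to 0^+$ ensures $c>0$ for small $\mu$. On $\{T<\infty\}$ the isoperimetric bound thus yields $Y_T\ge e^{c|S_T|}$.

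Applying Fatou's lemma to the bounded stopped martingales $Y_{T\wedge n}$ gives $\mathbb{E}[Y_T\,\mathbf{1}_{T<\infty}]\le \lim_n \mathbb{E}[Y_{T\wedge n}]=1$. Splitting the expectation across $\{v\text{ is }0\}$ (where $T=1$ and $Y_T=e^{\nu}$) and $\{v\text{ is }1,\,T<\infty\}$ (where $|S_T|\ge 1$, so $Y_T\ge e^{c}$) yields
\begin{equation*}
(1-p)e^{\nu}\;+\;e^{c}\,\mathbb{P}[v\text{ is }1,\,T<\infty]\;\le\;1.
\end{equation*}
Since $(1-p)e^{\nu}=1-pe^{-\mu}$, this rearranges to $\mathbb{P}[v\text{ is }1,\,T<\infty]\le pe^{-\mu-c}=pe^{-\nu\mathbf{i}_V(G)}$, and hence $\mathbb{P}[v\in C_\infty^{1}]\ge p\bigl(1-e^{-\nu\mathbf{i}_V(G)}\bigr)=:\theta_0>0$, with $\theta_0$ uniform in $v$. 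A parallel level-by-level estimate—applying $Y_T\ge e^{ck}$ on $\{|S_T|=k\}$—yields $\mathbb{P}[|C_v^{1}|=k,\,\text{finite}]\le pe^{-\mu-ck}$, i.e.~exponential decay of the size of finite $1$-clusters with a rate depending only on $p$ and $\mathbf{i}_V(G)$.

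To upgrade these to uniform percolation, I would combine the uniform positivity $\theta_v(p)\ge\theta_0$ with the exponential tail. The tail forces exponential decay of cluster radii, so on the event $\{B_R(v)\cap C_\infty^{1}=\emptyset\}$ the union $U$ of all finite $1$-clusters meeting $B_R(v)$ is, with probability tending to $1$ in $R$, contained in a mildly-enlarged ball; a Peierls contour sum over the admissible shapes of $U$, controlled via $|\partial_V U|\ge \mathbf{i}_V(G)|U|$ and requiring no degree bound, then shows $\mathbb{P}[B_R(v)\cap C_\infty^{1}=\emptyset]\to 0$ as $R\to\infty$, uniformly in $v$. The principal obstacle is precisely this final step: FKG applied to the decreasing events $\{u\notin C_\infty\}$ for $u\in B_R(v)$ points the wrong direction to yield a product upper bound, so the argument must weave the exponential decay together with the exponential growth $|B_R(v)|\ge(1+\mathbf{i}_V(G))^{R}$ of balls afforded by non-amenability.
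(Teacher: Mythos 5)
Your exponential-martingale computation is sound as far as it goes: the condition $pe^{-\mu}+(1-p)e^{\nu}=1$ does make $Y_t$ a martingale, the isoperimetric bound $|\Delta_T|\ge \mathbf{i}_V(G)|S_T|$ on termination is correct, and the conclusion $\inf_v\mathbb{P}[v\leftrightarrow\infty]\ge p(1-e^{-\nu\mathbf{i}_V(G)})>0$ together with an exponential tail for $|C_v|$ follows. But none of this is the statement to be proved. Uniform percolation requires $\sup_v\mathbb{P}[B(v,N)\cap C_\infty=\emptyset]\to 0$ as $N\to\infty$, and your final paragraph, which is the only place this is addressed, is a sketch that you yourself flag as blocked (FKG points the wrong way). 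Moreover the proposed repair does not work as described: a ``Peierls contour sum over the admissible shapes of $U$'' requires counting connected vertex sets of a given size containing a given point, and without a degree bound (the theorem assumes only local finiteness) that count is uncontrolled; likewise your claim that $U$ sits in a mildly enlarged ball with high probability needs a union bound over $B_R(v)$ whose cardinality is not uniformly bounded in $v$. So the proof is genuinely incomplete at exactly the step the theorem asks for.

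The missing idea, which is how the paper proceeds, is to run the exploration not from a single vertex but sequentially through all of $B(v,N)$: explore the cluster of $v$, then the cluster of the first unexplored vertex of $B(v,N)$, and so on. On the event that $B(v,N)$ meets no infinite cluster, this terminates with a finite set $C$ (the union of the explored open clusters) and $W=\partial_V C$ consisting of closed vertices, with $L:=|C\cup W|\ge|B(v,N)|\ge N$ and, by isoperimetry applied to $C$, $|W|\ge\frac{\mathbf{i}_V(G)}{1+\mathbf{i}_V(G)}L$. Since the states revealed are i.i.d.\ Bernoulli($p$) in exploration order, the event $\{|C\cup W|=L,\ W=\partial_V C\}$ is a large-deviation event for the first $L$ revealed variables, so Hoeffding (or your own martingale $Y_t$, run on the concatenated exploration) gives a bound $e^{-c'L}$ with $c'>0$ for $p>\frac{1}{1+\mathbf{i}_V(G)}$, summable over $L\ge N$ uniformly in $v$. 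This bypasses FKG, requires no shape-counting and no degree bound, and directly yields uniform percolation. If you replace your last paragraph with this multi-seed exploration, your martingale machinery carries the estimate through.
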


Moreover, we have
\begin{theorem}\label{t613}Let $G=(V,E)$ be a one-ended, planar graph of uniformly bounded vertex degree and no infinite faces. Assume that one of the following two conditions holds:
\begin{enumerate}
    \item the minimal vertex degree is at least 7; or
    \item the minimal vertex degree is at least 5; and the minimal face degree is at least 4.
\end{enumerate}
Then
\begin{itemize}
\item For all 
$p\in \left(\frac{1}{1+\mathbf{i}_{V}(G)},\frac{\mathbf{i}_V(G)}{1+\mathbf{i}_{V}(G)}\right)$, a.s. there are infinitely many infinite 1-clusters and infinitely many infinite 0-clusters.
\item If $G$ is semi-transitive, then for all
$p\in \left(p_c^{site}(G),1-p_c^{site}(G)\right)$, a.s. there are infinitely many infinite 1-clusters and infinitely many infinite 0-clusters.
\end{itemize}
\end{theorem}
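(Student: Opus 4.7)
The overall strategy is to combine Theorems~\ref{l613} and~\ref{t68}. First I would verify that $G$ is non-amenable, i.e.\ $\mathbf{i}_V(G)>0$. Under either hypothesis (1) or (2) the hyperbolic combinatorial inequality $(d-2)(f-2)>4$ on the minimal vertex and face degrees is satisfied (with $d=7,\,f\ge3$ in case (1) and $d\ge5,\,f\ge4$ in case (2)); together with the assumptions that $G$ is one-ended, planar, of uniformly bounded vertex degree, and has no infinite faces, a standard Gauss--Bonnet/discharging argument yields a linear vertex-isoperimetric inequality $|\partial_V K|\ge c|K|$ for all finite $K\subset V$ with some explicit $c=c(d,f)>0$, so $\mathbf{i}_V(G)>0$. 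Theorem~\ref{l613} then provides uniform percolation at every $p\in\bigl(\tfrac{1}{1+\mathbf{i}_V(G)},1\bigr)$, and since uniform percolation forces an a.s.\ infinite $1$-cluster, it follows that $p_c^{site}(G)\le\tfrac{1}{1+\mathbf{i}_V(G)}$.

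For the first bullet I set $p_0:=\tfrac{1}{1+\mathbf{i}_V(G)}$. If $\mathbf{i}_V(G)\le1$ then $p_0\ge\tfrac12$, so the open interval $(p_0,1-p_0)$ is empty and there is nothing to show. Otherwise $p_0<\tfrac12$, and the above bound gives $p_0\in[p_c^{site}(G),\tfrac12)$ while uniform percolation holds throughout $(p_0,\tfrac12)$; the hypotheses of Theorem~\ref{t68} are therefore met, and its conclusion yields infinitely many infinite $1$-clusters and infinitely many infinite $0$-clusters a.s.\ for every $p\in(p_0,1-p_0)$, which is exactly the stated interval.

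For the second bullet I would exploit the defining property of semi-transitive graphs from \cite{hps98}: uniform percolation holds at \emph{every} $p>p_c^{site}(G)$. Setting $p_0:=p_c^{site}(G)$, either $p_c^{site}(G)\ge\tfrac12$ and the interval $(p_c^{site}(G),1-p_c^{site}(G))$ is empty (so the claim is vacuous), or $p_c^{site}(G)<\tfrac12$, in which case all hypotheses of Theorem~\ref{t68} hold with this choice of $p_0$, and its conclusion is precisely the second bullet.

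The main obstacle I anticipate is the very first step: establishing the linear vertex-isoperimetric inequality (and hence $\mathbf{i}_V(G)>0$) under only the ``no infinite faces'' hypothesis, which is strictly weaker than a uniform face-degree bound. The discharging bookkeeping must absorb faces of arbitrarily large but finite degree, using the uniform vertex-degree bound and one-endedness to control the geometry of large finite subgraphs. Once that structural input is in place, the remainder of the argument is a clean two-step concatenation of Theorems~\ref{l613} and~\ref{t68}, with the only residual subtlety being the vacuity case analysis handling $\mathbf{i}_V(G)\le1$ and $p_c^{site}(G)\ge\tfrac12$.
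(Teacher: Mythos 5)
Your overall two-step structure — feed Theorem~\ref{l613} (for the first bullet) or Lemma~\ref{le68}/semi-transitivity (for the second bullet) into Theorem~\ref{t68}, together with the vacuity case analysis when $\mathbf{i}_V(G)\le 1$ or $p_c^{site}(G)\ge\frac12$, and the observation that uniform percolation at every $p>p_0$ forces $p_c^{site}(G)\le p_0$ — matches what the paper does (the paper routes the semi-transitive case through Corollary~\ref{c69}, which is precisely Theorem~\ref{t68} plus Lemma~\ref{le68}). However, there is one genuine gap, which the paper explicitly flags as the point of the theorem: Theorem~\ref{t68} requires that $G$ \emph{be properly embedded} in $\RR^2$ (with the stated local degree conditions holding in that embedding), whereas Theorem~\ref{t613} only assumes $G$ is ``planar'' together with one-endedness, bounded vertex degree, and absence of infinite faces. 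You assert that ``the hypotheses of Theorem~\ref{t68} are therefore met'' without ever checking proper embeddability, i.e.\ that the planar drawing can be taken without accumulation points in the sense of Definition~\ref{df11}. The paper's proof supplies exactly this missing step: it first triangulates $G$ by placing a vertex inside each (finite) face and joining it to the face boundary, so that the resulting one-ended, locally finite triangulation is properly embeddable by Lemma~4.1 of \cite{Nach}, and restricting that embedding back to $G$ yields the required proper embedding. Without this, Theorem~\ref{t68} simply cannot be invoked.

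A secondary remark: your self-identified ``main obstacle'' — a Gauss--Bonnet/discharging argument to prove $\mathbf{i}_V(G)>0$ under only the ``no infinite faces'' hypothesis — is unnecessary work. For the first bullet, if $\mathbf{i}_V(G)\le 1$ (which includes the amenable case $\mathbf{i}_V(G)=0$) the stated interval is already empty, as you yourself note; and for the second bullet you never use $\mathbf{i}_V(G)$ at all, since uniform percolation comes from semi-transitivity (Lemma~\ref{le68}), not from Theorem~\ref{l613}. So no positive lower bound on $\mathbf{i}_V(G)$ needs to be established. The actual obstacle you should have flagged is the proper-embeddability gap above.
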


Theorem \ref{t613} seems to be a direct corollary of Theorems \ref{t68}, \ref{l613} and the fact that semi-transitive graphs satisfy the ``uniform percolation" assumption; however, note that Theorem \ref{t613} does not assume that the graph is properly embedded into the plane. Indeed, one can prove that one-ended, planar graph of uniformly bounded vertex degree and no infinite faces can always be properly embedded into the plane; see, e.g. Lemma 4.1 of \cite{Nach}.

Matching graphs were introduced by Sykes and Essam \cite{SE64}
and explored further by Kesten \cite{K82}. Matching pair of graphs are site-percolation analogs of planar duality in bond-percolation, and play an essential role in the proof the main result of the paper.

\begin{definition}\label{df64}Let $G=(V,E)$ be an infinite, connected, locally finite, simple, planar graph.   Fix an embedding to $G$ into the plane. The \textbf{matching graph} $G_*=(V,E_*)$ is the graph whose vertex set is the same as that of $G$; and for $u,v\in V$ and $u\neq v$, $(u,v)\in E_*$ if and only if $u$ and $v$ share a finite face in $G$.
\end{definition}

\begin{definition}The number of ends of a connected graph is the supreme over its finite subgraphs of the number of infinite components that remain after removing the subgraph.
\end{definition}

The graphs considered in this paper may have more than one end; in other words, we allow infinite faces in our graphs. In Definition \ref{df64}, $E_*\setminus E$ contains only edges joining two vertices sharing finite faces; so that $G_*$ is locally finite. If the proper embedding of graph $G$ into $\RR^2$ has only finite faces, then Definition \ref{df64} coincides with the usual definition of matching graphs. It is known from Lemma 4.1 of \cite{Nach} that any one-ended, locally finite planar graph in which every face is finite can be properly embedded into the plane.

\begin{theorem}\label{mt1}
Let $G$ be a one-ended, semi-transitive graph properly embedded in $\RR^2$  with minimal vertex degree at least 7. For i.i.d.~Bernoulli site percolation on $G$ we have
\begin{align*}
p_c^{site}(G_*)+p_u^{site}(G)=1.
\end{align*}
\end{theorem}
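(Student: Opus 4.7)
The plan is to establish $p_c^{site}(G_*)+p_u^{site}(G)=1$ by proving the two inequalities separately, both via the planar matching-graph duality. Throughout I use the identity that the 0-vertices of a Bernoulli$(p)$ site percolation on $G$, viewed on the common vertex set, have the law of Bernoulli$(1-p)$ site percolation on $G_*$; by Definition \ref{df64}, being adjacent in $G_*$ means either being adjacent in $G$ or sharing a finite face of the planar embedding, which is exactly the right notion for the standard Sykes--Essam-type arguments.

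For the upper bound $p_u^{site}(G)\leq 1-p_c^{site}(G_*)$, I would fix $p>1-p_c^{site}(G_*)$, so that $1-p<p_c^{site}(G_*)$ and a.s.\ every 0-cluster in $G_*$ is finite. The classical planar Sykes--Essam argument, adapted to our proper embedding in $\RR^2$, says that a finite 0-cluster in $G_*$ is enclosed by a 1-circuit of $G$; equivalently, two distinct infinite 1-clusters of $G$ must be separated by an infinite connected set of 0-vertices of $G_*$. Finiteness of every 0-cluster in $G_*$ therefore forces uniqueness of the infinite 1-cluster in $G$. Existence of at least one infinite 1-cluster at such $p$ follows from the chain $p>1-p_c^{site}(G_*)\ge 1-p_c^{site}(G)>p_c^{site}(G)$, where the first inequality uses $p_c^{site}(G_*)\le p_c^{site}(G)$ (more edges in $G_*$) and the last uses $p_c^{site}(G)<1/2$, a consequence of Theorem \ref{t68} applied together with the uniform percolation provided by semi-transitivity. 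This gives $p\ge p_u^{site}(G)$ and hence the upper bound.

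For the lower bound $p_u^{site}(G)\ge 1-p_c^{site}(G_*)$, I would pick $p<1-p_c^{site}(G_*)$, so $1-p>p_c^{site}(G_*)$ and a.s.\ there is an infinite 0-cluster $D$ in $G_*$; the goal is to produce two distinct infinite 1-clusters of $G$. From $D$, using one-endedness of $G$, the proper embedding, and the hyperbolic-type geometry from minimal vertex degree $\ge 7$, I would extract a bi-infinite simple path in $G_*$ lying inside $D$ (possibly using face-sharing edges of $G_*$) that separates $\RR^2$ into two unbounded components, each containing infinitely many vertices of $G$. Combined with the uniform percolation guaranteed by semi-transitivity (and the resulting stability of infinite 1-clusters in the spirit of \cite{RS98} and Theorem \ref{l613}), each side of this separator carries its own infinite 1-cluster of $G$. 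Since these two 1-clusters lie on opposite sides of a 0-separator in $G_*$, they cannot be identified, giving non-uniqueness and hence $p\le p_u^{site}(G)$.

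The main obstacle is the lower bound. Producing a planar separator from an infinite $G_*$-cluster of 0's is subtler than in bond percolation, because $G_*$ contains face-sharing edges that are not in $G$, so the embedding of a $G_*$-cluster is not a sub-drawing of the embedding of $G$; a careful topological argument (using the proper embedding, one end of $G$, and minimal degree $\ge 7$) is required to guarantee that the separator splits $\RR^2$ into distinct unbounded components. The remaining step---showing both sides carry an infinite 1-cluster---then uses semi-transitivity and uniform percolation, together with the high minimum degree, to port the arguments behind Theorems \ref{t68} and \ref{t613} to each side. The upper bound, by contrast, is essentially the classical Sykes--Essam duality executed with the matching-graph definition.
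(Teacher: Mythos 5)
Your upper bound, $p_u^{site}(G)\le 1-p_c^{site}(G_*)$, is essentially the paper's argument (Lemma \ref{l114}, Part (B)): for $p>1-p_c^{site}(G_*)$ all 0-$*$-clusters are a.s.\ finite, and two distinct infinite 1-clusters in a one-ended properly embedded planar graph would force an infinite 0-$*$-cluster between them. That half is fine.

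The lower bound is where there is a genuine gap, and it is the hard half. The paper does \emph{not} extract a separator from the supercritical 0-$*$-phase. Instead it establishes $p_u^{site}(G)\ge 1-p_c^{site}(G_*)$ by covering the interval $[0,1-p_c^{site}(G_*))$ in three pieces (Lemma \ref{l113}(2)): below $p_c^{site}(G)$ there is no infinite 1-cluster; at $p_c^{site}(G)$ uniqueness fails by Lemma \ref{lma83}, which uses semi-transitivity together with the exponential decay of the two-point function (Lemma \ref{l96}) and the Harris-type bound of Lemma \ref{la68}; and on $(p_c^{site}(G),1-p_c^{site}(G_*))$ one invokes Lemma \ref{lt1} from the companion paper, which produces infinitely many infinite 1-clusters via the embedded trees of Lemma \ref{l59} and the decay estimate. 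Your proposed substitute has two concrete problems. First, an infinite 0-$*$-cluster need not contain a bi-infinite simple path: a locally finite infinite connected graph always contains a ray but not necessarily a double ray, and a one-way infinite path does not disconnect the plane, so the claimed separation of $\RR^2$ into two unbounded components is not available from the mere existence of an infinite 0-$*$-cluster. Second, even granting such a separator, the step ``each side carries its own infinite 1-cluster'' does not follow from uniform percolation as stated: Definition \ref{df61} is an unconditional statement about balls, whereas your separator is a random object measurable with respect to the same configuration, so conditioning on it biases the law of the 1's on each side. Overcoming exactly this is the content of the independence-across-disjoint-subtrees constructions in Lemmas \ref{l65} and \ref{l66} (and of Lemma \ref{lt1}); without some version of that machinery, or of the two-point-function decay argument of Lemma \ref{lma83}, the lower bound is not proved.
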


Let $n\geq 1$. In \cite{BSte}, it is proved that every graph with a vertex isoperimetric constant at least $n$ contains a tree with vertex isoperimetric constant constant exactly $n$. With the help of this result, we obtain the following theorem

\begin{theorem}\label{l62}
\begin{enumerate}
\item Let $n\geq 2$ be a positive integer. Let $G$ be locally finite planar graph with vertex isoperimetric constant $\mathbf{i}_V(G)\geq n$.    Then for any $p\in\left(\frac{1}{n+1},\frac{n}{n+1}\right)$ a.s. there are infinitely many  infinite open clusters in the i.i.d.~Bernoulli$\left(p\right)$ site percolation on $G$.
\item Let $G$ be a planar graph with vertex isoperimetric constant $\mathbf{i}_V(G)> 1$, uniformly bounded vertex degree and uniformly bounded face degree.   Then  a.s. there are infinitely many  infinite open clusters in the i.i.d.~Bernoulli$\left(\frac{1}{2}\right)$ site percolation on $G$.
\end{enumerate}
\end{theorem}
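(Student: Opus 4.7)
The plan is to combine the tree-extraction result of \cite{BSte} with Theorem \ref{l613} to establish coexistence of infinite $1$- and $0$-clusters, and then invoke a planar topological argument for non-uniqueness.

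For part (1), I would first apply \cite{BSte} to extract a subtree $T\subseteq G$ with $\mathbf{i}_V(T)=n$. Theorem \ref{l613} applied to $T$ yields uniform site percolation on $T$ at every $p\in\left(\frac{1}{n+1},1\right)$, so $p_c^{site}(T)\le\frac{1}{n+1}$; since on any tree one has $p_u^{site}=1$ (two independent infinite rays in the tree already force at least two disjoint infinite open clusters with positive probability for $p<1$), the Newman--Schulman $0$--$1$ law forces infinitely many infinite open clusters on $T$ throughout this range. Because $T\subseteq G$, each such cluster lies inside an infinite $1$-cluster of $G$, so infinite $1$-clusters exist in $G$ for every $p>\frac{1}{n+1}$. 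Applying the same reasoning to the $0$-vertices, i.e. Bernoulli$(1-p)$ site percolation on $T$, gives infinite $0$-clusters in $G$ for every $p<\frac{n}{n+1}$. Hence for $p\in\left(\frac{1}{n+1},\frac{n}{n+1}\right)$, infinite $1$-clusters and infinite $0$-clusters coexist a.s.\ in $G$.

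The principal obstacle is to upgrade coexistence on $G$ to the statement that the number of infinite $1$-clusters is in fact infinite, without the benefit of quasi-transitivity or the mass-transport principle. Here I would exploit the planar embedding of $G$ together with the structure on $T$: the infinitely many infinite $1$-clusters of $T$ and the infinitely many infinite $0$-clusters of $T$ together produce infinitely many disjoint unbounded subsets of both colors inside $G$, and by planarity two unbounded $1$-subsets separated in $T$ by an unbounded $0$-subset cannot merge into a single $1$-cluster of $G$. A Burton--Keane/Zhang-style local modification argument -- with the uniform percolation of Theorem \ref{l613} playing the role of insertion-tolerance, compensating for the absence of quasi-transitivity -- then shows that these separations survive in $G$, so that the number of infinite $1$-clusters in $G$ is a.s.\ infinite.

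For part (2), the assumption $\mathbf{i}_V(G)>1$ gives $\frac{1}{1+\mathbf{i}_V(G)}<\frac{1}{2}$, so Theorem \ref{l613} applied to $G$ yields uniform site percolation at $p=\frac{1}{2}$; combined with the $p\mapsto 1-p$ symmetry of Bernoulli site percolation at $p=\frac{1}{2}$, this gives coexistence of infinite $1$- and $0$-clusters at $p=\frac{1}{2}$. The bounded vertex and bounded face degree hypotheses ensure that the matching graph $G_*$ is locally finite with uniformly bounded degrees, which is precisely what is required to make the planar topological argument of part (1) (or, equivalently, a Zhang-type argument exploiting the $p=\frac{1}{2}$ symmetry) go through, producing infinitely many infinite $1$-clusters at $p=\frac{1}{2}$.
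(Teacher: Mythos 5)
Your high-level plan for part (1) shares the paper's starting point -- extract a tree via \cite{BSte} and try to leverage planarity -- but the crucial step where you ``upgrade coexistence on $G$ to infinitely many infinite $1$-clusters'' is exactly where your argument has a gap, and the machinery you invoke to fill it is not the right one. You assert that ``two unbounded $1$-subsets separated in $T$ by an unbounded $0$-subset cannot merge into a single $1$-cluster of $G$,'' but a closed cluster of the tree $T$ need not separate the plane: a closed cluster in $T$ is a subtree of $T$, and an arbitrary subtree does not, as a planar curve, cut $\RR^2$ into two pieces with your two $1$-subsets on opposite sides. What the paper does is build a \emph{specific} doubly-infinite closed path: it conditions on $w,w0,w00,w1,w10$ being closed and on two closed rays emanating from $w00$ and $w10$ inside $T$, so that the union is a doubly-infinite closed path through $w$ in the embedded tree; then the assumption that $l_{00},l_{01},l_{10},l_{11}$ are in cyclic order around $w$ guarantees this path leaves the subtrees rooted at $w01$ and $w11$ on opposite sides, and planarity forces the open clusters in those subtrees to be distinct in $G$. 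This happens with positive probability for each $w$, independently for disjoint subtrees, so Borel--Cantelli gives infinitely many such separations. Your appeal to ``a Burton--Keane/Zhang-style local modification argument with uniform percolation playing the role of insertion-tolerance'' is a different mechanism: Burton--Keane needs group-invariance to make the counting of trifurcation points work, and uniform percolation by itself is not a substitute for it -- this step would fail or at least requires a substantial new argument you have not supplied. (Also, routing through Theorem~\ref{l613} to get $p_c^{site}(T)\le\tfrac{1}{n+1}$ is an unnecessary detour; for $T_{n+1}$ one has $p_c^{site}=\tfrac{1}{n+1}$ directly.)

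For part (2) your route diverges more sharply from the paper and misses the central difficulty. The tree $T_2$ produced by Lemma~\ref{lte} when $\mathbf{i}_V(G)>1$ has $p_c^{site}(T_2)=\tfrac12$, so Bernoulli$(\tfrac12)$ site percolation on the tree itself does \emph{not} percolate, and the part (1) argument cannot be run on $T$ alone. The paper's resolution is to consider, for each tree vertex $w$, the ``half-plane'' subgraph $G_w$ of $G$ bounded by a doubly-infinite path $l_w$ of $T_w$, and to prove $p_c^{site}(G_w)<\tfrac12$ uniformly in $w$ by an enhancement (Russo-formula) argument; this is where the bounded vertex degree and bounded face degree hypotheses actually enter, and the whole appendix is devoted to it. Once $p_c^{site}(G_w)<\tfrac12$, one can run the separating-path construction of part (1) with percolation inside the $G_w$'s rather than inside subtrees of $T$. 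Your proposal instead uses Theorem~\ref{l613} for uniform percolation at $p=\tfrac12$ plus a ``Zhang-type argument exploiting $p=\tfrac12$ symmetry,'' but coexistence of infinite open and closed clusters does not by itself give non-uniqueness without quasi-transitivity, and you have not shown how the symmetry/duality argument can be carried out without it. The enhancement step producing $p_c^{site}(G_w)<\tfrac12$ is the missing key idea.
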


Instead of a minimal vertex degree assumption, Theorem \ref{l62} assumes a lower bound on the isoperimetric constant and obtain infinitely many infinite open clusters for certain values of $p$. The proof depends on the constructions of embedded trees  on these non-amenable graphs in \cite{BSte} using flows on graphs. The construction applies to non-planar graphs as well; yet it is not immediately clear how to use this construction of embedded trees to prove exponential decays of connectivity, compared to the explicit construction in Lemma \ref{l59}; see \cite{ZL231}.

The organization of the paper is as follows: in Section \ref{sect:up}, we introduce uniform percolation and prove Theorem \ref{t68}. In Section \ref{sect:sc}, we discuss sufficient conditions for uniform percolation and prove Theorem \ref{l613}. In Section \ref{sect:stg}, we discuss semi-transitive graphs and prove Theorem \ref{t613}. In Section \ref{sect:vic} and the appendix, we prove Theorem \ref{l62}.

\subsection{Background and Notation}

All graphs in this paper are assumed simple in the sense that
\begin{itemize}
\item each edge has two distinct endpoints; and
\item there exists at most one edge joining two distinct vertices.
\end{itemize}

Let $G=(V,E)$ be a graph. Once $G$ is suitably embedded in the space $\RR^2$, 
one defines a \emph{face} of $G$ to be a maximal connected subset of
$\RR^2\setminus G$.  Note that faces are open sets, and may be either bounded or unbounded.
 While it may be helpful to think of
a face as being bounded by a cycle of G, the reality can be more complicated in that
faces are not invariably simply connected (if $G$ is disconnected) and their boundaries
are not generally self-avoiding cycles or paths (if $G$ is not 2-connected).

A walk of $G$ is an alternating, finite or infinite  sequence $(\dots,v_0,e_0,v_1,e_1,\dots)$ with 
$e_i=\langle v_i,v_{i+1}\rangle$ for all $i$. Since our graphs are assumed simple, we may refer
to a walk by its vertex-sequence only. A walk is \emph{self-avoiding} if it visits no vertex twice or more, and 
a self-avoiding walk is called a \emph{path}.
A \emph{cycle} $C=(v_0,v_1,\dots,v_n,v_0)$ is a path $(v_0,v_1,\dots,v_n)$ 
such that $v_0\sim v_n$ together with the edge $\langle v_0,v_n\rangle$. 

For $\omega\in\Omega$, we write $u \lra v$ if there exists an open path of $G$ with endpoints $u$ and $v$,
and $x\xleftrightarrow{A} v$ if such a path exists within the set $A \subseteq V$. A similar notation
is used for the existence of infinite open paths. When such open paths exist in the matching graph $G_*$, we
use the relation $\xleftrightarrow{\ast}$.

The boundary of a face is a closed walk; the \emph{degree} of a face $f$, denoted by $|f|$, is the total number of steps of the closed walk representing its boundary. The degree $|f|$ of a face is bounded below by the number of distinct vertices on its boundary and by the number of distinct edges on its boundary; since a vertex or an edge on the boundary may be visited multiple times by the walk. Although in general, the boundary of a face can be more complicated than a cycle, as we shall see in Lemma \ref{lc29}, for graphs under the assumptions of the paper (especially the negative curvature assumption of each vertex), the boundary of each face is a cycle; and the degree of each face is always the number of vertices on its boundary; or the number of edges on its boundary.

\begin{lemma}\label{lc29}(\cite{ZL231})Let $G=(V,E)$ be an infinite, connected, planar graph, properly embedded into $\RR^2$ such that one of the following conditions holds
\begin{enumerate}
\item the minimal vertex degree is at least 7. 
\item the minimal vertex degree is at least 5 and the minimal face degree is at least 4.
\end{enumerate}
Then the boundary of every finite face is a cycle.
\end{lemma}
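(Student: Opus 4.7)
The plan is to argue by contradiction: assume the boundary walk $W = v_0 v_1 \cdots v_m = v_0$ of some finite face $f$ is not a simple cycle. Then either some edge of $G$ is traversed twice by $W$ (Case A) or every edge appears at most once but some vertex $v$ is visited at least twice (Case B).

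In Case A, the twice-traversed edge $e = \langle u, v\rangle$ is a bridge of $G$ bordered on both sides by $f$; since $f$ is finite, one of the two components of $G \setminus \{e\}$, say the $v$-component $H = (V_H, E_H)$, is a finite connected subgraph of $G$ lying in a bounded region of $\RR^2$ and encircled by a closed sub-walk $W'$ of $W$. I would then apply Euler's formula to $H$ together with the vertex- and face-degree lower bounds: the vertex $v$ has $\deg_H(v) = \deg_G(v) - 1$ while all other vertices of $H$ retain their full $G$-degrees, and every bounded face of $H$ is a face of $G$ (so of degree at least $3$ under (1) or $4$ under (2)) while the unbounded face has degree $|W'|$. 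A short calculation combining $|V_H| - |E_H| + |F_H| = 2$ with $2|E_H| \ge 7|V_H|-1$ (resp.\ $5|V_H|-1$) and $3(|F_H|-1) \le 2|E_H| - |W'|$ (resp.\ $4(|F_H|-1)\le 2|E_H| - |W'|$) forces $|V_H|$ to be negative, which is absurd.

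In Case B, I would choose a shortest sub-walk of $W$ forming a simple cycle $C \subset G$ through the pinch vertex $v$, let $D$ be the bounded component of $\RR^2 \setminus C$ on the side opposite $f$, and let $H$ consist of $C$ together with all vertices and edges of $G$ enclosed by $C$. The key local fact at $v$ is that the four $W$-edges $e_1, e_2, e_3, e_4$ at $v$ are distinct and their cyclic order around $v$ is forced by the embedding, placing $f$ in exactly the two corners $\angle(e_1, e_2)$ and $\angle(e_3, e_4)$ while a face of $G$ distinct from $f$ occupies the corner $\angle(e_2, e_3) \subset \overline{D}$ with degree at least $3$ (resp.\ $4$). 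Applying Euler's formula to $H$ using these local degree contributions, combined with the fact that interior vertices of $D$ retain their full $G$-degree, should again produce the desired contradiction.

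The main obstacle is Case B: unlike Case A, a naive Euler count for $H$ yields only an inequality of the form $|V_H| \le 3|C| - 6$, which is consistent rather than contradictory. Closing the argument requires exploiting the strict negative combinatorial curvature $1 - \deg(w)/2 + \sum_{f' \ni w} 1/|f'| < 0$ satisfied at every vertex $w$ under either hypothesis, typically via a discrete Gauss--Bonnet summation over $H$ or equivalently by iteratively peeling off minimal simple sub-cycles of $W$ and invoking the Case A bound at each step. The details are carried out in the companion paper \cite{ZL231}.
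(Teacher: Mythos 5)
There is a genuine gap in Case B, and you have in effect conceded it yourself: the final paragraph of your proposal acknowledges that your Euler count for the region bounded by a simple sub-cycle $C$ of $W$ gives only $|V_H|\le 3|C|-6$, and then defers the actual argument to \cite{ZL231}. (Note that the present paper also only cites \cite{ZL231} for this lemma, so the deferral cannot be discharged from here.) Worse, the two repairs you gesture at do not work as described. The reason Case A closes is that the finite component $H$ hanging off the bridge is attached to the rest of $G$ through a \emph{single} edge, so every vertex of $H$ except one keeps its full $G$-degree $\ge 7$ (resp.\ $\ge 5$), and the degree sum $2|E_H|\ge 7|V_H|-1$ overwhelms Euler's formula. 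For a region $D$ bounded by a sub-cycle $C$ of $W$, the vertices of $C$ may have almost all of their incident edges \emph{outside} $\overline D$, so the analogous degree bound fails and ``invoking the Case A bound at each step'' of a peeling procedure has nothing to invoke. Likewise, the discrete Gauss--Bonnet sum over such an $H$ is not contradictory: the boundary vertices contribute positive curvature (a bare $7$-cycle already satisfies the identity with total curvature $1$), which is exactly the non-contradiction you observed.

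The missing idea is to cut through the \emph{face} rather than along the boundary walk. If $W$ repeats a vertex $v$ but no edge, then $f$ occupies two distinct corners at $v$; since $f$ is open and connected, there is an arc $\gamma\subset f$ joining points in these two corners, and $\gamma\cup\{v\}$ is a Jordan curve meeting $G$ only at $v$. Its bounded side $D$ (bounded because $f$ is a finite face, hence $\gamma$ lies in a compact set) contains at least one edge of $G$ incident to $v$ (otherwise the corner of $G$ at $v$ inside $D$ would lie in the same face as $\gamma$, i.e.\ in $f$, forcing an edge of $W$ to be traversed twice and returning you to Case A). Set $H:=(G\cap \overline D)$; by properness of the embedding $H$ is finite, it is connected, it meets the rest of $G$ only at the cut vertex $v$, every vertex of $H$ other than $v$ retains its full $G$-degree, $\deg_H(v)\ge 1$, and every bounded face of $H$ is a face of $G$. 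Now exactly your Case A computation, with the single exceptional vertex $v$ having degree $\ge 1$ instead of $\deg_G(v)-1$, gives $2\le -|E_H|/21+13/7$ under hypothesis (1) (and the analogous estimate under hypothesis (2)), a contradiction. With this substitution for Case B your proof is complete; as written, it is not.
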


\begin{lemma}\label{l59}(\cite{ZL231})Let $G=(V,E)$ be an infinite, connected, planar graph, properly embedded into $\RR^2$ such that one of the following 2 conditions holds:
\begin{enumerate}
    \item the minimal vertex degree is at least 7; or
    \item the minimal vertex degree is at least 5; and the minimal face degree is at least 4.
\end{enumerate}
Then there exists a tree $T=(V_{T},E_{T})$ embedded into $G$ such that
\begin{itemize}
    \item the root vertex of $T$ has degree 2; all the other vertices of $T$ has degree 3 or 4;
    \item $V_T\subset V$ and $E_{T}\subset{E}$.
\end{itemize}
\end{lemma}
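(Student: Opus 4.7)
The hypotheses of the lemma give a strict negative combinatorial curvature at every vertex: a Gauss--Bonnet count yields curvature at most $1-\tfrac{7}{2}+\tfrac{7}{3}=-\tfrac{1}{6}$ in case (1) and at most $1-\tfrac{5}{2}+\tfrac{5}{4}=-\tfrac{1}{4}$ in case (2). My plan is to construct the desired tree $T$ level by level in a BFS-like fashion, using the planar embedding together with Lemma \ref{lc29} to identify a canonical set of ``descending'' neighbours at every vertex.

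First I would fix a root $v_0\in V$. By Lemma \ref{lc29} the boundary of every finite face is a cycle, so the neighbours of any vertex $v$ come equipped with a well-defined cyclic order $w_0(v),w_1(v),\ldots,w_{d(v)-1}(v)$ induced by the embedding. I would let $v_0$ be the root of $T$ and choose two of its neighbours that do not share a face at $v_0$ as the two children of the root; this is possible since $d(v_0)\geq 7$ (resp.\ $\geq 5$) and finite faces are cycles. Inductively, once $T$ has been built up to level $n$, for each vertex $v$ at level $n$ with tree-parent $u$ I would identify the two ``corner'' neighbours $u',u''$ of $v$ lying on the two faces of $G$ that contain the edge $\langle u,v\rangle$, forbid $\{u,u',u''\}$ from being children of $v$, and pick $2$ or $3$ of the remaining neighbours as children of $v$ by a canonical rule based on the cyclic order (e.g.\ the ones closest to the arc opposite $u$). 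This leaves at least $d(v)-3\geq 4$ candidates in case (1) and at least $d(v)-3\geq 2$ candidates in case (2), so the selection is always feasible.

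The key step is to verify that the resulting subgraph is actually a tree, i.e.\ that no vertex is chosen as a child twice (either by two different level-$n$ parents, or after having already been placed at a lower level). For this I would run a discrete Gauss--Bonnet argument: a hypothetical duplication produces a finite simply connected planar region $R\subset\RR^2$ bounded by a cycle composed of edges of $T$ together with one or two extra edges of $G$. Applying the combinatorial Gauss--Bonnet formula to $R$, the strict negative vertex-curvature bound forces $R$ to contain interior vertices whose cyclic-order positions are incompatible with the corner exclusions made in the construction step, producing a contradiction.

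The hard part is precisely this disjointness step: the negative-curvature geometry has to prevent sibling branches of $T$ from colliding or crossing. In case (2), where vertices have degree only $\geq 5$, the face-degree $\geq 4$ hypothesis is what rescues the argument -- triangular faces would otherwise allow immediate collisions between children of adjacent tree-parents. Once disjointness is proved, the prescribed degrees in $T$ follow by construction: the root has exactly its two selected neighbours, and every other vertex of $T$ has its unique parent plus its $2$ or $3$ selected children, so its degree in $T$ is $3$ or $4$; the inclusions $V_T\subset V$ and $E_T\subset E$ hold by fiat.
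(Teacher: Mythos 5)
The paper (via its Appendix B and \cite{ZL231}) builds $T$ by an entirely different mechanism: it constructs specific infinite self-avoiding walks $\pi_b$ through $G$, each of which at every step keeps exactly $3$ incident faces (case (1)) or exactly $2$ incident faces (case (2)) on a prescribed side, and then takes $T$ to be the union of these paths. The controlled turning rule is the engine of the whole argument: it is a combinatorial analogue of following geodesic rays in a space of negative curvature, and it is precisely what lets one verify that distinct branches stay disjoint and that each walk is infinite and self-avoiding. Your proposal replaces this with a BFS-style selection of children plus a Gauss--Bonnet appeal, which is a genuinely different route, but as written it has a real gap.

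The gap is the disjointness step, which you yourself flag as ``the hard part'' and then do not carry out. Two problems. First, the selection rule is underspecified: ``the ones closest to the arc opposite $u$'' does not pin down a unique choice, and without a precise rule there is nothing for a combinatorial Gauss--Bonnet count to bite on. Second, and more seriously, the claimed contradiction is asserted rather than derived. You say that a hypothetical collision would bound a simply connected region $R$ and that negative curvature of interior vertices would be ``incompatible with the corner exclusions'', but that incompatibility is exactly the content of the lemma and needs to be proved, not invoked. A generic BFS with middle-of-the-arc children does not automatically inherit the geodesic-like straightness of the paper's walks; for instance, two branches emanating from adjacent parents can be steered toward each other over several levels without any single vertex violating a local corner exclusion, and ruling this out requires exactly the kind of global turning control that the paper's ``exactly $3$ (resp.~$2$) faces on one side'' rule is designed to supply. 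In case (2), where the vertex degree may be as low as $5$ and you have only $d(v)-3\geq 2$ candidates, there is no slack at all, so the Gauss--Bonnet bookkeeping would have to be done with care to confirm that the face-degree $\geq 4$ hypothesis really prevents sibling collisions; your sketch notes this is where that hypothesis ``rescues the argument'' but again gives no computation. As it stands the proposal is an outline of a plausible alternative strategy rather than a proof, and the step on which everything hinges is missing.
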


See Figure \ref{fig:f21} for an example of a tree embedded into the order 7 triangular tilings of the hyperbolic plane (i.e., a vertex transitive graph $G$ drawn in $\HH^2$ such that each vertex has degree 7 and each face has degree 3) satisfying the conditions of Lemma \ref{l59}. See Appendix \ref{sect:B} for explicit constructions of these trees.

\begin{figure}
\centering
\begin{tikzpicture}
\draw[gray, thick] (0,0) -- (1,0);
\draw[gray, thick] (0.6235,-0.7818)--(0,0) -- (0.6235,0.7818);
\draw[red, thick] (0.6235,-0.7818)--(0,0);
\draw[gray, thick] (1,0) -- (0.6235,0.7818)--(-0.2225,0.9749)--(-0.9010,0.4339)--(-0.9010,-0.4339)--(-0.2225,-0.9749)--(0.6235,-0.7818)--(1,0);
\draw[red, thick]  (0.6235,0.7818)--(-0.2225,0.9749);

\draw[red, thick] (-0.9010,0.4339)--(-0.9010,-0.4339);
\draw[red, thick] (-0.2225,-0.9749)--(0,0);
\draw[red, thick]  (0,0)--(-0.2225,0.9749);
\draw[gray, thick] (-0.9010,-0.4339)--(0,0) -- (-0.9010,0.4339);
\draw[gray, thick] (1,0) -- (1.1940,0.5972);
\draw[red, thick] (1.1940,0.5972)
--(0.6235,0.7818);
\draw[gray, thick] (1,0) -- (1.5657,0.2724)--(1.1940,0.5972);
\draw[gray, thick] (1,0) -- (1.5879,-0.2206)--(1.5657,0.2724);
\draw[gray, thick] (1,0) --(1.2468,-0.5774) --(1.5879,-0.2206);
\draw[gray, thick] (0.6235,-0.7818)--(1.2468,-0.5774)--(1.2020,-1.0911);
\draw[red,thick]
(1.2020,-1.0911)--(0.6235,-0.7818);
\draw[gray, thick] (1.2020,-1.0911)--(0.7997,-1.4136);
\draw[red, thick]
(0.7997,-1.4136)--(0.6235,-0.7818);
\draw[gray, thick] (0.7997,-1.4136)--(0.2886,-1.3458);
\draw[red, thick]
(0.2886,-1.3458)--(0.6235,-0.7818);
 \draw[gray, thick] (0.2886,-1.3458)
--(-0.2225,-0.9749);
\draw[gray, thick] (0.2886,-1.3458)--(-0.1374,-1.6006);
\draw[red,thick]
(-0.1374,-1.6006)--(-0.2225,-0.9749);
\draw[gray, thick] (-0.1374,-1.6006)--
(-0.6160,-1.4688);
\draw[red, thick]
(-0.6160,-1.4688)--(-0.2225,-0.9749);
\draw[gray, thick] (-0.6160,-1.4688)--(-0.8514,-1.0318)--(-0.2225,-0.9749);
\draw[red, thick] (-0.9010,-0.4339)--(-0.8514,-1.0318);
\draw[red, thick]
(-0.9010,-0.4339)--(-1.2945,-0.8848);
\draw[gray, thick]
(-1.2945,-0.8848)--(-0.8514,-1.0318);
\draw[gray, thick] (-1.2945,-0.8848)--(-1.4945,-0.4629);
\draw[red,thick]
(-1.4945,-0.4629)--(-0.9010,-0.4339);
\draw[gray, thick] (-1.4945,-0.4629)--(-1.3278,-0.0268)--(-0.9010,-0.4339);
\draw[red, thick] (-1.3278,-0.0268)--(-0.9010,0.4339);
\draw[gray, thick] (-1.3278,-0.0268)--(-1.5289,0.4240);
\draw[gray,thick]
(-1.5289,0.4240)--(-0.9010,0.4339);
\draw[gray, thick] (-1.5289,0.4240)--(-1.3421,0.8809)--(-0.9010,0.4339);
\draw[gray, thick] (-1.3421,0.8809)--(-0.8827,1.0616);
\draw[red,thick]
(-0.8827,1.0616)--(-0.9010,0.4339);
\draw[red, thick] (-0.8827,1.0616)--(-0.2225,0.9749);
\draw[gray, thick] (-0.8827,1.0616)--(-0.6161,1.5120)--(-0.2225,0.9749);
\draw[gray, thick] (-0.6161,1.5120)--(-0.1062,1.6305)--(-0.2225,0.9749);
\draw[gray, thick] (-0.1062,1.6305)--(0.3317,1.3440);
\draw[red, thick]
(0.3317,1.3440)--(-0.2225,0.9749);
\draw[gray, thick] (0.3317,1.3440)--(0.6235,0.7818);
\draw[red, thick] (0.3317,1.3440)--(0.8282,1.3812);
\draw[gray, thick]
(0.8282,1.3812)--(0.6235,0.7818);
\draw[gray, thick] (0.8282,1.3812)--(1.1982,1.0480);
\draw[red,thick]
(1.1982,1.0480)--(0.6235,0.7818);
\draw[gray, thick] (1.1982,1.0480)--(1.1940,0.5972);
\draw[gray, thick] (-1.5286,0.4240)--(-1.7498,0.2292);
\draw[gray, thick]
(-1.7498,0.2292)--(-1.3278,-0.0268);
\draw[gray, thick] (-1.7498,0.2292)--(-1.8205,-0.0567);
\draw[red,thick]
(-1.8205,-0.0567)--(-1.3278,-0.0268);
\draw[gray, thick] (-1.8205,-0.0567)--(-1.7158,-0.3320);
\draw[red,thick]
(-1.7158,-0.3320)--(-1.3278,-0.0268);
\draw[gray, thick] (-1.7158,-0.3320)--(-1.4945,-0.4629);
\draw[gray, thick] (0.8282,1.3812)--(0.7186,1.6573);
\draw[red, thick]
(0.7186,1.6573)--(0.3317,1.3440);
\draw[gray, thick] (0.7186,1.6573)--(0.4713,1.8219);
\draw[red, thick]
(0.4713,1.8219)--(0.3317,1.3440);
\draw[gray, thick] (0.4713,1.8219)--(0.1743,1.8164)--(0.3317,1.3440);
\draw[gray, thick] (0.1743,1.8164)--(-0.1062,1.6305);
\end{tikzpicture}
\caption{Tree embedding in a order-7 triangular tiling of the hyperbolic plane: the order-7 triangular tiling is represented by black lines, and the embedded tree is represented by red lines.}\label{fig:f21}
\end{figure}
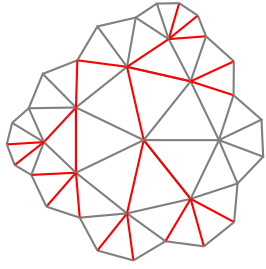

\begin{lemma}\label{la68}(\cite{ZL231})Let $G=(V,E)$ be an infinite, connected, locally finite graph. Let $\mathcal{A}_1$ be the event that there exists a unique infinite 1-cluster. Then for any $u,v\in V$
\begin{align*}
\mathbb{P}_p(u\leftrightarrow v)\geq \mathbb{P}_p(u\leftrightarrow\infty)\mathbb{P}_p(v\leftrightarrow\infty)\mathbb{P}_p(\mathcal{A}_1)
\end{align*}
\end{lemma}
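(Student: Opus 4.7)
The plan is to reduce to a zero-one alternative for the event $\mathcal{A}_1$ and then apply the Harris--FKG inequality to the two remaining increasing events.

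First I would observe that $\mathcal{A}_1$ is invariant under modification of finitely many vertex states: changing the state of a single vertex $w$ can merge or split clusters only within a bounded neighborhood of $w$, so the number of infinite $1$-clusters cannot change. Hence $\mathcal{A}_1$ is a tail event for the product Bernoulli measure $\mathbb{P}_p$, and Kolmogorov's $0$--$1$ law forces $\mathbb{P}_p(\mathcal{A}_1)\in\{0,1\}$.

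If $\mathbb{P}_p(\mathcal{A}_1)=0$ the right-hand side vanishes and the inequality is trivial, so I would proceed to the case $\mathbb{P}_p(\mathcal{A}_1)=1$. Here, on the event $\{u\leftrightarrow\infty\}\cap\{v\leftrightarrow\infty\}$ the vertices $u$ and $v$ each lie in an infinite $1$-cluster; since a.s. there is only one such cluster, they must in fact lie in the same cluster, giving
\begin{equation*}
\{u\leftrightarrow\infty\}\cap\{v\leftrightarrow\infty\}\subseteq\{u\leftrightarrow v\}\q\mathbb{P}_p\text{-a.s.}
\end{equation*}
The two events $\{u\leftrightarrow\infty\}$ and $\{v\leftrightarrow\infty\}$ are increasing in the site configuration $\omega\in\{0,1\}^V$, so the Harris--FKG inequality for the product measure $\mathbb{P}_p$ yields
\begin{equation*}
\mathbb{P}_p(u\leftrightarrow\infty,\,v\leftrightarrow\infty)\;\geq\;\mathbb{P}_p(u\leftrightarrow\infty)\,\mathbb{P}_p(v\leftrightarrow\infty).
\end{equation*}
Combining the two displays and using $\mathbb{P}_p(\mathcal{A}_1)=1$ gives the claim in this case.

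There is no serious obstacle: the only subtlety is that $\mathcal{A}_1$ is not itself monotone in $\omega$ (adding open vertices can, in principle, create new infinite clusters disjoint from the existing unique one), which prevents a direct three-event FKG application. The tail-event dichotomy circumvents this difficulty cleanly, reducing the problem to the two-event FKG step above.
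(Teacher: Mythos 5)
Your first step contains a genuine error: the event $\mathcal{A}_1$ (exactly one infinite $1$-cluster) is \emph{not} a tail event, and the Kolmogorov $0$--$1$ dichotomy you rely on does not hold. Changing the state of a single vertex $w$ certainly acts locally, but it can nevertheless change the \emph{number} of infinite clusters: opening a closed $w$ that is adjacent to two disjoint infinite $1$-clusters merges them into one, and closing an open cut-vertex of an infinite cluster can split it into several infinite pieces. So the assertion ``the number of infinite $1$-clusters cannot change'' is false. (Contrast this with $\{\text{at least one infinite cluster}\}$, which really is tail, since a finite modification of an infinite set remains infinite.)

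Consequently, $\mathbb{P}_p(\mathcal{A}_1)\in\{0,1\}$ fails for general infinite, connected, locally finite graphs. For a concrete counterexample, take $G$ to be two disjoint copies of $\mathbb{Z}^2$ joined by a single edge $\langle a,b\rangle$, with $a$ in the first copy and $b$ in the second, and take $p$ just above $p_c^{site}(\mathbb{Z}^2)$. Each copy has a unique infinite cluster almost surely, and the whole graph has a \emph{unique} infinite cluster if and only if both $a$ and $b$ lie in their respective infinite clusters. That event has probability $\theta(p)^2\in(0,1)$, so $\mathbb{P}_p(\mathcal{A}_1)\in(0,1)$. Your closing remark that ``there is no serious obstacle'' is therefore mistaken: handling the regime $0<\mathbb{P}_p(\mathcal{A}_1)<1$ is exactly the serious obstacle, and it cannot be dismissed via a tail-event dichotomy. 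The Harris--FKG step you give is fine once $\mathbb{P}_p(\mathcal{A}_1)=1$ is granted, but the reduction to that case is unjustified. A correct argument must establish something like $\mathbb{P}_p\bigl(\{u\leftrightarrow\infty\}\cap\{v\leftrightarrow\infty\}\cap\mathcal{A}_1\bigr)\geq \mathbb{P}_p(u\leftrightarrow\infty)\,\mathbb{P}_p(v\leftrightarrow\infty)\,\mathbb{P}_p(\mathcal{A}_1)$ directly, which is delicate precisely because $\mathcal{A}_1$ is neither increasing nor decreasing in $\omega$.
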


\begin{lemma}\label{l96}(\cite{ZL231})Let $G=(V,E)$ be an infinite, connected graph properly embedded into $\RR^2$ with minimal vertex degree at least 7 . Then for each $p< 1-p_c^{site}(T)$, there exists $c_p>0$, such that for any $u,v\in V$, 
\begin{align}
  \mathbb{P}_p(u\leftrightarrow v) \leq \mathbb{P}_p(u\xleftrightarrow{*} v)e^{-c_pd_{G_*}(u,v)}.\label{cl1}
\end{align}
where $u\leftrightarrow v$ means that $u$ and $v$ are in the same 1-cluster, while $u\xleftrightarrow{*} v$ means that $u$ and $v$ are in the same 1-*-cluster (1-cluster in the graph $G_*$). Moreover, 
\begin{align}
\mathbb{P}_p(\partial_V^*u\leftrightarrow \partial_V^*v) \leq e^{-c_pd_{G_*}(u,v)}.\label{cl2}
\end{align}
where $\partial_V^* u$ ($\partial_V^* v$) consists of all the vertices in $V$ adjacent to $u$ (resp.\ $v$) in $G_*$.
\end{lemma}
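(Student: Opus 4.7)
The plan is to use the embedded binary trees from Lemma \ref{l59} as stochastic barriers: for $p<1-p_c^{site}(T)$, the 0-vertices restricted to any such tree percolate supercritically, so that a 0-subtree realized inside a separating tree will block every 1-path crossing it.

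First, using Lemma \ref{l59} together with the planar embedding of $G$, I would construct, for any $u,v\in V$ with $d_{G_*}(u,v)=n$, a family of pairwise vertex-disjoint trees $T_1,\ldots,T_k$ embedded in $G$, each topologically separating $u$ from $v$ in $\RR^2$, with $k=\Theta(n)$. The hypothesis that the minimal vertex degree is at least $7$ (together with Lemma \ref{lc29}, which ensures that the relevant faces are genuine cycles) provides enough ``room'' to iterate the construction in Lemma \ref{l59} at successive vertices along a shortest $G_*$-path between $u$ and $v$, yielding the desired disjoint trees.

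Second, each $T_i$ has root of degree $2$ and other vertices of degree $3$ or $4$, hence grows exponentially; by standard comparison with Bernoulli site percolation on a regular tree, $p_c^{site}(T_i)$ is bounded above by a common constant $p_c^{site}(T)<1$. For $p<1-p_c^{site}(T)$, the 0-site percolation on $T_i$ is supercritical, and with uniform positive probability $\rho>0$, $T_i$ carries a ``double-ended'' 0-connected subtree that, viewed in the plane, separates $u$ from $v$. The existence of such a 0-barrier rules out the event $\{u\leftrightarrow v\}$ in $G$, since any 1-path from $u$ to $v$ would have to cross it, requiring a 1-vertex on $T_i$ adjacent in $G$ to the barrier, in contradiction with its 0-labeling.

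Third, by disjointness of the $T_i$, the failure events are mutually independent, giving
\[
\mathbb{P}_p(u\leftrightarrow v)\leq (1-\rho)^k\leq e^{-c_p d_{G_*}(u,v)}.
\]
To upgrade this to the refined bound \eqref{cl1}, I would invoke a BK/Reimer disjoint-occurrence argument: the witnessing set for $\{u\xleftrightarrow{*} v\}$ can be chosen disjoint from the tree vertices, since a $*$-connection may use $G_*$-edges crossing faces and thereby bypass tree vertices, which decouples $\{u\xleftrightarrow{*} v\}$ from the blocking events on the $T_i$'s and produces the product form. Inequality \eqref{cl2} follows by repeating the argument with $\{u\leftrightarrow v\}$ replaced by $\{\partial_V^* u\leftrightarrow\partial_V^* v\}$. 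The main obstacle will be verifying that a supercritical 0-cluster on $T_i$ genuinely yields a barrier separating $u$ from $v$ with respect to $G_*$-adjacency (not merely $G$-adjacency); this requires Lemma \ref{lc29} together with the double-ended structure of $T_i$ to convert a ray-to-ray 0-subtree inside $T_i$ into a proper topological Jordan arc in $\RR^2$.
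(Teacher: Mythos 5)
The paper does not actually prove Lemma \ref{l96}; it imports it from the companion work \cite{ZL231}, and the only description it gives of that proof is in the introduction: exponential decay of connectivity is obtained ``by constructing a number of disjoint embedded trees separating two vertices,'' with the number of trees growing linearly in the distance. Your proposal follows exactly this outline, so in strategy you are on the intended route. Two steps, however, are genuinely incomplete as written.

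First, the construction of $k=\Theta(n)$ pairwise disjoint separating trees is asserted rather than carried out. Lemma \ref{l59} produces a single embedded tree rooted at a chosen vertex; it does not by itself yield a family of disjoint trees each of which topologically separates $u$ from $v$, and iterating the construction along a $G_*$-geodesic does not obviously keep the resulting trees disjoint or make each one a separating set. Notably, the paper itself states that in \cite{ZL231} this very step is where the ``uniformly bounded face degree'' hypothesis enters; your proposal claims to get it from minimal vertex degree $7$ alone, with no argument for why the trees can be nested and disjoint.

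Second, the upgrade from $\mathbb{P}_p(u\leftrightarrow v)\leq (1-\rho)^k$ to the refined bound \eqref{cl1} does not go through as described. Your key claim --- that the witnessing set for $\{u\xleftrightarrow{*} v\}$ can be chosen disjoint from the tree vertices --- fails for the natural witness: on the event $\{u\leftrightarrow v\}$ the witness is an open $G$-path, and since each $T_i$ is an embedded subgraph of $G$ separating $u$ from $v$ and edges of the embedding meet only at vertices, that path must contain a vertex of every $T_i$. Moreover, the blocking events $B_i$ are decreasing while $\{u\xleftrightarrow{*} v\}$ is increasing, so neither FKG (which points the wrong way for a pair of increasing events) nor the BK inequality for increasing events applies directly; one would need Reimer's inequality together with an explicit, verified choice of disjoint witnesses for $\{u\xleftrightarrow{*} v\}$ and for the complements $B_i^c$, and you have not exhibited such witnesses. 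As it stands your argument yields only $\mathbb{P}_p(u\leftrightarrow v)\leq e^{-c_p d_{G_*}(u,v)}$, which is weaker than \eqref{cl1} since $\mathbb{P}_p(u\xleftrightarrow{*} v)$ itself may be small. The derivation of \eqref{cl2} by ``repeating the argument'' inherits the same issues.
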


\begin{lemma}\cite{ZL231}\label{lt1}
Let $G$ be an infinite, connected graph properly embedded in $\RR^2$ such that the minimal vertex degree at least 7. Then, for any $p\in  (p_c^{site}(G),1-p_c^{site}(G_*))$, a.s.~there are infinitely many infinite 1-clusters in the i.i.d.~Bernoulli($p$) site percolation on $G$. Moreover, it is the case that $p_c^{site}(G)<\frac{1}{2}$ , so the above interval is
invariably non-empty.
\end{lemma}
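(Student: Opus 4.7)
The plan is to combine the exponential decay of two-point connectivity from Lemma~\ref{l96} with the FKG-type lower bound of Lemma~\ref{la68} to rule out uniqueness of the infinite $1$-cluster, and then to use the embedded tree of Lemma~\ref{l59} as a device producing arbitrarily many disjoint infinite $1$-clusters.

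Fix $p\in(p_c^{site}(G),\,1-p_c^{site}(G_*))$. Since $p>p_c^{site}(G)$, an infinite $1$-cluster exists with positive probability; by insertion tolerance along finite open paths, $\theta(v):=\mathbb{P}_p(v\leftrightarrow\infty)>0$ for every $v\in V$, and with a little care one can arrange that $\theta$ is bounded below by some $\eta>0$ on an infinite set $W\subseteq V$ containing pairs of vertices at arbitrarily large $d_{G_*}$-distance (for instance, on vertices whose local $G_*$-neighborhood realises a fixed pattern). If the uniqueness event $\mathcal{A}_1$ had positive probability, then Lemma~\ref{la68} would give $\mathbb{P}_p(u\leftrightarrow v)\geq\theta(u)\theta(v)\mathbb{P}_p(\mathcal{A}_1)\geq\eta^2\mathbb{P}_p(\mathcal{A}_1)>0$ for all $u,v\in W$, contradicting the exponential decay $\mathbb{P}_p(u\leftrightarrow v)\leq e^{-c_p d_{G_*}(u,v)}$ of Lemma~\ref{l96} as $d_{G_*}(u,v)\to\infty$. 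Hence $\mathbb{P}_p(\mathcal{A}_1)=0$, and with positive probability there are at least two infinite $1$-clusters.

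To promote this to ``infinitely many,'' I would exploit the tree $T\subseteq G$ from Lemma~\ref{l59}. Given $m\in\NN$, one can find inside $T$ a family of $m$ pairwise vertex-disjoint infinite subtrees $T_1,\ldots,T_m$ whose vertex sets are pairwise at $d_{G_*}$-distance at least some large $N$. Since every non-root vertex of $T$ has degree $\geq 3$, each $T_i$ has branching number $\geq 2$, so classical tree percolation together with FKG gives positive joint probability $q>0$ that every $T_i$ carries an infinite $1$-cluster of $G$. Meanwhile Lemma~\ref{l96} bounds the probability that any two of these clusters merge in $G$ by $\binom{m}{2}e^{-c_p N}$, which is $<q/2$ once $N$ is large. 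Hence with probability $\geq q/2$ there are at least $m/2$ distinct infinite $1$-clusters; letting $m\to\infty$ makes the expected number of infinite $1$-clusters infinite, and a final subsequence argument (using the same exponential decay to keep distant sectors essentially independent) upgrades this to the almost-sure statement. For the second assertion $p_c^{site}(G)<\tfrac{1}{2}$, the inclusion $T\subseteq G$ gives $p_c^{site}(G)\leq p_c^{site}(T)\leq\tfrac{1}{2}$, with strict inequality coming from the Euler-formula computations behind Lemma~\ref{lc29}, which force a positive density of degree-$4$ branchings in $T$, so that its branching number strictly exceeds $2$.

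The most delicate step is the upgrade from ``at least two'' to ``infinitely many'': in the quasi-transitive setting this is Burton--Keane's trifurcation argument via the mass-transport principle, neither of which is available here without any invariance assumption on $G$. The role of the embedded tree of Lemma~\ref{l59} is precisely to substitute for invariance by providing a robust combinatorial source of many pairwise distant ``attempts'' at an infinite cluster, which the exponential decay of Lemma~\ref{l96} then keeps pairwise disjoint with high probability.
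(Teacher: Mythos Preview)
Your argument has a genuine gap in the ``promotion to infinitely many'' step. You claim that the probability that the infinite $1$-clusters arising in two distinct subtrees $T_i,T_j$ merge in $G$ is at most $e^{-c_p N}$, appealing to Lemma~\ref{l96}. But Lemma~\ref{l96} bounds $\mathbb P_p(u\leftrightarrow v)$ for a \emph{single pair} of vertices; the event ``the $T_i$-cluster and the $T_j$-cluster merge'' is $\bigcup_{u\in T_i,\,v\in T_j}\{u\leftrightarrow v\}$, an infinite union. A naive union bound gives $\sum_{u,v} e^{-c_p d_{G_*}(u,v)}$, which need not be finite even when every term is at most $e^{-c_p N}$. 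Without a way to reduce the merging event to a single two-point connection (or a finite set of them), the exponential decay does not by itself control merging of whole infinite clusters. A related issue: Lemma~\ref{l96} is stated only for $p<1-p_c^{site}(T)$, which does not a priori cover the full interval up to $1-p_c^{site}(G_*)$.

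The paper's approach (this lemma is quoted from \cite{ZL231}, but the mechanism is visible in the proofs of Lemmas~\ref{l63}, \ref{l65}, \ref{l66}) avoids this difficulty by exploiting \emph{planarity as a separator}, rather than exponential decay as a non-merging estimate. One finds, inside the embedded tree $T$, a vertex $v_s$ together with doubly-infinite $0$-paths (or $0$-$*$-paths) running through certain subtrees of $T$; by planarity these infinite closed paths topologically disconnect the plane, forcing the infinite $1$-clusters that live in the complementary regions to be genuinely distinct in $G$, not merely in $T$. Because the events ``this configuration occurs at $v_s$'' are defined on disjoint subtrees and hence independent with uniformly positive probability, Borel--Cantelli gives infinitely many such separations almost surely. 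This is exactly the substitute for mass transport you allude to in your final paragraph, but the separating role is played by closed infinite paths and planarity, not by the two-point function.

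A smaller gap: in your first step you need $\theta(v)\geq\eta>0$ on an infinite set with unbounded $d_{G_*}$-diameter. Your justification (``vertices whose local $G_*$-neighbourhood realises a fixed pattern'') is not enough in a graph with no invariance assumption; compare Lemma~\ref{lma83}, where precisely this lower bound is obtained \emph{from} semi-transitivity. The self-similarity of the embedded tree $T$ can rescue this step, but you should say so explicitly and restrict to vertices of $T$.
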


\section{Uniform Percolation}\label{sect:up}

In this section, we discuss the existence of infinitely many infinite clusters when $p\in (p_c^{site},1-p_c^{site})$ under the further assumption of ``uniform percolation".

\begin{definition}\label{df61}(\cite{RS98})Let $p\in[0,1]$. On a graph $G=(V,E)$ there is uniform percolation at level $p$ if
\begin{align*}
\lim_{N\rightarrow\infty}\inf_{x\in V}\mathbb{P}(B(x,N)\ \mathrm{intersects\ an\ infinite\ cluster\ in\ i.i.d.~Bernoulli(p)\ site\ percolation})=1
\end{align*}
where $B(x,N)$ is the ball in $G$ centered at $x$ with radius $N$.
\end{definition}

\begin{proposition}\label{p62}(Theorem 3.1 in \cite{RS98})Let $G=
(V,E)$ be an infinite connected graph of uniformly bounded degree.
Let ${U(v)}_{v\in V}$ be i.i.d. random variables, uniformly distributed in $[0, 1]$, so
their joint distribution is a product measure on $[0, 1]^V$. For $p\in [0, 1]$,
denote $V_p:=\{v\in V : U(v)\leq p\}$ and let $G_p = (V_p , E_p )$, where $E_p\subseteq E$ consists of all the edges joining two vertices in $V_p$. Assume that $0 < p_1 < p_2\leq 1$, and that there is uniform percolation at level $p_1$. 
Then almost surely, 
any infinite cluster of $G_{p_2}$ contains at least one infinite cluster of $G_{p_1}$.
\end{proposition}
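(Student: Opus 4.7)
The plan is to exploit the natural monotone coupling $V_{p_1}\subseteq V_{p_2}$ together with a two-stage decomposition of the percolation randomness. Set $q:=(p_2-p_1)/(1-p_1)\in(0,1]$, and write $\eta(v):=\id\{U(v)\le p_1\}$. Conditional on $V_{p_1}=\{v:\eta(v)=1\}$, each vertex of $V\setminus V_{p_1}$ independently lies in $V_{p_2}$ with probability $q$; denote the corresponding second-stage indicator by $\xi(v)$. Since every component of $G_{p_1}$ is contained in some component of $G_{p_2}$, it suffices to show that every infinite $p_2$-cluster meets the (random) union $\mathcal{I}_{p_1}$ of all infinite clusters of $G_{p_1}$; for then the $p_1$-cluster of any such meeting vertex is infinite and sits inside the given $p_2$-cluster.

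I would then argue by contradiction: suppose that with positive probability some infinite $p_2$-cluster $C$ satisfies $C\cap\mathcal{I}_{p_1}=\es$. By countable additivity we may fix a root $o\in V$ with positive probability of lying in such an offending $C$. Explore the $p_2$-cluster of $o$ by breadth-first search, revealing the pair $(\eta(v),\xi(v))$ for each vertex $v$ visited and only $\eta(v)$ for the boundary vertices that block further growth. The cluster being infinite, the exploration produces an infinite sequence $o=v_0,v_1,v_2,\dots$ in $C$. Using uniform boundedness of the vertex degree to bound the explored set at every stage, one extracts a subsequence $(v_{n_k})$ whose $N$-balls $B(v_{n_k},N)$ are pairwise disjoint and lie in the currently unexplored region.

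The uniform percolation hypothesis at level $p_1$ provides, for any $\eps>0$, a radius $N=N(\eps)$ such that $\PP(B(v_{n_k},N)\cap\mathcal{I}_{p_1}\ne\es)>1-\eps$ uniformly in $v_{n_k}$. Conditional on the exploration history, the $(\eta,\xi)$-values on the unexplored region retain their product marginals, so on the event $B(v_{n_k},N)\cap\mathcal{I}_{p_1}\ne\es$ the conditional probability of ``activating'' a $\xi$-path of length at most $N$ in $V\setminus V_{p_1}$ linking $v_{n_k}$ to a vertex of $\mathcal{I}_{p_1}$ is bounded below by $q^{N}(1-\eps)>0$ uniformly in $k$. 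Since these success events are conditionally independent across the disjoint balls, a Borel--Cantelli argument yields almost surely at least one successful connection on the assumed event; this connection places a vertex of $\mathcal{I}_{p_1}$ inside $C$, contradicting $C\cap\mathcal{I}_{p_1}=\es$. The hardest step will be coordinating the BFS exploration with the disjointness of the balls $B(v_{n_k},N)$ so that uniform percolation at $p_1$ can be applied to regions the exploration has not yet disturbed; this is precisely where the uniformly bounded vertex degree assumption becomes indispensable, as it controls the size of the explored set at each stage and ensures that fresh disjoint balls of radius $N$ can always be found further along $C$.
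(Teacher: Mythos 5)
The paper offers no proof of this proposition: it is quoted verbatim as Theorem 3.1 of \cite{RS98} and used as a black box, so the only thing to assess is your argument on its own terms. Your reduction and set-up are fine: since $V_{p_1}\subseteq V_{p_2}$ it does suffice to show that every infinite $p_2$-cluster meets the union $\mathcal{I}_{p_1}$ of the infinite $p_1$-clusters, and the two-stage decomposition with sprinkling parameter $q=(p_2-p_1)/(1-p_1)$ is the right coupling. The gap is at the central step. The event $E_k:=\{B(v_{n_k},N)\cap\mathcal{I}_{p_1}\ne\es\}$ is a \emph{global} event: whether a $p_1$-cluster touching the ball is infinite depends on $\eta$ everywhere, not only inside the ball and not only on the unexplored region. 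The exploration history $\mathcal{F}_k$ reveals a large amount of information that is negatively correlated with this increasing event (it fixes $U(w)>p_2$ on the whole blocking boundary and $U(w)>p_1$ on every sprinkled vertex of the cluster), and in addition you are working on the event that $C$ avoids $\mathcal{I}_{p_1}$, which is itself strong negative information about $\eta$-connectivity near $C$. Hence $\mathbb{P}(E_k\mid\mathcal{F}_k)\ge 1-\eps$ does not follow from $\inf_x\mathbb{P}(B(x,N)\cap\mathcal{I}_{p_1}\ne\es)\ge 1-\eps$; the factor $(1-\eps)$ in your bound $q^N(1-\eps)$ is precisely what needs to be proved, and it is the substance of the theorem. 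For the same reason the events $E_k$ are \emph{not} conditionally independent across the disjoint balls (they all depend on the global $\eta$-configuration), so the Borel--Cantelli step fails twice over.

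Two further points compound this. The balls $B(v_{n_k},N)$ cannot ``lie in the currently unexplored region'': $v_{n_k}$ and its BFS ancestors are explored, and nothing prevents the exploration from having already swept through much of the ball, including blocking vertices with $U>p_2$ (hence $\xi=0$); a path from $v_{n_k}$ to a vertex of $\mathcal{I}_{p_1}$ inside the ball may be forced through such dead vertices, so the activation probability $q^{N}$ is not available either. Also, revealing ``only $\eta(v)$'' on blocking vertices is inconsistent with knowing that they block: blocking means $U(v)>p_2$, which determines $\xi(v)=0$ as well. A correct proof must replace $E_k$ by local approximations and control the interaction between the exploration and the conditioning; as written, the sketch asserts the hard part rather than proving it, so you should either supply that argument or, as the paper does, cite Theorem 3.1 of \cite{RS98}.
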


For $0<p_1<p_2<1$, we call the coupling between i.i.d.~ Bernoulli($p_1$) percolation and i.i.d.~ Bernoulli($p_2$) percolation by the same collection of i.i.d.~uniform random variables on vertices as described in Proposition \ref{p62} standard coupling.

Note that the conclusion of Proposition \ref{p62} with stronger assumptions that $G$ is quasi-transitive and unimodular was proved in \cite{HP}.

\begin{lemma}\label{l63}Let $G=(V,E)$ be an infinite, connected, planar graph of uniformly bounded degree. Assume that $G$ can be properly embedded into $\RR^2$ such that one of the following 2 conditions holds:
\begin{enumerate}
    \item the minimal vertex degree is at least 7; or
    \item the minimal vertex degree is at least 5; and the minimal face degree is at least 4.
\end{enumerate} 
Assume that for every $p\in \left(p_0,\frac{1}{2}\right)$ there is uniform percolation, where $p_0\in \left[p_c^{site},\frac{1}{2}\right)$. Then 
\begin{enumerate}[label=(\alph*)]
\item for all $p\in \left(p_0,\frac{1}{2}\right)$, a.s. there are infinitely many infinite 1-clusters and infinite 0-clusters have infinitely many ends.
\item for all $p\in \left(\frac{1}{2},1-p_0\right)$, a.s. there are infinitely many infinite 0-clusters and infinite 1-clusters have infinitely many ends.
\end{enumerate}
\end{lemma}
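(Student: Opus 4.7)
The plan is to use the Bernoulli $0\leftrightarrow 1$ symmetry to reduce part (b) to part (a): swapping the state of every vertex is a bijection from Bernoulli$(p)$ to Bernoulli$(1-p)$ site percolation on $G$, so the two conclusions of (b) at $p\in(\tfrac12,1-p_0)$ are exactly the two conclusions of (a) at $1-p\in(p_0,\tfrac12)$ (and uniform percolation at $1-p$ is the hypothesis of the lemma). I therefore focus on (a), split into (a1) ``infinitely many infinite $1$-clusters'' and (a2) ``each infinite $0$-cluster has infinitely many ends''.

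For (a1), recall that the a.s.\ constant $N_p$ lies in $\{0,1,\infty\}$ by the Newman--Schulman insertion-tolerance argument, valid on any locally finite graph; and $N_p\geq 1$ because $p>p_0\geq p_c^{site}(G)$. To force $N_p=\infty$, the strategy is to transport ``$N=\infty$'' downward in $p$ via Proposition \ref{p62}. Since $G_*$ contains all edges of $G$, one has $p_c^{site}(G_*)\leq p_c^{site}(G)<\tfrac12$ (the latter recorded in Lemma \ref{lt1}), so the interval $(p,1-p_c^{site}(G_*))$ is non-empty for $p<\tfrac12$ and sits inside $(p_c^{site}(G),1-p_c^{site}(G_*))$. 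Pick any $p'$ in it. Under condition~(1), Lemma \ref{lt1} yields $N_{p'}=\infty$ a.s.; under condition~(2) I would invoke the analogous statement from \cite{ZL231}, whose proof uses the embedded tree of Lemma \ref{l59} and an exponential-decay statement parallel to Lemma \ref{l96}. Apply Proposition \ref{p62} with $p_1=p$ (uniform percolation at $p$ is assumed) and $p_2=p'$: every infinite $p'$-cluster in the standard coupling contains at least one infinite $p$-cluster, so $N_{p'}\leq N_p$, forcing $N_p=\infty$.

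For (a2), combine (a1) with the planarity of $G$. Fix an infinite $0$-cluster $C$ and suppose, for contradiction, that $C$ has only $k<\infty$ ends. Since $G$ is properly embedded in $\RR^2$ and every finite face is bounded by a cycle (Lemma \ref{lc29}), the closure $\overline{C}\subset\RR^2$ has at most $k$ unbounded complementary components. The infinitely many infinite $1$-clusters from (a1) are disjoint from $C$, so by pigeonhole one unbounded component $U$ of $\RR^2\setminus\overline{C}$ contains infinitely many of them. Any two distinct infinite $1$-clusters inside $U$ must be separated in $G$ by $0$-vertices inside $U$, hence $0$-vertices not in $C$, producing infinitely many additional infinite $0$-clusters in $U$; iterating the ``finitely many ends'' hypothesis yields an infinite descent. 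Alternatively, I would argue directly from Lemma \ref{l96}: the exponential decay of $\PP_p(\partial_V^*u\leftrightarrow\partial_V^*v)$ prevents infinitely many disjoint infinite $1$-clusters from accumulating in the single ``slab'' between two ends of $C$.

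The main obstacle I foresee is the topological translation in (a2): passing rigorously from the graph-theoretic statement ``$C$ has $k$ ends'' to the statement ``$\overline{C}\subset\RR^2$ has $k$ unbounded complementary components'' requires care, because the subgraph need not be locally finite as a planar set. Definition \ref{df11} and Lemma \ref{lc29} are the right ingredients, but the matching of ends to complementary components must be set up cleanly so that the pigeonhole step in (a2) is legitimate and so that, inside a single unbounded component $U$, the separation of distinct infinite $1$-clusters really forces the existence of separating $0$-vertices in $U\setminus C$.
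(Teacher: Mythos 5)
Your reduction of (b) to (a) via the $0\leftrightarrow 1$ symmetry, and your argument for the first half of (a) (infinitely many infinite $1$-clusters), are sound and essentially the paper's route: establish non-uniqueness at some $p'\in(p,\tfrac12)$ and pull it down to $p$ with Proposition~\ref{p62}, using the assumed uniform percolation at level $p$. (Under condition (2), where Lemma~\ref{lt1} does not apply, the paper gets the input at $p'$ from the embedded tree of Lemma~\ref{l59}, which covers both degree conditions; your deferral to an ``analogous statement'' is the right instinct but should be replaced by the tree argument.)

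The genuine gap is in your treatment of (a2). The decisive step --- ``any two distinct infinite $1$-clusters inside $U$ must be separated in $G$ by $0$-vertices inside $U$, hence $0$-vertices not in $C$, producing infinitely many additional infinite $0$-clusters'' --- is unjustified and, as stated, false in general. In a planar graph two disjoint infinite $1$-clusters need only be separated by an infinite $0$-cluster of the \emph{matching graph} $G_*$, not of $G$ (this is exactly why the paper introduces $G_*$ and Lemma~\ref{l96}); the separating $0$-vertices may be scattered across faces and connected only by $*$-edges, so no new infinite $0$-cluster of $G$ need appear in $U\setminus C$. Moreover, even granting that step, you never reach a contradiction: the existence of further infinite $0$-clusters in $U$ is perfectly consistent with $C$ having $k$ ends, and the ``infinite descent'' is not closed. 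Your fallback via the exponential decay in Lemma~\ref{l96} is likewise only a hope, not an argument. (The preliminary topological claim that a $k$-ended properly embedded subgraph has at most $k$ unbounded complementary components is also left unproved, but it is secondary.) The paper avoids all of this by a direct construction on the embedded tree $T$: for infinitely many disjoint subtrees rooted at vertices $v_s$, the event that $v_s,v_{s0},v_{s00},v_{s1},v_{s11}$ are closed at level $\tfrac12$, that the subtrees at $v_{s00}$ and $v_{s11}$ percolate in $0$'s, and that the middle subtree at $v_{s01}$ carries an infinite $1$-cluster, has probability bounded below, hence occurs infinitely often by Borel--Cantelli; Proposition~\ref{p62} then transfers this picture to level $q<\tfrac12$ (the $0$-cluster only grows, the $1$-cluster retains an infinite sub-cluster), so an infinite $0$-cluster is exhibited branching around infinitely many disjoint infinite $1$-clusters. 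You should replace your contradiction argument with a construction of this type.
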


\begin{proof}It suffices to prove part (a); part (b) follows from part (a) by symmetry.

It follows directly from Proposition \ref{p62} and Theorem \ref{l59} (a) that for all $p\in \left(p_0,\frac{1}{2}\right)$, a.s. there are infinitely many infinite 1-clusters.

Recall that in the proof of Lemma \ref{l59}, we constructed a tree $T$ which is a subgraph of $G$, and $p_c^{site}(T)<\frac{1}{2}$. For $q\in\left(p_0,\frac{1}{2}\right]$, let $\omega_q\in[0,1]^V$ be a i.i.d.~Bernoulli($q$) site percolation on $G$, and always consider the coupling
given in Proposition \ref{p62} for different $q$'s.
Let $s$ be an arbitrary finite-length binary number,
such that $v_{s}$ has two children in the tree $T$.
Assume that
\begin{itemize}
    \item $\omega_{\frac{1}{2}}(v_s)=0$, $\omega_{\frac{1}{2}}(v_{s0})=0$, $\omega_{\frac{1}{2}}(v_{s00})=0$, $\omega_{\frac{1}{2}}(v_{s1})=0$,
    $\omega_{\frac{1}{2}}(v_{s11})=0$,
    the subtree with root $v_{s00}$ has an infinite 0-cluster including the vertex $v_{s00}$, and the subtree with root $v_{s11}$ has an infinite 0-cluster including the vertex $v_{s11}$. Let $\xi_0$ denote this infinite 0-cluster passing through $v_s$, $v_{s0}$, $v_{s00}$, $v_{s1}$, $v_{s11}$.
    \end{itemize}
By the Borel-Contelli lemma, a.s. the above event occurs for infinitely many $s$, each of which has
two children in $T$ and the subtree rooted with distinct $s$ are disjoint. Let $\eta_1$ be an infinite 1-cluster in the subtree of $T$ rooted vertex $v_{s01}$ in $\omega_{\frac{1}{2}}$, and let $\rho_1\supseteq \eta_1$ be the infinite 1-cluster in $G$ containing $\eta_1$. Then by Proposition \ref{p62} in $\omega_{q}$ of $G$, a.s.~an infinite 1-cluster $\tilde{\rho}_1$ is contained in $\rho_1$, and a.s.~an infinite 0-cluster $\tilde{\xi}_0$ contains $\xi_0$. Note that $\tilde{\xi}_0$ extends to infinity on both the left side and the right side of $\tilde{\rho}_1$, and $\tilde{\rho}_1$ extends to infinity as well. Since this occurs infinitely often with disjoint $\tilde{\rho}_1$'s, we conclude that for all $q\in \left(p_0,\frac{1}{2}\right)$, a.s.~infinite 0-clusters have infinitely many ends.
\end{proof}

\begin{lemma}\label{l65}Let $G=(V,E)$ be an infinite, connected, planar graph of uniformly bounded vertex degree. Assume that $G$ can be properly embedded into $\RR^2$ such that one of the following 2 conditions holds:
\begin{enumerate}
    \item the minimal vertex degree is at least 7; or
    \item the minimal vertex degree is at least 5; and the minimal face degree is at least 4.
\end{enumerate} 
Let $p_0\in \left[p_c^{site}(G_*),\frac{1}{2}\right)$. Let $G_1$ be a graph satisfying $G\subset G_1\subset G_*$. If for every $p\in \left(p_0,\frac{1}{2}\right)$ there is uniform percolation in $G_1$ and $G_1$ has uniformly bounded vertex degree; then 
for every $p\in \left(\frac{1}{2},1- p_0\right)$ in the i.i.d.~Bernoulli($p$) site percolation on $G$, one can find a sequence of events $\{J_i\}_{i\geq 1}$; satisfying all the following conditions
\begin{itemize}
    \item $\{J_i\}_{i\geq 1}$ are mutually independent; and
    \item there exists $c_0>0$ independent $i$ such that $\mathbb{P}(J_i)\geq c_0$ for all $i$; and
    \item there exist a sequence of vertices $\{x_i\}_{i\geq 1}$ and a positive integer $1\leq N_1<\infty$ independent of $i$ such that $B(x_i,N_1)$ are pairwise disjoint; and if $J_i$ occurs, then $[G\setminus B(x_i,N_1)]$ has at least 3 infinite 1-clusters in the i.i.d.~Bernoulli($p$) site percolation.
\end{itemize}
\end{lemma}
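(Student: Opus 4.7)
The plan is to build the events $J_i$ using the embedded tree $T$ from Lemma~\ref{l59} together with planar matching-graph duality. Since $p\in\left(\tfrac{1}{2},1-p_0\right)$ gives $1-p\in\left(p_0,\tfrac{1}{2}\right)$, the uniform percolation hypothesis on $G_1$ at level $1-p$ combined with Lemma~\ref{l63} (applied to $G_1$ with the roles of $0$ and $1$ interchanged) yields that a.s.\ there are infinitely many infinite $0$-clusters in $G_1$ and that infinite $1$-clusters in $G_1$ have infinitely many ends. Also $p>\tfrac{1}{2}>p_c^{site}(G)$ gives an infinite $1$-cluster in $G$ almost surely.

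I would then choose a ``nested spine'' $\{v_{s_i}\}_{i\ge 1}$ of vertices in $T$, arranged so that each $v_{s_i}$ has three designated descendants whose associated subtrees, together with the three planar ``gap regions'' between consecutive subtrees, form a region $R_i\subset V(G)$ and so that $\{R_i\}$ are pairwise disjoint. This can be arranged from the explicit construction of $T$ in Appendix~B: at each $v_{s_i}$ reserve two children to root two infinite $1$-arms, continue the spine to $v_{s_{i+1}}$ through a third child or grandchild while leaving sufficient planar room, and obtain the third $1$-arm from a grandchild. Set $x_i:=v_{s_i}$ and let $N_1$ be a fixed constant depending only on the uniform vertex-degree bound of $G$.

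The event $J_i$ is the conjunction, supported on $R_i$, of the following two subevents: (a) three disjoint infinite $1$-paths in $G$, one contained in each of the three designated subtrees, each starting at a neighbor of $\partial B(x_i,N_1)$; and (b) three disjoint infinite $0$-*-paths in $G_*$, one in each gap region, each starting near $\partial B(x_i,N_1)$ and extending to infinity. Subevent (a) has probability bounded away from $0$ because $p>p_c^{site}(T)<\tfrac{1}{2}$, as established inside the proof of Lemma~\ref{l63}. Subevent (b) has probability bounded away from $0$ by combining the uniform percolation of $G_1$ at level $1-p$, which forces every small neighborhood to meet an infinite $0$-cluster, with the ``infinitely many ends'' property of infinite $0$-clusters in $G_1$ applied separately to each gap via a finite-energy modification. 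Since $\{R_i\}$ are pairwise disjoint, $\{J_i\}$ depend on disjoint vertex sets, hence are mutually independent, and uniformity of the construction along the spine gives a common lower bound $c_0>0$.

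The main obstacle is verifying that the occurrence of $J_i$ forces at least three infinite $1$-clusters in $G\setminus B(x_i,N_1)$. By Lemma~\ref{lc29} the finite face boundaries are cycles, so planar matching-graph duality is clean: the three infinite $0$-*-paths, viewed as disjoint arcs in the proper embedding of $G_*$ in $\mathbb{R}^2$, together with $\partial B(x_i,N_1)$, partition the plane by a Jordan-curve argument into three unbounded components, each containing exactly one of the three infinite $1$-arms. Since no vertex is simultaneously labeled $0$ and $1$, no $1$-path in $G\setminus B(x_i,N_1)$ can cross any of the three $0$-*-barriers, so the three $1$-arms lie in three distinct infinite $1$-clusters of $G\setminus B(x_i,N_1)$. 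Making this topological argument rigorous on the proper embedding of $G$ (as opposed to $G_*$) is the most delicate step, and it is precisely here that the planarity and the minimal vertex-degree assumption enter, via Lemma~\ref{lc29}.
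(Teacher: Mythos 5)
Your proposal follows essentially the same route as the paper's proof: three infinite 1-arms rooted in disjoint subtrees of the embedded tree of Lemma~\ref{l59} (using $p>\frac{1}{2}>p_c^{site}(T)$), separating infinite 0-barriers in the intervening gap regions obtained from uniform percolation in $G_1$ at level $1-p$, a uniform positive lower bound on the probability of the conjunction, independence across $i$ from disjoint subtree regions repeated along the tree, and a planar matching-graph separation argument for the conclusion. The only notable difference is that the paper does not require the 0-barriers to be paths confined to the gaps --- it only asks that a fixed ball $B(y_j,N_0)$ inside each gap meet an infinite 0-$G_1$-cluster, the confinement being automatic once the adjacent 1-arms are present --- which replaces your appeal to a finite-energy modification with a simple union bound conditional on the 1-arm events.
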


\begin{proof}Let $p\in \left(\frac{1}{2},1-p_0\right)$; then there is uniform percolation at level $1-p$ on $G_1$. We shall use 1-$G_1$-cluster (resp.~0-$G_1$-cluster) to denote 1-cluster (resp.~0-cluster) in $G_1$.

Let $N_0$ be such that
\begin{align*}
&\inf_{x\in V}\mathbb{P}(B(x,N_0)\ \mathrm{intersects\ an\ infinite\ 1}-G_1-\mathrm{cluster\ in\ i.i.d.~Bernoulli}(1-p)\ \mathrm{site\ percolation})\\
&\geq 1-\epsilon.
\end{align*}
where $\epsilon>0$ is a small positive number to be determined later.

Recall from the proof of Lemma \ref{l59} that there exists a tree $T$ which is a subgraph of $G$. Let $r,s,t$ be 3 finite-length sequences consisting of $0,\frac{1}{2},1$ obtained as follows.
\begin{itemize}
    \item there exists a finite-length sequence $u$ (to be determined later) consisting of $0,\frac{1}{2},1$ such that 
    \begin{align}
        r=u0;\ s=u10;\ t=u110.\label{ru}
    \end{align}
\end{itemize}

It is straightforward to check that $r,s,t$ can be chosen to satisfy
all the following conditions:
\begin{itemize}
    \item $v_r$, $v_s$,$v_t$ are 3 vertices of $T$; and
    \item the subtrees $T_r,T_s,T_t$ of $T$ rooted at $v_r$, $v_s$, $v_t$ are isomorphic and disjoint; and
    \item each one of $v_r$, $v_s$, $v_t$ has two children in $T$.
\end{itemize}

Let $\omega\in\{0,1\}^V$ be the i.i.d.~Bernoulli($p$) site percolation configuration on $G$. Let $\tilde{\omega}\in\{0,1\}^V$ be the i.i.d.~Bernoulli($\frac{1}{2}$) site percolation configuration on $G$,
such that $(\tilde{\omega},\omega)$ form a standard coupling. More precisely, assign an i.i.d.~uniform random variable $U_z$ in $(0,1)$ to each vertex $z\in V$; then define
\begin{itemize}
\item $\omega(z)=\tilde{\omega}(z)=1$ if $U_z\in \left(0,\frac{1}{2}\right]$; and
\item $\omega(z)=1;\tilde{\omega}(z)=0$ if $U_z\in \left(\frac{1}{2},p\right]$; and
\item $\omega(z)=\tilde{\omega}(z)=0$ if $U_z\in \left(p,1\right]$.
\end{itemize}
Then by Proposition \ref{p62}, each infinite 0-*-cluster in $\tilde{\omega}$ contains at least one infinite 0-*-cluster in $\omega$.

For each $j\in\{r,s,t\}$, note that a.s.~in the subtree $T_{v_{j01}}$ of $T$ rooted at $v_{j01}$, there is an infinite 0-cluster  in $\tilde{\omega}$, and therefore an infinite 0-*-cluster in $\tilde{\omega}$. Let $l_{j,1}$ (resp.\ $l_{j,2}$) be the left (resp.\ right) boundary of $T_{v_{j01}}$. Let $G_j$ be the subgraph of $G$ bounded by $l_{j,1}$ and $l_{j,2}$ containing the tree $T_{v_{j01}}$. Let $y_j$ be a vertex in $G_j$, such that $B(y_j,N_0+1)\subset G_j$. Then  the trees $T_{v_{j00}}$ and $T_{v_{j11}}$ are disjoint from $B(y_j,N_0+1)$.
 Let $E_j$ be the event defined by
\begin{itemize}
    \item $\tilde{\omega}(v_j)=1$, $\tilde{\omega}(v_{j0^i})=\tilde{\omega}(v_{j1^i})=1$, for all $1\leq i\leq 2$. The subtree of $T$ rooted at $v_{j00}$ (resp.\ $v_{j11}$) has an infinite 1-cluster including the vertex $v_{j00}$ (resp.\ $v_{j11}$) in $\tilde{\omega}$, and hence in $\omega$.
    \end{itemize}
 
 Then with probability at least $(1-\epsilon)$, $B(y_j,N_0)$ intersects an infinite 0-$G_1$-cluster of $G$ in $\omega$.

Then there exists a constant $c_1>0$ (independent of $N_0$), such that 
\begin{align*}
    \mathbb{P}(E_j)\geq c_1.
\end{align*}
Let $I:=\{r,s,t\}$.
Note that $\{E_j\}_{j\in I}$ are independent; 
we have
\begin{align*}
    \mathbb{P}(\cap_{j\in I}E_j)\geq (c_1)^3
\end{align*}
Choose $\epsilon=\frac{(c_1)^3}{2}$. For $k\in I$, Let $F_j$ be the event that $B(y_j,N_0)$ intersects an infinite 0-$G_1$-cluster. 

Note that $\{E_j\cap F_j\}_{j\in I}$ are mutually independent, since each $E_j\cap F_j$ depends only on the configurations within the subgraph of $G$ bounded by the left and right boundaries of the tree $T_j$.
Moreover,
\begin{align*}
    \mathbb{P}([\cap_{j\in I}E_j]\cap F_k)\geq P(F_k)-P([\cap_{j\in I}E_j]^c)\geq 1-\epsilon+(c_1)^3-1=\frac{(c_1)^3}{2}
\end{align*}
Hence we have
\begin{align*}
    \mathbb{P}(F_k|[\cap_{j\in I}E_j])\geq 
    \frac{(c_1)^3}{2}
\end{align*}
By the independence of $F_k$'s conditional on $\cap_{j\in I}E_j$, we obtain 
\begin{align*}
    \mathbb{P}(\cap_{k\in I}F_k|[\cap_{j\in I}E_j])\geq 
    \left(\frac{(c_1)^3}{2}\right)^3
\end{align*}
and therefore
\begin{align*}
    \mathbb{P}(\cap_{j\in I}[E_j\cap F_j])=\mathbb{P}(\cap_{k\in I}F_k|[\cap_{j\in I}E_j])\mathbb{P}([\cap_{j\in I}E_j])\geq \left(\frac{(c_1)^3}{2}\right)^3(c_1)^3.
\end{align*}
Recall that $u$ and $r,s,t$ satisfy (\ref{ru}). Let $N_1>0$ be such that 
\begin{align*}
B(u,N_1)\supseteq \cup_{i}B(y_i,N_0).
\end{align*}
Let 
\begin{align*}
    J_1:=\cap_{j\in I}[E_j\cap F_j]
\end{align*}
Let $x_1=u$. It is straightforward to check that if $J_1$ occurs, $[G\setminus B(v,N_1)]$ has at least 3 infinite 1-clusters in i.i.d.~Bernoulli($p$) site percolation.
See Figure \ref{fig:tt5}.

\begin{figure}
    \centering
    \includegraphics[width=1.06\textwidth]{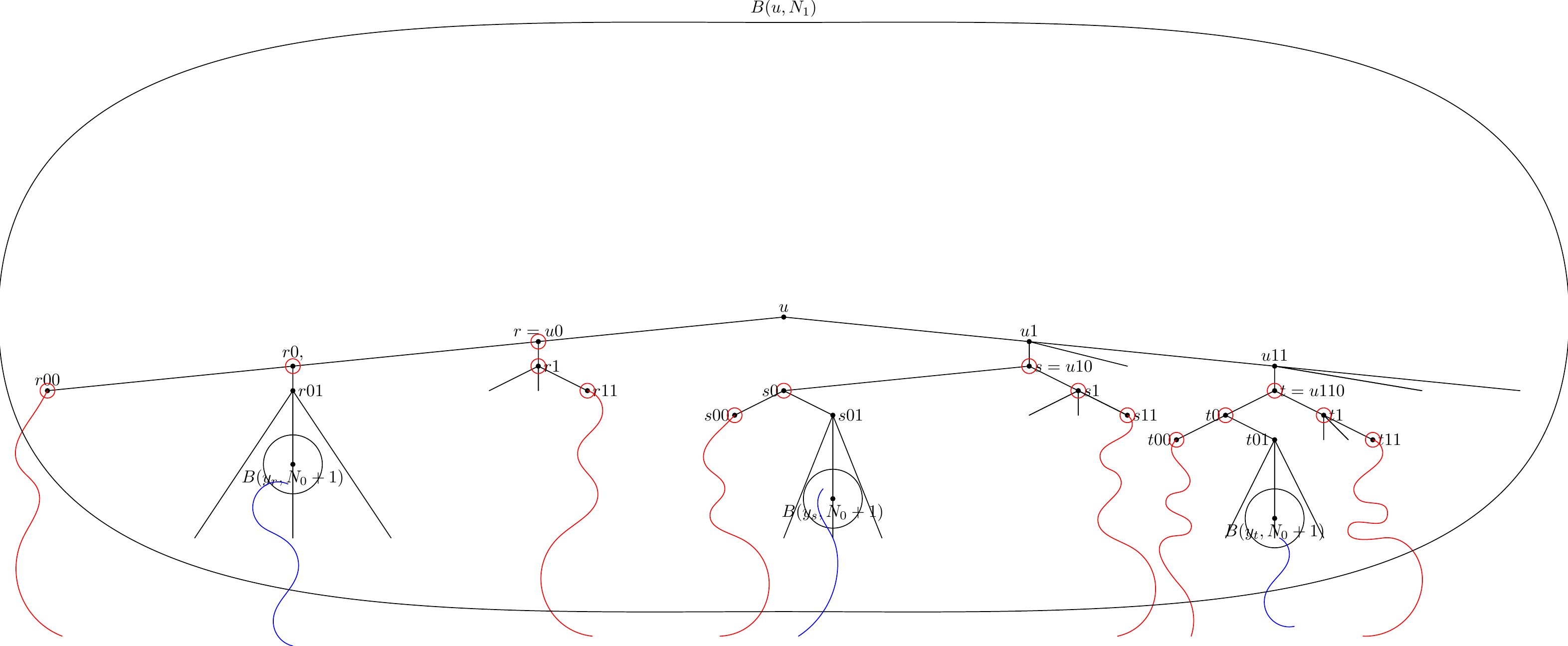}
    \caption{A site configuration in which after removing a finite graph $B(u,N_1)$, at least 3 infinite 1-clusters are remained. State ``1" is represented by red, and state ``0" is represented by blue. The finite subgraph $B(u,N_1)$ is bounded by the biggest oval. }.
    \label{fig:tt5}
\end{figure}
    
Choose $c_0=\left(\frac{(c_1)^3}{2}\right)^3(c_1)^3$. Note that the root $v$ of $T$ can be chosen to be any vertex of $G$. We may choose $u$ such that $B(u,N_1)$ is contained in the region bounded by the left and right boundary of a subtree of $T$ with root $v_{w}$ (we shall also write it as a subtree of $T$ rooted at $w$), where $w$ is a finite length sequence consisting of $0$,
$\frac{1}{2}$, $1$ and $w=f0$. Then consider the subtrees of $T$ rooted at $w10$, $w110$, $w1110$,\ldots; note that they are disjoint. Choose $x_i$ such that $B(x_i,N_1)$ is in the region bounded by the left and right boundary of the subtree of $T$ rooted at $w1^{k-1}0$. With $u$ replaced by $x_i$ and repeat the process above, we can find a sequence of events $\{J_i\}_{i\geq 1}$, such that different $J_i$'s depend only on vertices in disjoint graphs bounded by boundaries of disjoint subtrees. Then the lemma follows.
\end{proof}

\begin{lemma}\label{l66}Let $G=(V,E)$ be an infinite, connected, planar graph of uniformly bounded vertex degree. Assume that $G$ can be properly embedded into $\RR^2$ such that one of the following 2 conditions holds:
\begin{enumerate}
    \item the minimal vertex degree is at least 7; or
    \item the minimal vertex degree is at least 5; and the minimal face degree is at least 4.
\end{enumerate} 
Let $p_0\in \left[p_c^{site}(G_*),\frac{1}{2}\right)$. Let $G\subseteq G_1\subseteq G_*$. If for every $q\in \left(p_0,\frac{1}{2}\right)$ there is uniform percolation in $G_1$ and $G_1$ has uniformly bounded vertex degree, then there are infinitely many infinite 0-clusters in the i.i.d.~Bernoulli($q$) site percolation on $G$.
\end{lemma}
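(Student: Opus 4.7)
The overall strategy is to deduce the lemma from Lemma \ref{l65} by a Borel--Cantelli argument combined with a Burton--Keane-style trifurcation count. For $q\in(p_0,1/2)$ I will set $p:=1-q\in(1/2,1-p_0)$ and use the measure-preserving involution $\omega\leftrightarrow 1-\omega$ that sends Bernoulli($q$) to Bernoulli($p$) and identifies infinite $0$-clusters of the former with infinite $1$-clusters of the latter; hence it suffices to show that for each such $p$, Bernoulli($p$) site percolation on $G$ almost surely has infinitely many infinite $1$-clusters. The hypothesis of Lemma \ref{l66} coincides with that of Lemma \ref{l65}, so Lemma \ref{l65} furnishes mutually independent events $\{J_i\}_{i\geq 1}$ with $\mathbb{P}(J_i)\geq c_0>0$ and pairwise disjoint balls $B(x_i,N_1)$ of a common finite radius, such that on $J_i$ the graph $G\setminus B(x_i,N_1)$ contains at least three infinite $1$-clusters in Bernoulli($p$). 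By the second Borel--Cantelli lemma, almost surely infinitely many $J_i$ occur.

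Next, let $N$ denote the number of infinite $1$-clusters of $G$ in Bernoulli($p$). Since modifying any finite set of coordinates alters $N$ by only a bounded amount, $\{N=\infty\}$ lies in the tail $\sigma$-algebra of the i.i.d.\ product measure, and Kolmogorov's 0-1 law gives $\mathbb{P}(N=\infty)\in\{0,1\}$; it thus suffices to rule out $N<\infty$ a.s. Assuming this, I will choose $k_0$ with $\mathbb{P}(N\leq k_0)>1-c_0/2$. On the event $\{N\leq k_0,\,J_i\}$ the three infinite $1$-sub-clusters supplied by $J_i$ lie in at most $k_0$ infinite $1$-clusters of $G$; equivalently, some pair of them must merge through the ball $B(x_i,N_1)$, which then serves as a generalized trifurcation. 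The explicit three-subtree construction of $J_i$ in the proof of Lemma \ref{l65} (built from Lemma \ref{l59}) places the three sub-clusters inside three disjoint subtree ``doorways'' in the planar embedding, and the balls $B(x_i,N_1)$ themselves are pairwise disjoint. Using the finite-energy property of Bernoulli percolation at the finite ball $B(x_i,N_1)$ together with the genuine independence across distinct $i$, I plan to show that the conditional probability of the inside-the-ball merger at any single $J_i$ is bounded away from $1$ uniformly in $i$; mutual independence over $i$ then makes the probability of simultaneous merger at all but finitely many $J_i$ equal to zero, contradicting $N\leq k_0$ and establishing $\mathbb{P}(N=\infty)=1$.

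The main obstacle is to make this last step rigorous. One must extract a lower bound, uniform in $i$, for the conditional probability that the three infinite sub-clusters furnished by $J_i$ do not all merge through $B(x_i,N_1)$, and verify that, after an appropriate decomposition of $J_i$ into a ``local seed'' on $B(x_i,N_1)$ and a ``remote extension'' through the disjoint subtree doorways, the non-merger events truly depend on disjoint vertex sets and are thus independent. My planned tool is to perturb the local seed using the positive conditional probability of any local state given the exterior (the finite-energy property of the product measure), while preserving the remote extension; a single such perturbation will break a previously-merged pair with probability bounded away from $0$, and chaining across the infinitely many disjoint balls then forces $N=\infty$.
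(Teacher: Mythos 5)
Your proposal takes a Burton--Keane-style trifurcation route, which is not the route the paper follows. The paper's argument is shorter and avoids trifurcation counting altogether: after producing the independent events $J_i$ and disjoint balls $B(x_i,N_2)$ from Lemma \ref{l65}, it forms the modified events $\tilde{J}_i$ by flipping every vertex of $B(x_i,N_2)$ to state $0$, uses insertion/deletion tolerance together with the uniform ball-size bound $L(N_2)<\infty$ (this is where ``uniformly bounded vertex degree'' enters) to get $\mathbb{P}(\tilde J_i)\geq c_1\bigl(\tfrac{1-p}{p}\bigr)^{L(N_2)}>0$, notes that the $\tilde J_i$ are again mutually independent because the $J_i$ depend on disjoint vertex sets and the modification is local, and then applies the second Borel--Cantelli lemma. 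On $\tilde J_i$ the event ``$G\setminus B(x_i,N_2)$ has at least three infinite $1$-clusters'' depends only on the exterior of the ball and so still holds, and the all-closed ball forces those three external sub-clusters to remain distinct in $G$. No zero--one law, no truncation level $k_0$, and no conditional non-merger estimate are invoked.

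Independently of this comparison, your trifurcation step has a concrete gap. You assert that on $\{N\le k_0\}\cap J_i$ ``some pair of them must merge through the ball $B(x_i,N_1)$.'' This implication is simply false as soon as $k_0\ge 3$: the three sub-clusters of $G\setminus B(x_i,N_1)$ supplied by $J_i$ may lie in three \emph{distinct} infinite $1$-clusters of $G$, in which case no merger through (or anywhere else) is needed. As a consequence the endgame of your argument does not close. Your plan produces, via independence and a finite-energy perturbation on the balls, that non-merger happens at infinitely many $i$; but ``non-merger at infinitely many $i$'' is entirely compatible with $N=3$, since the very same three global clusters can serve as the three sub-clusters for every $i$. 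To salvage a trifurcation proof you would need the quantitative Burton--Keane count that compares the number of pivotal balls inside a large region to the size of its boundary --- a step that is both absent from your outline and substantially more delicate here, given that $G$ is merely semi-transitive rather than transitive, so that the usual mass-transport or translation-invariance ingredients are unavailable. The paper's flip-the-ball device is exactly the substitute that sidesteps this difficulty.
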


\begin{proof}Let $p=1-q$. 
As in Lemma \ref{l65}, choose a sequence $\{x_n\}_{n\geq 1}$ of vertices in $G$ satisfying all the following conditions:
\begin{itemize}
\item $B(x_i,N_2)$ are pairwise disjoint; and
\item Let $H_i$ be the event that $G\setminus B(x_i,N_2)$ has 3 infinite clusters in i.i.d.~Bernoulli($p$) site percolation; then there exist mutually independent events $\{J_i\}$ satisfying $J_i\subseteq H_i$, and $\mathbb{P}(J_i)\geq c_1$, where $c_1>0$ is a constant independent of $i$. 
\end{itemize}
Once we find the sequence $\{x_i\}_{i\geq 1}$ satisfying all the above conditions, let $\tilde{J}_i$ be obtained from $J_i$ by making all the sites in $B(x_i,N)$ to be closed; we have
\begin{align*}
\mathbb{P}(\tilde{J}_i)\geq c_1\left(\frac{1-p}{p}\right)^{L(N_2)}>0
\end{align*}
where for each positive integer $1\leq N<\infty$
\begin{eqnarray*}
L(N):=\sup\{x\in V:|B(x_i,N)|\}<\infty.
\end{eqnarray*}
Here $L(N)<\infty$ follows from the fact that the graph $G$ has uniformly bounded vertex degree. Since $B(x_i,N_2)$ are pairwise disjoint, and $J_i$'s are independent; we infer that $\tilde{J}_i$'s are independent. The sum of probabilities of all $\tilde{J}_i$'s are infinite, then the Borel-Contelli lemma implies that $\tilde{J}_i$ occurs infinitely often a.s.; as a result we obtain infinitely many infinite 1-clusters a.s.
\end{proof}


\noindent{\textbf{Proof of Theorem \ref{t68}}.} Since each infinite 1-cluster is contained in an infinite 1-*-cluster, from definition \ref{df61}, it is straightforward to check that if for every $p\in \left(p_0,\frac{1}{2}\right)$ there is uniform percolation in $G$, then for every $p\in \left(p_0,\frac{1}{2}\right)$ there is uniform percolation in $G_*$. Then the theorem follows from Theorem \ref{l59}, Lemma \ref{l63} and Lemma \ref{l66}.
$\hfill\Box$

\section{Sufficient conditions for uniform percolation.}\label{sect:sc}
In this section we discuss the conditions to guarantee ``uniform percolation".

\begin{proposition}\label{l69}Let $G=(V,E)$ be an infinite, connected  graph  that can be properly embedded into $\RR^2$.
Suppose that one of the following 2 conditions holds:
\begin{enumerate}
    \item the minimal vertex degree is at least 7; or
    \item the minimal vertex degree is at least 5; and the minimal face degree is at least 4; or
\end{enumerate}
Let $T$ be the tree embedded into $G$ as a subgraph as constructed in Proposition \ref{l69}. Then there is uniform percolation in $G$ for every $p\in \left(p_c^{site}(T),\frac{1}{2}\right)$.
\end{proposition}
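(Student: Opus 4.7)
The plan is to establish uniform percolation by showing that, for every $\epsilon > 0$, there exists $N = N(\epsilon)$ such that, uniformly in $x \in V$,
\begin{equation*}
\mathbb{P}\bigl(B(x,N) \text{ meets an infinite open cluster}\bigr) \ge 1 - \epsilon.
\end{equation*}
The driving idea is to build, at every vertex $x \in V$, a subtree $T_x$ of $G$ rooted at $x$ whose combinatorial structure matches the tree $T$ of Lemma \ref{l59}, and then to exploit the exponential growth of $T_x$ together with the supercriticality of site percolation on $T_x$ at any level $p > p_c^{\mathrm{site}}(T)$.

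First I would observe that the construction of Lemma \ref{l59} (see Appendix \ref{sect:B}) is purely local: it relies only on the minimal vertex-degree (and possibly face-degree) hypothesis, which holds at every vertex of $G$. Hence, initiating that recipe at any $x \in V$ yields a rooted subtree $T_x \subseteq G$ of the same combinatorial type as $T$---root of degree $2$, every other vertex of degree $3$ or $4$---so that $T_x$ is isomorphic to $T$ as a rooted tree and in particular $p_c^{\mathrm{site}}(T_x) = p_c^{\mathrm{site}}(T)$.

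Next, within $B(x, N)$ I would produce a large family of pairwise vertex-disjoint supercritical subtrees. Let $S_N$ be the set of vertices at depth $N$ in $T_x$ and, for each $v \in S_N$, let $T_x^v$ be the subtree of $T_x$ rooted at $v$; each $T_x^v$ has root branching $2$ or $3$ and every other branching $2$ or $3$, so $p_c^{\mathrm{site}}(T_x^v) \le p_c^{\mathrm{site}}(T)$. For $p \in (p_c^{\mathrm{site}}(T), 1/2)$ there exists $\theta = \theta(p) > 0$, independent of $x$ and $v$, such that the probability that $v$ lies in an infinite open cluster inside $T_x^v$ is at least $\theta$. The subtrees $\{T_x^v : v \in S_N\}$ are pairwise vertex-disjoint, so the corresponding percolation events are mutually independent; moreover $|S_N| \ge 2^N$ and every $v \in S_N$ satisfies $d_G(x,v) \le N$ (since tree distance dominates graph distance), whence $v \in B(x, N)$. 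If $v$ is in an infinite open cluster of $T_x^v$, then it is in an infinite open cluster of $G$ as well, so $B(x,N)$ meets an infinite open cluster. Combining gives
\begin{equation*}
\inf_{x \in V} \mathbb{P}\bigl(B(x, N) \text{ meets an infinite open cluster}\bigr) \ge 1 - (1 - \theta(p))^{2^N},
\end{equation*}
which tends to $1$ as $N \to \infty$, yielding uniform percolation at level $p$.

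The main obstacle is the first step: confirming that the explicit construction of Lemma \ref{l59}/Appendix \ref{sect:B} can indeed be initiated at any vertex and produces a rooted tree whose branching structure matches (or at least dominates) that of the reference tree $T$. If the $T_x$ vary in fine branching structure from point to point, one would instead need to dominate all of them uniformly from below by a fixed rooted tree whose branching is $\ge 2$ everywhere and $\ge 3$ at some vertex, so as to ensure both $p_c^{\mathrm{site}} < 1/2$ and a uniform lower bound on $\theta(p)$ that is independent of $x$.
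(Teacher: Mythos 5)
Your proposal is correct and follows essentially the same route as the paper: both derive the result from supercriticality of the embedded tree of Lemma \ref{l59}, using the independence of percolation on the disjoint subtrees hanging at depth $N$ to push the probability that $B(x,N)$ meets an infinite cluster up to $1$ uniformly. The one point you flag as an obstacle is in fact the step the paper leaves implicit (its stated proof only covers balls centred at vertices of a single fixed $T$); the construction in Appendix \ref{sect:B} does start from an arbitrary $v\in V$ under the purely local degree hypotheses, so re-rooting $T_x$ at every $x$ as you do is exactly the right completion.
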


\begin{proof}Note that the tree $T$ embedded into $G$ is self-similar, hence there is uniform percolation in $T$ for every $p\in \left(p_c^{site}(T),\frac{1}{2}\right)$. Then the lemma follows from the following facts
\begin{enumerate}
\item every ball of radius $N$ in $T$ is contained in a ball of radius $N$ in $G$; and
\item for every $p\in \left(p_c^{site}(T),\frac{1}{2}\right)$, and infinite 1-cluster in $T$ is contained in an infinite 1-cluster in $G$.
\end{enumerate}
\end{proof}

\noindent{\textbf{Proof of Theorem \ref{l613}}.} The proof repeated uses an exploration process constructed in \cite{bs96}, until we exhaust all the vertices in a ball $B(v,M)$.

More precisely, consider the following inductive procedure for constructing the percolation cluster of $v\in V$. If $v$ is closed, set $C_n=\emptyset$ for each $n$. Otherwise set $C_1=\{v\}$ and $W_1=\emptyset$. For each $n\geq 2$, if $\partial_V C_{n-1}\subseteq W_{n-1}$, set $C_m=C_{n-1}$ and $W_m=W_{n-1}$ for all $m\geq n$. Otherwise, choose a vertex $w_n\in \partial_V C_{n-1}\setminus W_{n-1}$. If $w_n$ is open, let $C_n=C_{n-1}\cup \{w_n\}$, and $W_n=W_{n-1}$; otherwise let $C_n=C_{n-1}$, and $W_n=W_{n-1}\cup\{w_n\}$.

Let $C_v=\cup_{n}C_n$., If $C_v$ is nonempty and finite, then there exists a finite $N$, such that $\partial_V C_N=W_N=W_v$.

For each $v\in V$, recall that $B(v,M)$ consists of all the vertices in $G$ whose distance to $v$ is at most $M$. Label all the vertices in $B(v,M)$ by $v_0,v_1,\ldots,v_{|B(v,m)|}$, such that $v=v_0$. Then $B(v,M)$ does not intersect infinite 1-clusters if and only if 
\begin{itemize}
    \item We construct $C_{v_0}$ and $W_{v_0}$, then there exists a finite $N_0$, such that $C_{v_0}\cup W_{v_0}=N_0$ and $\partial_V C_{v_0}=W_{v_0}$
    \item Find the smallest vertex $z_1\in B(v,m)\setminus [C_{v_0}\cup W_{v_0}]$, and construct $C_{z_1}$ and $W_{z_1}$,
    , such that $|C_{z_1}\cup W_{z_1}|<\infty$ and $\partial_V C_{z_1}\subset W_{z_1}\cup W_{v_0}$
    \item repeat the process above until we exhaust all the vertices in $B(v,M)$ and each vertex is in a finite 1-cluster.
    Since $G$ is locally finite (each vertex has finite degree), $B(v,M)$ is finite, there exists a finite positive integer $K$, such that $B(v,M)\subseteq \cup_{k\in [K]}[C_{z_k}\cup W_{z_k}]\cup C_{v_0}\cup W_{v_0}$.
\end{itemize}
Let 
\begin{align*}
    C=C_{v_0}\cup [\cup_{k\in [K]}C_{z)k}];\qquad
    W=W_{v_0}\cup [\cup_{k\in [K]}W_{z)k}].
\end{align*}
Then 
\begin{align*}
    |C\cup W|<\infty;\qquad \partial C=W.
\end{align*}
Let
\begin{align*}
    \infty>L:=|C\cup W|\geq |B(v,M)|\geq M;
\end{align*}
where the last inequality follows from the fact that $G$ is connected.
By the definition of the vertex isoperimetric constant we have
\begin{align*}
    |W|\geq \mathbf{i}_V(G)|C|
\end{align*}
Hence we have 
\begin{align*}
    |W|\geq \frac{\mathbf{i}_V(G)}{\mathbf{i}_V(G)+1}L.
\end{align*}
However $\mathbb{E}|W|=(1-p)L$. By the Hoeffding's inequality, when $p>\frac{1}{\mathbf{i}_V(G)+1}$
\begin{align*}
    \mathbb{P}(|C\cup W|=L; W=\partial C)&\leq \mathbb{P}\left(||W|-E|W||\geq L\left(p-\frac{1}{\mathbf{i}_V(G)+1}\right)\right)\\
    &\leq \exp\left[-\frac{\left(p-\frac{1}{\mathbf{i}_V(G)+1}\right)^2 L}{2}\right]
\end{align*}

Then we have
\begin{align*}
    \mathbb{P}(B(v,M)\nleftrightarrow \infty)&\leq 
\sum_{L\geq |B(v,M)|} \mathbb{P}(|C\cup W|=L; W=\partial C)\\
&\leq \left(1-\exp\left[-\frac{\left(p-\frac{1}{\mathbf{i}_V(G)+1}\right)^2 }{2}\right]\right)^{-1}\exp\left[-\frac{\left(p-\frac{1}{\mathbf{i}_V(G)+1}\right)^2|B(v,M)| }{2}\right]\\
&\leq \left(1-\exp\left[-\frac{\left(p-\frac{1}{\mathbf{i}_V(G)+1}\right)^2 }{2}\right]\right)^{-1}\exp\left[-\frac{\left(p-\frac{1}{\mathbf{i}_V(G)+1}\right)^2M }{2}\right];
\end{align*}
which converges to 0 uniformly in $v$ as $M\rightarrow\infty$. Then the lemma follows from Definition \ref{df61}.
$\hfill\Box$

A refined exploration process in \cite{BNY11} can give larger region of $p$ for uniform site percolation when the graph $G$ is regular, (i.e., each vertex has the same degree) and large girth (i.e. the length of minimal cycle).

\section{Semi-transitive Graphs}\label{sect:stg}

In this section, we discuss semi-transitive graphs and prove theorem \ref{t613}.

\begin{definition}\label{d31}
A graph $G=(V,E)$ is called semi-transitive if there is a finite set $V_F\subset V$ s.t. for any vertex $x\in V$, there is a vertex $y\in V_{F}$ and an injective graph homomorphism of $G$ that maps $y$ to $x$.
\end{definition}

Semi-transitive graphs were defined in Section 2 of \cite{hps98}. It is straightforward to check that semi-transitive graphs include quasi-transitive graphs but not vice versa. 
Note that whether or not percolation occurs is measurable with respect to the tail-$\sigma$-algebra, hence the probability that percolation occurs is either 0 or 1.

\begin{lemma}\label{le68}Let $G=(V,E)$ be an infinite, connected, locally finite, semi-transitive graph. Then for each $p\in (p_c^{site}(G),1)$, there is uniform percolation in the i.i.d.~Bernoulli($p$) site percolation on $G$.
\end{lemma}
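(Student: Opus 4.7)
The plan is to reduce the uniform lower bound required over all $x\in V$ to a uniform lower bound over the finite reference set $V_F$, using the injective graph homomorphisms supplied by semi-transitivity.

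First I would fix any $y\in V$ and observe that, since $G$ is connected and locally finite, every vertex of $G$ lies in $B(y,N)$ for some finite $N$. The events $\{B(y,N)\text{ intersects some infinite 1-cluster}\}$ are increasing in $N$, and their union equals the event that an infinite 1-cluster exists somewhere in $G$. For $p>p_c^{site}(G)$, this event has probability $1$ by the Kolmogorov $0$-$1$ law, so
\[\mathbb{P}\bigl(B(y,N)\text{ intersects an infinite 1-cluster}\bigr)\longrightarrow 1\quad\text{as }N\to\infty.\]
Because $V_F$ is finite, this convergence is immediately uniform over $y\in V_F$: for every $\epsilon>0$ there exists $N_0$ such that the probability above is at least $1-\epsilon$ for all $y\in V_F$ and all $N\geq N_0$.

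Next, for an arbitrary $x\in V$, I would apply the semi-transitivity hypothesis to choose $y\in V_F$ and an injective graph homomorphism $\phi\colon G\to G$ with $\phi(y)=x$. Since $\phi$ preserves adjacency and is vertex-injective, one has $d_G(\phi(u),\phi(v))\leq d_G(u,v)$, hence $\phi(B(y,N))\subseteq B(x,N)$; moreover $\phi$ is a graph isomorphism from $G$ onto the subgraph $\phi(G)=(\phi(V),\phi(E))$ of $G$. Restricting the given i.i.d.~Bernoulli$(p)$ site percolation on $G$ to $\phi(G)$ yields, via the isomorphism $\phi$, a process distributed as i.i.d.~Bernoulli$(p)$ site percolation on $G$ itself, and every infinite 1-cluster in $\phi(G)$ is contained in an infinite 1-cluster of $G$. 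Transporting the event along $\phi$ will therefore give
\[\mathbb{P}\bigl(B(x,N)\text{ intersects an infinite 1-cluster in }G\bigr)\;\geq\;\mathbb{P}\bigl(B(y,N)\text{ intersects an infinite 1-cluster in }G\bigr).\]
Taking the infimum over $x\in V$ on the left and invoking the uniform bound over $y\in V_F$ then produces uniform percolation at level $p$ in the sense of Definition \ref{df61}.

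The only delicate point will be to be explicit that $\phi$ is merely an injective graph homomorphism and not necessarily an automorphism, so $\phi(G)$ may be a proper subgraph of $G$; the argument goes through nonetheless because the event $\{B(\cdot,N)\text{ intersects an infinite 1-cluster}\}$ is monotone under passage to a supergraph (an infinite 1-cluster in a subgraph sits inside an infinite 1-cluster of the ambient graph). Notably, no ergodicity and no mass-transport principle enter the argument, which is consistent with the lack of a transitive group action in the semi-transitive setting.
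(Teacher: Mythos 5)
Your argument is correct, and it is essentially the standard argument that the paper delegates entirely to the citation of Section 2 of \cite{hps98}: reduce the infimum over all of $V$ to a minimum over the finite set $V_F$ via the injective homomorphisms, using that $\phi$ is distance-nonincreasing (so $\phi(B(y,N))\subseteq B(x,N)$) and that infinite $1$-clusters of the subgraph $\phi(G)$ sit inside infinite $1$-clusters of $G$. The only cosmetic point is that the conclusion ``percolation occurs a.s.\ for $p>p_c^{site}(G)$'' follows from the definition of $p_c^{site}$ as an infimum together with monotonicity in $p$ (the $0$--$1$ law alone only gives that the probability is $0$ or $1$), but this does not affect the validity of the proof.
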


\begin{proof}See Section 2 of \cite{hps98}.
\end{proof}

\begin{corollary}\label{c69}Let $G=(V,E)$ be an infinite, connected, planar graph of uniformly bounded vertex degree. Assume that $G$ can be properly embedded into $\RR^2$ such that one of the following 2 conditions holds:
\begin{enumerate}
    \item the minimal vertex degree is at least 7; or
    \item the minimal vertex degree is at least 5; and the minimal face degree is at least 4.
\end{enumerate} 
If $G$ is semi-transitive, then for all $p\in \left(p_c^{site}(G),1-p_c^{site}(G)\right)$, a.s. there are infinitely many infinite 1-clusters and infinitely many infinite 0-clusters.
\end{corollary}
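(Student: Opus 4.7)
The plan is to derive Corollary~\ref{c69} directly from Theorem~\ref{t68}, using the semi-transitivity of $G$ to verify its uniform-percolation hypothesis. Concretely, I would take $p_0 = p_c^{site}(G)$ and check the two requirements of Theorem~\ref{t68}: that $p_0 \in [p_c^{site}(G), \frac{1}{2})$, and that uniform percolation holds in $G$ at every $p \in (p_0, \frac{1}{2})$.

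For the uniform-percolation hypothesis, Lemma~\ref{le68} does the work: on a semi-transitive graph, uniform percolation holds at every $p \in (p_c^{site}(G), 1)$, hence \emph{a fortiori} on the whole interval $(p_c^{site}(G), \frac{1}{2})$, provided this interval is nonempty.

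For the strict inequality $p_c^{site}(G) < \frac{1}{2}$ I would split into the two cases allowed by the hypothesis. Under condition~(1), the strict inequality is precisely the last sentence of Lemma~\ref{lt1}. Under condition~(2), I would use the embedded tree $T \subseteq G$ produced by Lemma~\ref{l59}: since its root has degree $2$ and all other vertices have degree $3$ or $4$, the explicit construction in the Appendix ensures that $T$ has branching number strictly greater than $2$, so that $p_c^{site}(G) \le p_c^{site}(T) < \frac{1}{2}$.

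With both ingredients in hand, Theorem~\ref{t68} applies with $p_0 = p_c^{site}(G)$ and yields, for every $p \in (p_c^{site}(G),\, 1 - p_c^{site}(G))$, that almost surely there are infinitely many infinite $1$-clusters and infinitely many infinite $0$-clusters, which is exactly the conclusion of the corollary. The only step that carries real content beyond bookkeeping is the verification of $p_c^{site}(G) < \frac{1}{2}$ in case~(2); once that is in place, everything else is a clean invocation of Lemma~\ref{le68} followed by Theorem~\ref{t68}.
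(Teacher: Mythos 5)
Your proposal matches the paper's proof, which is the single sentence ``The corollary follows from Theorem~\ref{t68} and Lemma~\ref{le68}'': you invoke the same two ingredients, taking $p_0 = p_c^{site}(G)$ in Theorem~\ref{t68} and using Lemma~\ref{le68} to supply the uniform-percolation hypothesis. The extra care you take to verify $p_c^{site}(G) < \tfrac12$ (via Lemma~\ref{lt1} in case (1) and via the embedded tree of Lemma~\ref{l59} in case (2)) is implicit in the paper's machinery --- the proof of Lemma~\ref{l63} already asserts $p_c^{site}(T) < \tfrac12$ for the embedded tree --- and your reasoning there is correct, though note it is the positive density of degree-$4$ vertices in the construction, not the degree bound $3$-or-$4$ alone, that pushes the branching number strictly above $2$.
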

\begin{proof}The corollary follows from Theorem \ref{t68} and Lemma \ref{le68}.
\end{proof}

\begin{example}\label{e610}Let $\overline{G}=(\overline{V},\overline{E})$ be a vertex-transitive triangulation of the hyperbolic plane $\HH^2$ in which each vertex has degree 12 and each face has degree 3. Let $l$ be a directed doubly-infinite self-avoiding path of $\overline{G}$ such that at each vertex $v$ along $l$, both the left hand side of $l$ and right hand side of $l$ have 6 incident faces of $v$. Let $G$ be the graph on the right side of $l$ (including l). Then $G$ is semi-transitive. By \cite{Bab97}, $\overline{G}$ can be embedded in the hyperbolic plane such that each face is a regular triangle; hence $G$ inherits an embedding into the hyperbolic plane such that each ege is a geodesic. Then by corollary \ref{c69}, for all $p\in \left(p_c^{site}(G),1-p_c^{site}(G)\right)$, a.s. there are infinitely many infinite 1-clusters and infinitely many infinite 0-clusters. Moreover $p_c^{site}(G)=p_c^{site}(\overline{G})$ by Corollary 4.4. of \cite{bsjams}.
\end{example}

\begin{example}Let $\overline{G}=(\overline{V},\overline{E})$ and $l$, $G$ be given as in Example \ref{e610}. Let $G_1$ be a graph obtained from $G$ as follows:
\begin{itemize}
    \item for each face without a vertex along $l$, add an extra vertex in the center and join this vertex to every vertex of the face by an edge.
\end{itemize}

Then $G_1$ is semi-transitive and has an embedding into the hyperbolic plane such that each ege is a geodesic. Then by corollary \ref{c69}, for all $p\in \left(p_c^{site}(G_1),1-p_c^{site}(G_1)\right)$, a.s. there are infinitely many infinite 1-clusters and infinitely many infinite 0-clusters in the i.i.d.~Bernoulli($p$) site percolation on $G_1$.
\end{example}

\noindent{\textbf{Proof of Theorem \ref{t613}}.} When the graph $G$ satisfies the assumption of Theorem \ref{t613}, we place a vertex at the center of each face of $G$ and connect this new vertex by an edge to the boundary of each face, we obtain a locally finite infinite triangulation of the plane. Then the graph can be properly embedded into the plane, i.e. with no accumulation points in $\RR^2$ follows from Lemma 4.1 of \cite{Nach}. Then Theorem \ref{t613} follows from Corollary \ref{c69} and Theorem \ref{l613}.
$\hfill\Box$

\section{Critical Percolation Probabilities on Matching Graph Pairs}

\begin{lemma}\label{l113}Let $G=(V,E)$ be an infinite, connected, one-ended, planar graph. Assume that $G$ can be properly embedded into $\HH^2$ and that 
 the minimal vertex degree is at least 7.
\begin{enumerate}
\item Let $\mathcal{A}_1^*$ be the event that there is a unique infinite 1-cluster in the i.i.d.~Bernoulli site percolation of $G_*$. If
\begin{align}
\mathbb{P}_{p_c^{site}(G_*)}(\mathcal{A}_1^*)<1,\label{cr1}
\end{align}
then
\begin{align}
    p_u^{site}(G_*)\geq 1-p_c^{site}(G)>\frac{1}{2}\label{cl11}
\end{align}
\item Let $\mathcal{A}_1$ be the event that there is a unique infinite 1-cluster in the i.i.d.~Bernoulli site percolation of $G$. If
\begin{align*}
\mathbb{P}_{p_c^{site}(G)}(\mathcal{A}_1)<1.
\end{align*}
then
\begin{align*}
    p_u^{site}(G)\geq 1-p_c^{site}(G_*)>\frac{1}{2}
\end{align*}
\end{enumerate}
\end{lemma}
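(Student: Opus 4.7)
The plan is to handle Part (2) first as a direct corollary of Lemma \ref{lt1}, and then focus on Part (1), which is the substantive claim.

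For Part (2): since $G\subseteq G_\ast$ on the common vertex set, one has $p_c^{\mathrm{site}}(G_\ast)\le p_c^{\mathrm{site}}(G)<\tfrac12$, the latter strict bound being part of Lemma \ref{lt1}. Hence $1-p_c^{\mathrm{site}}(G_\ast)>\tfrac12$, and the interval $(p_c^{\mathrm{site}}(G),\,1-p_c^{\mathrm{site}}(G_\ast))$ is non-empty; by Lemma \ref{lt1} there are a.s.\ infinitely many infinite $1$-clusters in $G$ at every $p$ in this interval, so $\mathbb{P}_p(\mathcal A_1)=0$ there and therefore $p_u^{\mathrm{site}}(G)\ge 1-p_c^{\mathrm{site}}(G_\ast)$. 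The hypothesis $\mathbb{P}_{p_c^{\mathrm{site}}(G)}(\mathcal A_1)<1$ is in fact not needed in this direction; it is presumably retained for symmetry with Part (1).

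For Part (1), the bound $1-p_c^{\mathrm{site}}(G)>\tfrac12$ is again immediate from Lemma \ref{lt1}. For the main inequality $p_u^{\mathrm{site}}(G_\ast)\ge 1-p_c^{\mathrm{site}}(G)$, I would argue by contradiction: assume $p_u^{\mathrm{site}}(G_\ast)<1-p_c^{\mathrm{site}}(G)$ and pick $p\in(p_u^{\mathrm{site}}(G_\ast),\,1-p_c^{\mathrm{site}}(G))$. Under the standard coupling, at density $p$ there is a.s.\ a unique infinite $1$-$\ast$-cluster $\mathcal I_\ast$ in $G_\ast$ (since $p>p_u^{\mathrm{site}}(G_\ast)\ge p_c^{\mathrm{site}}(G_\ast)$), and simultaneously a.s.\ an infinite $0$-cluster in $G$ (since $1-p>p_c^{\mathrm{site}}(G)$), which sits inside an infinite $0$-$\ast$-cluster of $G_\ast$ because $G\subseteq G_\ast$. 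Applying Lemma \ref{la68} inside $G_\ast$ with $\mathbb{P}_p(\mathcal A_1^\ast)=1$ yields the uniform two-point bound $\mathbb{P}_p(u\xleftrightarrow{\ast} v)\ge \theta_\ast(u;p)\,\theta_\ast(v;p)>0$ for all $u,v\in V$, where $\theta_\ast(\cdot;p)$ is the probability of belonging to an infinite $1$-$\ast$-cluster at density $p$.

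The plan is then to combine this coexistence with Lemma \ref{l96}, which for $q<1-p_c^{\mathrm{site}}(T)$ (with $T$ the embedded tree of Lemma \ref{l59}) controls $\mathbb{P}_q(u\leftrightarrow v)$ in $G$ by $\mathbb{P}_q(u\xleftrightarrow{\ast} v)\,e^{-c_q d_{G_\ast}(u,v)}$. Choosing $p$ (or an auxiliary $q$) in the regime where Lemma \ref{l96} applies, the exponential decay translates the uniform lower bound on matching-graph connectivity into a rigidity statement: any non-uniqueness of the infinite $1$-$\ast$-cluster at densities close to $p_c^{\mathrm{site}}(G_\ast)$ must already be ruled out under the monotone coupling to $p$. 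A Zhang-type topological argument on the one-ended planar graph $G$, driven by the coexistence of $\mathcal I_\ast$ and the infinite $0$-cluster in $G$, then forces the infinite $1$-$\ast$-cluster to be unique already at $p_c^{\mathrm{site}}(G_\ast)$, contradicting the hypothesis. The main obstacle I anticipate is exactly this propagation step from $p$ down to $p_c^{\mathrm{site}}(G_\ast)$: because $G_\ast$ is not required to be planar, the standard Burton--Keane/Zhang topological argument cannot be carried out directly inside $G_\ast$; it must be staged in the planar $G$, with Lemma \ref{l96} serving as the bridge that converts $G$-connectivity estimates back into $G_\ast$-connectivity ones. Carefully balancing the uniform lower bound coming from Lemma \ref{la68} against the exponential decay factor $e^{-c_q d_{G_\ast}(u,v)}$ of Lemma \ref{l96}, so that the lower bound survives to $q=p_c^{\mathrm{site}}(G_\ast)$ while still witnessing a unique infinite $1$-$\ast$-cluster there, is the delicate heart of the argument.
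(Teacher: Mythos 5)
Your treatment of Part (2) is in the right spirit, but you have misdiagnosed the role of the hypothesis: $\mathbb{P}_{p_c^{site}(G)}(\mathcal A_1)<1$ is not decorative. Since $p_u^{site}$ is defined in (\ref{dpu}) as an infimum over the set of $p$ at which uniqueness holds a.s., to get $p_u^{site}(G)\ge 1-p_c^{site}(G_*)$ you must rule out a.s.\ uniqueness at \emph{every} $p<1-p_c^{site}(G_*)$. Your argument covers $p<p_c^{site}(G)$ (a.s.\ no infinite cluster) and $p\in(p_c^{site}(G),1-p_c^{site}(G_*))$ (Lemma \ref{lt1}), but says nothing about the single point $p=p_c^{site}(G)$; that is exactly what the hypothesis is for, and declaring it unnecessary leaves a hole at the critical point. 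The paper's entire proof is this three-case analysis over $p<1-p_c^{site}(G)$ (subcritical, critical, supercritical window), with the stated hypothesis discharging the critical case.

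For Part (1) you have abandoned this simple structure in favor of a contradiction argument that tries to \emph{establish} a.s.\ uniqueness of the infinite 1-$*$-cluster at $p_c^{site}(G_*)$ in order to contradict (\ref{cr1}). That is backwards and, as you yourself concede, incomplete: the ``propagation step from $p$ down to $p_c^{site}(G_*)$'' that you identify as the delicate heart of the argument is never carried out, and there is no mechanism available for it --- uniqueness is not a monotone event, and Proposition \ref{p62} only transports information upward in $p$, not downward to the critical point. None of the machinery you invoke (Lemma \ref{la68}, Lemma \ref{l96}, a Zhang-type argument) is needed. The correct, and the paper's, argument for Part (1) is the same case analysis applied to $G_*$: for $p<p_c^{site}(G_*)$ there is a.s.\ no infinite 1-$*$-cluster, hence no unique one; at $p=p_c^{site}(G_*)$ the hypothesis (\ref{cr1}) rules out a.s.\ uniqueness; and for $p\in(p_c^{site}(G_*),1-p_c^{site}(G))$ the matching-graph counterpart of Lemma \ref{lt1} gives at least two infinite 1-$*$-clusters with positive probability. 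Combined with $p_c^{site}(G)<\frac{1}{2}$ from Lemma \ref{lt1}, this yields (\ref{cl11}) directly from the definition (\ref{dpu}).
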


\begin{proof}We only prove Part (1) here; Part (2) can be proved similarly.

For each $p<1-p_c^{site}(G)$, the following cases might occur
\begin{itemize}
    \item $p<p_c^{site}(G_*)$, then a.s.~there are no infinite 1-*-clusters in the i.i.d.~Bernoulli($p$) site percolation on $G$.
    \item At $p:=p_c^{site}(G_*)$, (\ref{cr1}) holds.
    \item By Lemma \ref{lt1}, when $p\in (p_c^{site}(G_*),1-p_c^{site}(G))$,  a.s.~infinite 1-*-clusters have infinitely many ends, with strictly positive probability there are at least two distinct infinite 1-*-clusters.
\end{itemize}
Then (\ref{cl11}) follows from (\ref{dpu}).
\end{proof}

\begin{lemma}\label{l114}
\begin{enumerate}[label=(\Alph*)]
\item Let $G=(V,E)$ satisfy assumptions in Lemma \ref{l113}(1). Then
\begin{enumerate}
\item
for each $p>p_u^{site}(G_*)$, a.s.~there exists a unique infinite 1-cluster in the i.i.d.~Bernoulli($p$) site percolation on $G_*$.
\item 
\begin{align*}
    p_u^{site}(G_*)=1-p_c^{site}(G)
\end{align*}
\end{enumerate}
\item Let $G=(V,E)$ satisfy assumptions in Lemma \ref{l113}(2). Then
\begin{enumerate}
\item
for each $p>p_u^{site}(G)$, a.s.~there exists a unique infinite 1-cluster in the i.i.d.~Bernoulli($p$) site percolation on $G$.
\item 
\begin{align*}
    p_u^{site}(G)=1-p_c^{site}(G_*)
\end{align*}
\end{enumerate}
\end{enumerate}
\end{lemma}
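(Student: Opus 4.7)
The plan is to prove parts (A) and (B) in parallel; they are symmetric under interchanging the roles of $G$ and $G_*$ in the matching-pair duality. I describe the argument for part (A); part (B) is handled identically, with the dual form of the matching-pair duality used in place of the one used for (A).

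The lower bound $p_u^{\mathrm{site}}(G_*) \geq 1 - p_c^{\mathrm{site}}(G)$ is already provided by Lemma \ref{l113}(1). The substantive claim that remains is the following single statement: for every $p > 1 - p_c^{\mathrm{site}}(G)$, the i.i.d.\ Bernoulli($p$) site percolation on $G_*$ has a unique infinite 1-cluster almost surely. Indeed, this immediately yields the matching upper bound $p_u^{\mathrm{site}}(G_*) \leq 1 - p_c^{\mathrm{site}}(G)$ and hence (A)(2); and once (A)(2) is in hand, (A)(1) is just a restatement of this claim, because then $\{p > p_u^{\mathrm{site}}(G_*)\} = \{p > 1 - p_c^{\mathrm{site}}(G)\}$. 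So both parts of (A) reduce to this single uniqueness statement.

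To prove it, fix $p > 1 - p_c^{\mathrm{site}}(G)$ and set $q := 1-p < p_c^{\mathrm{site}}(G)$. The same configuration $\omega$ serves simultaneously as a Bernoulli($p$) colouring of $V$ governing 1-*-percolation on $G_*$ and as a Bernoulli($q$) assignment of the 0's governing 0-percolation on $G$; by the definition of $p_c^{\mathrm{site}}(G)$ and $q<p_c^{\mathrm{site}}(G)$, almost surely there is no infinite 0-cluster in $G$ under $\omega$. Argue by contradiction: assume that, with positive probability, two distinct infinite 1-*-clusters $C_1, C_2$ in $G_*$ coexist. Apply the planar matching-pair duality in the proper embedding of $G$ in $\RR^2$: inside each graph ball $B_n=B(v_0,n)$, Lemma \ref{lc29} guarantees that every finite face of $G$ is a cycle, so the Sykes--Essam/Kesten crossing dichotomy is available on $B_n$ and gives either a 1-*-path in $G_*$ inside $B_n$ joining $C_1\cap B_n$ to $C_2\cap B_n$, or a 0-path in $G$ inside $B_n$ separating them. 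The first alternative is excluded because $C_1$ and $C_2$ are distinct 1-*-clusters, so the second must hold for every sufficiently large $n$. Since $G$ is one-ended and $C_1,C_2$ are both infinite, the endpoints of these 0-separators are forced to escape to infinity along the unique end as $n\to\infty$, and a compactness/diagonal extraction yields an infinite 0-path, hence an infinite 0-cluster, in $G$. This contradicts $q<p_c^{\mathrm{site}}(G)$, completing the proof of (A)(1) and (A)(2). Part (B) follows by the same scheme with the dual statement of matching-pair duality (multiple infinite 1-clusters in $G$ force an infinite 0-*-cluster in $G_*$), combined with the lower bound from Lemma \ref{l113}(2).

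The step I expect to require the most care is the topological extraction in the last paragraph: converting the finite 0-separators produced in each $B_n$ into a genuinely infinite 0-cluster in $G$. The one-ended hypothesis is crucial here, since with multiple ends the separators could escape through different ends and fail to concatenate in the limit; Lemma \ref{lc29} is equally crucial, since it is what licences invoking the Jordan-curve-based matching-pair crossing dichotomy patch by patch (note that $G_*$ itself is generally non-planar, so the duality must be played entirely inside $G$). Everything else is bookkeeping once this topological step is secured.
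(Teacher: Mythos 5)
Your core argument is the same as the paper's for the substantive inequality: the paper also proves $p_u^{site}(G_*)\leq 1-p_c^{site}(G)$ by observing that for $p>1-p_c^{site}(G)$ there is a.s.\ no infinite $0$-cluster in $G$ (since $1-p<p_c^{site}(G)$), while two coexisting infinite $1$-$*$-clusters would, by planarity and one-endedness, force an infinite $0$-cluster with positive probability; the reverse inequality is taken from Lemma \ref{l113}, as you do. The paper asserts the planar-duality implication in one sentence, so your sketch of the separation/compactness argument is if anything more explicit than the source. Where you genuinely diverge is part (1): the paper deduces (Aa) from uniform percolation at levels $p\geq p_u^{site}(G_*)$ together with Proposition \ref{p62} (stability of infinite clusters, giving monotonicity of the uniqueness phase in $p$), whereas you obtain (1) as a corollary of having proved uniqueness \emph{pointwise} at every $p>1-p_c^{site}(G)$ and then identifying this set with $\{p>p_u^{site}(G_*)\}$ via (2). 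Your reduction is logically sound and has the advantage of bypassing the stability machinery entirely; the paper's route, by contrast, would also apply in settings where uniqueness is only known at some level below the given $p$.

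One point you should repair: your duality argument only shows that a.s.\ there are \emph{not two or more} infinite $1$-$*$-clusters, but the lemma asserts that a.s.\ there \emph{exists} a unique one. You need to add that for $p>1-p_c^{site}(G)$ one has $p>\tfrac12>p_c^{site}(G)\geq p_c^{site}(G_*)$ (using $p_c^{site}(G)<\tfrac12$ from Lemma \ref{lt1} and the fact that every $1$-cluster of $G$ is contained in a $1$-$*$-cluster), so that an infinite $1$-$*$-cluster exists a.s.; the analogous remark is needed for part (B). This is a one-line fix, and the paper includes the corresponding step explicitly.
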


\begin{proof}We only prove Part (A) of the Lemma; Part (b) can be proved similarly.

We first prove Part (Aa) of the theorem. By (\ref{cl11}), for any $p\in [p_u^{site}(G_*),1)$, there is uniform percolation at level $p$. Then the conclusion follows from Proposition \ref{p62} and the definition of $p_u^{site}(G_*)$.

Now we prove Part (Ab) of the theorem.
First we show that $p_u^{site}(G_*)\leq 1-p_c^{site}(G)$. For each $p>1-p_c^{site}(G)$, $1-p<p_c^{site}(G)$. Then a.s.~in the i.i.d.~Bernoulli($p$) site percolation, there are no infinite 0-clusters. Since $p_c^{site}(G_*)\leq p_c^{site}(G)<\frac{1}{2}$ if $p>1-p_c^{site}(G)$, then $p>\frac{1}{2}>p_c^{site}(G_*)$; hence a.s~there exist infinite 1-*-clusters
in the i.i.d.~Bernoulli($p$) site percolation.

If $p<p_u^{site}(G_*)$, with strictly positive probability there are at least two infinite 1-*-clusters. Since the graph $G$ is one-ended, planar, simple and locally finite; if with strictly positive probability there are at least two infinite 1-*-clusters, then with strictly positive probability there exists an infinite 0-cluster. But this contradicts $1-p<p_c^{site}(G)$. Therefore for any $p>1-p_c^{site}(G)$, we must have $p\geq p_u^{site}(G_*)$; this implies $p_u^{site}(G_*)\leq 1-p_c^{site}(G)$.
Then Part (2) follows from (\ref{c11}).
\end{proof}

 \begin{lemma}\label{lma83}Let $G=(V,E)$ be an infinite, connected, planar graph properly embedded into $\RR^2$. Assume that 
 the minimal vertex degree of $G$ is at least 7. If $G$ is semi-transitive, then
 \begin{align*}
 \mathbb{P}_{p_c^{site}(G)}(\mathcal{A}_f)=\mathbb{P}_{p_c^{site}(G)}(\mathcal{A}_1)=0.
 \end{align*}
 \end{lemma}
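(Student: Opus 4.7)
The plan is to combine the exponential decay of the two-point function at $p_c^{site}(G)$ from Lemma \ref{l96} with a uniform lower bound on ``ball-to-infinity'' probabilities coming from the semi-transitivity of $G$, together with Lemma \ref{la68}.

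I would first check that Lemma \ref{l96} applies at $p=p_c^{site}(G)$. By Lemma \ref{l59} the tree $T$ embedded in $G$ has interior vertex degrees $3$ or $4$, so its branching number is at least $2$ and $p_c^{site}(T)\leq \tfrac12$. Together with $p_c^{site}(G)<\tfrac12$ (from Lemma \ref{lt1}), we get $p_c^{site}(G)<1-p_c^{site}(T)$, whence
$$\mathbb{P}_{p_c}(u\leftrightarrow v)\leq e^{-c\,d_{G_*}(u,v)}\qquad\text{for all } u,v\in V,$$
for some $c>0$.

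To show $\mathbb{P}_{p_c}(\mathcal{A}_1)=0$, suppose on the contrary that $\mathbb{P}_{p_c}(\mathcal{A}_1)>0$. Since $\mathcal{A}_1$ is tail-measurable for the i.i.d.\ Bernoulli marks, Kolmogorov's $0$-$1$ law gives $\mathbb{P}_{p_c}(\mathcal{A}_1)=1$; in particular an infinite 1-cluster exists almost surely. Adapting the proof of Lemma \ref{le68} in \cite{hps98} to $p=p_c$ (the finiteness of $V_F$ ensures that $\min_{y\in V_F}\mathbb{P}_{p_c}(B(y,N)\text{ intersects an infinite 1-cluster})\to 1$ as $N\to\infty$, and the injective homomorphisms from semi-transitivity propagate this lower bound to all $x\in V$), one obtains uniform percolation at $p_c$:
$$\inf_{x\in V}\mathbb{P}_{p_c}(B(x,N)\text{ intersects an infinite 1-cluster})\to 1\quad \text{as }N\to\infty.$$
A ball-variant of Lemma \ref{la68} (proved analogously, using that two balls each intersecting the unique infinite 1-cluster are joined through it) yields
$$\mathbb{P}_{p_c}(B(u,N)\leftrightarrow B(v,N))\geq \mathbb{P}_{p_c}(B(u,N)\leftrightarrow\infty)\,\mathbb{P}_{p_c}(B(v,N)\leftrightarrow\infty)\,\mathbb{P}_{p_c}(\mathcal{A}_1).$$
Taking $N$ large so that the first two right-hand factors exceed $\tfrac12$, this is $\geq \tfrac14$ uniformly in $u,v\in V$. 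On the other hand, the uniformly bounded vertex degree bounds $|B(\cdot,N)|$ uniformly, so a union bound applied to the exponential decay gives $\mathbb{P}_{p_c}(B(u,N)\leftrightarrow B(v,N))\to 0$ as $d_{G_*}(u,v)\to \infty$; this is a contradiction.

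For $\mathbb{P}_{p_c}(\mathcal{A}_f)=0$, under the natural interpretation that $\mathcal{A}_f$ is the event that the number of infinite 1-clusters is positive and finite, the preceding step reduces us to the case in which, almost surely at $p_c$, there are at least two but only finitely many infinite 1-clusters. I would derive a contradiction by transferring the surgery argument of Lemmas \ref{l65}--\ref{l66} to $p_c$: using disjoint subtrees of the embedded tree $T$ to produce mutually independent events, each of which—when it occurs—creates one additional infinite 1-cluster; the Borel-Cantelli lemma would then force infinitely many infinite 1-clusters a.s., contradicting finiteness. The main obstacle here is that the surgery arguments of Section \ref{sect:up} rely on the uniform percolation hypothesis for $p>p_c$ (via Lemma \ref{le68} and Proposition \ref{p62}); at $p=p_c$ itself one must instead use the uniform percolation just established under the contradiction assumption, being careful about the standard coupling, which does not monotonize the number of infinite clusters.
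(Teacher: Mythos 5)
Your overall strategy --- pit the exponential decay of Lemma~\ref{l96} against a uniform lower bound on connection probabilities coming from Lemma~\ref{la68} plus semi-transitivity --- is the right one and matches the paper. However, you take an unnecessarily long detour and run into a real obstruction in the second half.

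For $\mathbb{P}_{p_c}(\mathcal{A}_1)=0$ the paper's route is much more direct than yours. You do not need uniform percolation at $p_c$, a ball-variant of Lemma~\ref{la68}, or uniform bounds on $|B(\cdot,N)|$ (the latter is not even granted by the hypotheses: the lemma assumes only that $G$ is properly embedded with minimal degree $\geq 7$ and is semi-transitive, which gives local finiteness but not a uniform degree bound). The observation you are missing is that semi-transitivity already gives a uniform lower bound at the level of single vertices: by Definition~\ref{d31}, for every $x\in V$ there is $y\in V_F$ and an injective homomorphism sending $y$ to $x$, so $\mathbb{P}_p(x\leftrightarrow\infty)\geq\min_{y\in V_F}\mathbb{P}_p(y\leftrightarrow\infty)$. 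If $\mathbb{P}_{p_c}(\mathcal{A}_1)>0$ then this minimum is strictly positive (since $0<p_c<1$, every vertex reaches the infinite cluster with positive probability), and Lemma~\ref{la68} gives $\mathbb{P}_{p_c}(u\leftrightarrow v)\geq[\min_{y\in V_F}\mathbb{P}_{p_c}(y\leftrightarrow\infty)]^2\,\mathbb{P}_{p_c}(\mathcal{A}_1)>0$ uniformly in $u,v$, directly contradicting the decay from Lemma~\ref{l96}. All the ball machinery is superfluous.

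For $\mathbb{P}_{p_c}(\mathcal{A}_f)=0$ your plan has a genuine gap. You want to port the surgery constructions of Lemmas~\ref{l65}--\ref{l66} to $p=p_c$, but those rely on Proposition~\ref{p62} applied across two distinct levels $p_1<p_2$, and you yourself flag that it is unclear how to make this work at a single level $p_c$; you never resolve that, so the argument as proposed does not close. Moreover this is the wrong direction of reduction. The correct (and standard) reduction, which the paper uses, is that $\mathbb{P}_{p_c}(\mathcal{A}_f)>0$ implies $\mathbb{P}_{p_c}(\mathcal{A}_1)>0$: if with positive probability there are finitely many infinite $1$-clusters, choose a large finite ball intersecting all of them and force all its sites to state $1$; since this finite modification has positive probability, it yields a configuration with a unique infinite cluster with positive probability. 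Once $\mathbb{P}_{p_c}(\mathcal{A}_1)>0$ is obtained, the first part of the proof gives the contradiction. No surgery or Borel--Cantelli is needed.
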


 \begin{proof}Let $V_F$ be given as in Definition \ref{df61}. Then for any $p\in [0,1]$, and $x\in V$,
 \begin{align*}
 \mathbb{P}_p(x\leftrightarrow\infty)\geq \min_{y\in V_F}\mathbb{P}_p(y\leftrightarrow\infty)
 \end{align*}
 Note that the graph $G$ is locally finite. By Lemma \ref{l96}, for any fixed $v$
 \begin{align*}
\lim_{n\rightarrow\infty}\sup_{u:d_{G}(u,v)\geq n} \mathbb{P}_{p_c^{site}(G)}(u\leftrightarrow v)=0.
 \end{align*}
 However if $\mathbb{P}_{p_c^{site}(G)}(\mathcal{A}_f)>0$, then $\mathbb{P}_{p_c^{site}(G)}(\mathcal{A}_1)>0$, by Lemma \ref{la68} we have
 \begin{align*}
 \mathbb{P}_{p_c^{site}(G)}(u\leftrightarrow v)\geq [\min_{y\in V_F}\mathbb{P}_p(y\leftrightarrow\infty)]^2\mathbb{P}_p(\mathcal{A}_1)>0
 \end{align*}
 for all $u,v$. The contradiction implies the Lemma.
 \end{proof}

\noindent{\textbf{Proof of Theorem \ref{mt1}}.} Theorem \ref{mt1} follows from Lemmas \ref{l114}(B) and \ref{lma83}.

\section{Vertex isoperimetric constant and Non-uniqueness at $\frac{1}{2}$}\label{sect:vic}

In this section, we prove Theorem \ref{l62}. Recall the following result proved in \cite{BSte}.

\begin{lemma}\label{lte}Let $G$ be a locally  finite graph with Cheeger constant $\mathbf{i}_{V}(G)\geq n$, where $n\geq 1$ is an integer.  Then $G$ has a spanning forest such that every tree in the forest is isomorphic to $T_{n+1}$. Here $T_{n+1}$ is the tree in which the root has degree $n$,  and the other vertices have degree $n+2$. 
\end{lemma}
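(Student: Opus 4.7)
The plan is to reduce the construction of the desired spanning forest to a marriage/flow argument in which $\mathbf{i}_V(G)\geq n$ plays the role of Hall's condition. Observe that a spanning forest whose components are all copies of $T_{n+1}$ is equivalent to an orientation of a subset $E_F\subseteq E$ such that every vertex has in-degree $0$ (a root) with out-degree exactly $n$, or in-degree $1$ (a non-root) with out-degree exactly $n+1$; any vertex of degree exactly $n$ in $G$ is forced to be a root.

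First I would build a bipartite ``parent-assignment'' auxiliary graph $B$ whose left side is $V\times\{1,\dots,n+1\}$ (interpreted as $n+1$ child slots per vertex) and whose right side is $V$ (vertices seeking a parent), with an edge between $(v,i)$ and $u$ whenever $u\sim v$ in $G$. The isoperimetric hypothesis supplies Hall's marriage condition directly: for any finite $S\subseteq V$ on the right,
\begin{align*}
|N_B(S)|=(n+1)|N_G(S)|\geq (n+1)|\partial_V S|\geq n(n+1)|S|\geq |S|.
\end{align*}
Extending Hall by compactness on the locally finite graph $B$ yields a right-saturating matching, which defines a function $p\colon V\to V$ with $p(v)\sim v$; each $v$ receives a parent, and, by injectivity of the matching, supplies at most $n+1$ children.

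The function $p$ defines a functional graph whose weakly connected components are either in-trees rooted at a vertex of in-degree $0$ or contain a unique directed cycle or bi-infinite $p$-orbit. Cycles can be broken by removing a single edge, which creates a new root on the affected component; bi-infinite $p$-orbits are handled by running the matching along a finite exhaustion $V_1\subseteq V_2\subseteq\cdots$ and passing to a diagonal subsequential limit, choosing a root for each would-be infinite chain at finite stages. The main technical obstacle is arranging the out-degrees to land \emph{exactly} at $n+1$ for non-roots and $n$ for roots, rather than at the upper bounds a plain matching yields. To handle this I would recast the problem as a max-flow computation on a network with per-vertex supply capacity $n+1$ and per-vertex sink demand $1$, certifying the min-cut via the full strength of the isoperimetric inequality (not merely the pointwise consequence $\deg(v)\geq n$), and performing augmenting-path exchanges at each root to trade a ``surplus'' child-edge for root status. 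Once exact degrees are achieved without cycles, every weakly connected component is a locally finite tree with root of degree $n$ and every other vertex of degree $n+2$; the uniqueness characterization of $T_{n+1}$ then identifies each component as an isomorphic copy of $T_{n+1}$, producing the desired spanning forest.
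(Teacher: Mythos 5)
The paper does not actually prove this lemma: it is quoted verbatim from \cite{BSte} (Benjamini--Schramm), so there is no in-paper argument to compare against. Your proposal is in the spirit of the flow/Hall-type argument used there, but as written it has a genuine gap, and the gap sits exactly where the hypothesis $\mathbf{i}_V(G)\geq n$ has to do its work.

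The Hall condition you verify is the easy (and essentially vacuous) direction. Saturating the right side of $B$ only guarantees that every vertex gets one parent and supplies \emph{at most} $n+1$ children; the inequality $|N_B(S)|\geq n(n+1)|S|\geq |S|$ would already follow from every vertex having a single neighbour, so the isoperimetric hypothesis is not being used in any essential way there. The entire content of the lemma is the opposite direction: forcing every non-root to acquire \emph{exactly} $n+1$ children and every root exactly $n$. Note that a left-saturating matching is impossible in general, since a vertex of degree $n$ (which $\mathbf{i}_V(G)\geq n$ permits) cannot serve $n+1$ child slots; so one needs a deficiency version of Hall / a min-cut verification in which the total deficiency is absorbed by designating exactly one root per component, and the component structure is itself an output of the matching. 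You defer all of this to ``recast the problem as a max-flow computation \ldots certifying the min-cut via the full strength of the isoperimetric inequality \ldots performing augmenting-path exchanges,'' with no verification of the cut condition, no argument that the exchanges are consistent or terminate on an infinite graph, and no proof that they preserve acyclicity. Likewise, the compactness step is not controlled: a diagonal limit of matchings on a finite exhaustion can perfectly well produce a component in which the parent chain from some vertex never terminates, i.e.\ a rootless tree all of whose vertices have degree $n+2$, which is \emph{not} isomorphic to $T_{n+1}$. Until the exact-degree and well-foundedness issues are actually established from $\mathbf{i}_V(G)\geq n$, the proposal is a plausible plan rather than a proof; for a correct execution of this strategy you should consult the argument in \cite{BSte}.
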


Note that $\mathbf{i}_V(T_{n+1})=n$.

\bigskip

\noindent{\textbf{Proof of Theorem \ref{l62}}(1).} By Lemma \ref{lte}, $G$ has a spanning forest such that every tree in the forest is isomorphic to $T_{n+1}$, which is a tree whose root vertex has degree $n$, and all the other vertices have degree $n+2$. 

Fix a tree $T$ in the spanning forest.
Then $p_c^{site}(T)=\frac{1}{n+1}$. For any $p\in \left(\frac{1}{n+1},\frac{n}{n+1}\right)$ there are infinitely many infinite open clusters and infinitely many infinite closed clusters in the i.i.d.~Bernoulli$\left(p\right)$ site percolation on $T$.  Pick a vertex $w$ of $T$. Let $w0$ and $w1$ be two offsprings of $w$; and let $w00$, $w01$ (resp.\ $w10$, $w11$) are two offsprings of $w0$ (resp.\ $w1$). For $i,j\in\{0,1\}$, let $l_{ij}$ be the unique path of $T$ (consisting of edges of $T$) joining $w$ and $wij$. Assume that when the graph is identified with it embedding in the plane, the paths $l_{00}$, $l_{01}$, $l_{10}$ and $l_{11}$ are in cyclic order around $w$.

Suppose that the vertices $w;w0;w00;w1;w10$ are closed, and each of $w00;w10$ percolates (in closed vertices)in the subtree of $T$ rooted at it.  This implies that the open clusters intersecting the subtree rooted at $w01$ will be disjoint from those at the subtree rooted at $w11$, which gives non-uniqueness, because each of these subtrees is sure to contain infinite open clusters.  With probability 1.  Hence, by the Borel-Contelli lemma, for $p\in (\frac{1}{n+1};\frac{n}{n+1})$, a.s.~there are infinitely many infinite open clusters in the i.i.d.~Bernoulli($p$) site percolation on $G$.
$\hfill\Box$

\bigskip

\noindent\textbf{Proof of Theorem \ref{l62}(2).} By Lemma \ref{lte}, $G$ has a spanning forest such that every tree in the forest is isomorphic to $T_{2}$, which is a tree whose root vertex has degree $1$, and all the other vertices have degree $3$. Then $p_c^{site}(T)=\frac{1}{2}$. 

Since $G$ is planar, $G$ can be drawn on the plane in such a way that edges intersects only at vertices; and any compact subset of the plane intersects at most finitely many edges and vertices. Fix one tree $T$ in the spanning forest of $G$. For a nonnegative integer $k\geq 0$, let level-$k$ vertices consist of all the vertices of $T$ whose graph distance to the root is $k$.

Let $k\geq 1$ and $w$ be a level-$k$ vertex of $G$. Let $T_{w}$ be the subtree of $T$ rooted at $w$.

Passing through $w$, there exist a doubly infinite self-avoiding path $l_{w}$ consisting of edges of $T_w$, which divide the hyperbolic plane $\HH^2$ into two half spaces $H_1$ and $H_2$, such that
$H_2\cap T_{w}=\emptyset$. Let $G_{w}:=G\cap [H_1\cup l_{w}]$. Under the assumption that $G$ has uniformly bounded vertex degree and uniformly bounded face degree, we have
\begin{align}
p_c^{site}(G_{w})<\frac{1}{2};\ \forall\ w\in V(T). \label{pcgw}
\end{align}
We shall prove (\ref{pcgw}) in the appendix using enhancement arguments.

Since $T_w$ is a binary tree we can label the offsprings of each vertex by either 0 or 1.
Consider the i.i.d.~Bernoulli($\frac{1}{2}$) site percolation on $G_w$.
Suppose that the vertices $w;w0;w00;w1;w10$ are closed, and each of $w00;w10$ percolates (in closed vertices) in the subgraph $G_{w00};G_{w10}$ respectively.  This implies that the open clusters intersecting the subgraph $G_{w01}$ will be disjoint from those open clusters intersecting $G_{w11}$, which gives non-uniqueness, because each of these subgraphs is sure to contain infinite open clusters.  With probability 1, there is some such $w$.  Hence a.s.~there are infinitely many infinite open clusters in the i.i.d.~Bernoulli($\frac{1}{2}$) site percolation on $G_w$ by the Borel-Contelli lemma.
$\hfill\Box$

\appendix

\section{Proof of (\ref{pcgw})}

Let $G,T, T_w, G_w$ be given as in the proof of Theorem \ref{l62}. Let $p,s\in(0,1)$. Since $p_c^{site}(T_w)=\frac{1}{2}$, it suffices to show that $p_c^{site}(G_w)<p_c^{site}(T_w)$. 

To each vertex of $G_w$ associate an i.i.d.~Bernoulli($p$) random variable. To each edge in $E(G_w)\setminus E(T_w)$, associate an i.i.d.~Bernoulli($s$) random variable. Let $(\omega,\eta)\in \{0,1\}^{V(G_w)}\times \{0,1\}^{E(G_w)\setminus E(T_w)}$. Let $x\in V(G_w)$ and $e\in E(G_w)\setminus E(T_w)$. We define $\omega^x$, $\omega_x$, $\eta^e$, $\eta_e$ by
\begin{align*}
    \omega^x(y)=\begin{cases}\omega(y)&\mathrm{if}\ y\neq x\\ 1&\mathrm{if}\ y=x\end{cases};\qquad
    \omega_x(y)=\begin{cases}\omega(y)&\mathrm{if}\ y\neq x\\ 0&\mathrm{if}\ y=x\end{cases};\\
    \eta^e(f)=\begin{cases}\eta(f)&\mathrm{if}\ f\neq e\\ 1&\mathrm{if}\ f=e\end{cases};\qquad
    \eta_e(f)=\begin{cases}\eta(f)&\mathrm{if}\ f\neq e\\ 0&\mathrm{if}\ f=e\end{cases}.
\end{align*}

A (finite or infinite) open path in $(\omega,\eta)$ is an alternating sequence of vertices and edges of $G_w$
\begin{align}
v_1, e_1,v_2,e_2,v_3,e_3,\ldots\label{dpv}
\end{align}
such that for each $i\geq 1$
\begin{itemize}
    \item $v_i\in V(G_w)$, $e_i\in E(G_w)$ and
    $v_i$ and $v_{i+1}$ are two endpoints of $e_i$; and
    \item $\omega(v_i)=1$; and
    \item if $e_i$ is an edge of $E(G_w)\setminus E(T_w)$, then $\eta(e_i)=1$.
\end{itemize}

Let $v$ be a vertex of $G_w$. Let $A_n(v)$ be the event that $v$ is joined to $\partial B_{G_w}(v,n)$ by an open path in $(\omega,\eta)$; let $\theta_n(v,p,s)$ be the probability of $A_n(v)$. Here $B_{G_w}(v,n)$ is the subgraph of $G_w$ induced by all the vertices of $G_w$ whose graph distance to $v$ is at most $n$.  By the Russo's formula we obtain
\begin{align*}
&\frac{\partial \theta_n(v,p,s)}{\partial p}:=\sum_{z\in V(G_w)}
\mathbb{P}_{p,s}(z\ \mathrm{is\ pivotal
\ for\ }A_n(v));\\
&\frac{\partial \theta_n(v,p,s)}{\partial s}:=\sum_{e\in E(G_w)\setminus E(T_w)}
\mathbb{P}_{p,s}(e\ \mathrm{is\ pivotal
\ for\ }A_n(v));
\end{align*}
where
\begin{itemize}
    \item $v$ is pivotal for $A_n(v)$, if $I_{A_n(v)}(\omega^v,\eta)=1$ and $I_{A_n(v)}(\omega_v,\eta)=0$.
    \item $e$ is pivotal for $A_n(v)$, if $I_{A_n(v)}(\omega,\eta^e)=1$ and $I_{A_n(v)}(\omega,\eta_e)=0$.
\end{itemize}
and $I_{A_n(v)}(\omega,\eta)$ is the indicator for the event $A_n(v)$.

Let $R$ (resp.\ $D$) be the maximal face degree (resp.\ vertex degree) in $G$. Under the assumption that $G$ has uniformly bounded vertex degree and uniformly bounded face degree, we obtain that $R<\infty$ and $D<\infty$.
Let $z\in V(G_w)$. Assume
\begin{align*}
n\geq d_{T_w}(v,z)+DR+1;
\end{align*}
where $d_{T_w}(v,z)$ is the graph distance between $v$ and $z$ on $T_w$.
Consider the event that $z$ is pivotal for $A_n(v)$. Then there exists an open path $l_{vz}$ joining $v$ and $\partial A_n(v)$ passing through $z$ if $\omega(z)=1$; if $\omega(z)=0$ such an open path does not exist. Assume $l_{vz}$ has the form (\ref{dpv}) with $v=v_1$. Without loss of generality, we assume that $l_{vz}$ satisfies the following conditions
\begin{enumerate}[label=(\alph*)]
    \item $v_i\neq v_j$ whenever $i\neq j$; and
    \item $d_{T_w}(v_i,v_j)=1$ if and only if $|i-j|=1$.
\end{enumerate}
(a) is called the self-avoiding condition; and (b) is called the non-self-touching condition.

The following cases might occur
\begin{enumerate}
    \item along $l_{vz}$, $z$ is incident to an edge $e$ in $E(G_w)\setminus E(T_w)$. Let $(\hat{\omega},\hat{\eta})\in \{0,1\}^{V(G_w)}\times\{0,1\}^{E(G_w)\setminus E(T_w)}$ be the configuration obtained from $(\omega,\eta)$ as follows:
    \begin{itemize}
        \item for each $\langle z,z_1 \rangle\in E(T_w)\setminus l_{vz}$, let $\hat{\omega}(z_1)=0$; and
        \item for each $\langle z,z_2 \rangle\in E(G_w)\setminus [E(T_w)\cup l_{vz}]$, let $\hat{\eta}(\langle z, z_2\rangle)=0$.
    \end{itemize}
    Then in the new configuration $(\hat{\omega},\hat{\eta})$, $e$ is a pivotal edge for the event $A_n(v)$.
    \item along $l_{vz}$, $z$ is incident to no edges in $E(G_w)\setminus E(T_w)$. Let $e_3=\langle z,z_3 \rangle$ and $e_4=\langle z,z_4 \rangle$ be the two incident edges of $z$ in $l_{vz}\cap E(T_w)$. We can find a sequence of faces $f_1,\ldots,f_k$ in $G_w$ satisfying the following conditions:
    \begin{itemize}
        \item $e_3\in f_1$ and $e_4\in f_k$;
        \item for each $1\leq i\leq k-1$, $f_i$ and $f_{i+1}$ share an edge, one of whose endpoints is $z$;
        \item moving along $l_{vz}$, all faces $f_1,\ldots,f_k$ are one one side of $l_{vz}$;
    \end{itemize}
    The outer boundary $\zeta$ of $\cup_{i=1}^k f_k$ is the connected component of $\partial[\cup_{i=1}^k f_k]$ that is incident to the unbounded component of $\HH^2\setminus [\cup_{i=1}^k f_k]$. We can find two vertices $a,b\in \zeta\cap V(G_w)\cap l_{vz}$, such that the portion of $\zeta_{ab}$ of $\zeta$ between $a$ and $b$ is disjoint from $l_{vz}$.
    
    We write $\zeta_{ab}$ in the form (\ref{dpv}) with $a=v_1$, 
    then after changing the path if necessary, we can obtain that $\zeta_{ab}$ satisfies the self-avoiding condition (a) and non-self-touching condition (b).
    
    Note that $\zeta_{ab}$ contains at least one edge $e$ in $E(G_w)\setminus E(T_w)$.
    
    Consider $l_{vz}$ to start with $v$ and ending at a  vertex in $\partial B_n(v)$ in $G_w$.
    Let $l_1$ be the portion of $l_{vz}$ between $v$ and $a$ and let $l_2$ be the portion of $l_{vz}$ after $b$, such that 
    \begin{align*}
        l_{1}\cap \zeta_{ab}=\{a\};\ l_2\cap \zeta_{ab}=\{b\};\ l_1\cap l_2=\emptyset;\ l_{vz}=l_1\cup l_2\cup \zeta_{ab}.
    \end{align*}
    We already have that each one of $l_1$, $l_2$ and $\zeta_{ab}$ satisfies conditions (a) and (b); and that a vertex in $l_1$ and a vertex in $l_2$ cannot come within distance 1 (distance in $T_w$) of each other if there distance along $l_{vz}$ is at least 2. Hence (a) or (b) can only be violated by either (i) a vertex along $l_1$ and a vertex along $\zeta_{ab}$; or (ii) a vertex along $l_2$ and a vertex along $\zeta_{ab}$.
    
    Let $B_{G_w}(z,DR)$ be the subgraph of $G_w$ induced by the set of vertices in $G_w$ whose graph distance in $G_w$ to $z$ is at most $DR$. Note that
    \begin{align*}
    l_{ab}\subseteq B_{G_w}\left(z,\left\lfloor\frac{DR}{2} \right\rfloor\right)
    \end{align*}
    and
    \begin{align*}
        D\geq 3;\ R\geq 3.
    \end{align*}
    Hence if (a) or (b) is violated as in case (i) or case (ii); we can change configurations only in $B_{G_w}$ to make sure both (a) and (b) are satisfied. More precisely, if there exist $x\in l_1$ and $y\in \zeta_{ab}$ such that $d_{T_w}(x,y)=1$; and $x$ and $y$ are not adjacent vertices along $l_{vz}$, then
    \begin{itemize}
        \item let $\langle x_1,x \rangle $ be the first edge incident to $x$ visited by $l_{vz}$ and let $\langle y,y_1 \rangle$ be the last edge incident to $y$ visited by $l_{vz}$. Let $\langle x_2,x\rangle$ (resp.\ $\langle y_2,x\rangle$) be an arbitrary incident edge of $x$ (resp.\ $y$) other than $\langle x_1,x \rangle$ (resp.\ $\langle y_1,y \rangle$); 
        \begin{itemize}
            \item If $\langle x_2,x \rangle\in E(G_w)\setminus E(T_w)$,
            (resp.\ $\langle y_2,y \rangle\in E(G_w)\setminus E(T_w)$), make $\langle x_2,x\rangle$ (resp.\ $\langle y_2,y \rangle$) closed in $\eta$;
            \item If $\langle x_2,x \rangle\in E(T_w)$,
            (resp.\ $\langle y_2,y \rangle\in E(T_w)$), make $x_2$ (resp.\ $y_2$) closed in $\omega$.
        \end{itemize}
    \end{itemize}
    All the other cases of violations of (a) or (b) can be treated similarly. After changing configurations, the path $l_{vz}$ still contains at least one edge $e$ in $[E(G_w)\setminus E(T_w)]\cap B_{G_w}(z,DR)$.

     Let $(\hat{\omega},\hat{\eta})\in \{0,1\}^{V(G_w)}\times\{0,1\}^{E(G_w)\setminus E(T_w)}$ be the configuration obtained from $(\omega,\eta)$ as follows:
     \begin{itemize}
         \item For each vertex $u\in B_{G_w}(z,DR)$, if $u\notin l_{vz}$, let $\hat{\omega}(u)=0$.
         \item Let $\hat{\eta}=\eta$.
     \end{itemize}
\end{enumerate}
Then in the configuration $(\hat{\omega},\hat{\eta})$, $e$ is pivotal.
Hence we obtain
\begin{align*}
    \frac{\partial \theta_n(v,p,s)}{\partial p}&\leq \left[\max\left\{\frac{p}{1-p},\frac{1-p}{p}\right\}\right]^{|V(B_{G_w}(z,DR))|}\left[ \max\left\{\frac{s}{1-s},\frac{1-s}{s}\right\}\right]^{^{|E(B_{G_w}(z,DR))|}}\\
    &(|V(B_{G_w}(z,DR))|+|E(B_{G_w}(z,DR))|)
     \frac{\partial \theta_n(v,p,s)}{\partial s}\\
     &=\nu(p,s)\frac{\partial \theta_n(v,p,s)}{\partial s}
\end{align*}

Note that site percolation configuration on $G_w$ corresponds to $s=1$; while the site percolation on $T_w$ corresponds to $s=0$. Then the conclusion follows from similar arguments as in Page 71 of \cite{grgP}. 

\section{Explicit Construction of Embedded Trees}\label{sect:B}

\subsection{Construction of an embedded tree for a graph with minimal vertex degree at least 7}

Let $v\in V$. Let $v_{0}$, $v_{1}$ be two vertices adjacent to $v$ in $G$ such that $v$, $v_{0}$, $v_{1}$ share a face. Starting from $v,v_0$ construct a walk
\begin{align*}
\pi_0:=v,v_{0},v_{00},v_{000},\ldots,
\end{align*}
Starting from $v,v_1$ construct a walk 
\begin{align*}
\pi_1:=v,v_1,v_{11},v_{111},\ldots,
\end{align*}
such that
\begin{itemize}
    \item moving along $v_0,v,v_1$ in order, the face shared by $v_0,v,v_1$ is on the right; and
    \item moving along the walk $\pi_0$ starting from $v$, at each vertex $v_{0^k}$ ($k\geq 1$), there are exactly 3 incident faces on the right of $\pi_0$; and
    \item moving along the walk $\pi_1$ starting from $v$, at each vertex $v_{1^k}$ ($k\geq 1$), there are exactly 3 incident faces on the left of $\pi_1$.
\end{itemize}
One can show that both $\pi_0$ and $\pi_1$ are infinite and self-avoiding.

Let
\begin{align*}
   \pi_{1,1}:=\pi_1\setminus \{v\}=v_1,v_{11},v_{111},\ldots
\end{align*}

There exists $v_{01}\in V$ such that
\begin{itemize}
    \item $v_{01}$ is adjacent to $v_0$; and
    \item $v_0,v_{00},v_{01}$ share a face on the left of the walk $\pi_0$.
\end{itemize}
Similarly, there exist $v_{10},v_{1,\frac{1}{2}}\in V$ such that
\begin{itemize}
    \item both $v_{10}$ and $v_{1,\frac{1}{2}}$ are adjacent to $v_1$; and
    \item $v_1,v_{1,\frac{1}{2}},v_{11}$ share a face on the right of the walk $\pi_1$; and
    \item $v_1,v_{10},v_{1,\frac{1}{2}}$ share a face; moving along $v_{10},v_1,v_{1,\frac{1}{2}}$ in order, the face is on the right.
\end{itemize}
Note that $v_{01}\neq v$ and $v_{10}\neq v$, $v_{1,\frac{1}{2}}\neq v$ since each vertex in $G$ has degree at least 7.

Starting from $v_0,v_{01}$, construct a walk
\begin{align*}
    \pi_{01}:=v_0,v_{01},v_{011},v_{0111},\ldots
\end{align*}
Starting from $v_1,v_{10}$, construct a walk
\begin{align*}
    \pi_{10}:=v_1,v_{10},v_{100},v_{1000},\ldots
\end{align*}
Assume that
\begin{itemize}
    \item  moving along $v_{00},v_{0},v_{01}$ in order, the face shared by $v_{00},v_{0},v_{01}$ is on the right; and
     \item  moving along $v_{1,\frac{1}{2}},v_{1},v_{11}$ in order, the face shared by these vertices is on the right; and
    \item moving along the walk $\pi_{01}$ starting from $v_0$, at each vertex $v_{01^k}$ ($k\geq 1$), there are exactly 3 incident faces on the left of $\pi_{01}$; and
    \item moving along the walk $\pi_{10}$ starting from $v$, at each vertex $v_{10^k}$ ($k\geq 1$), there are exactly 3 incident faces on the right of $\pi_{10}$.
\end{itemize}
One can show that both walks are infinite and self-avoiding. Furthermore, let
\begin{align*}
    \tilde{\pi}_{01}:=v,\pi_{01};\qquad 
    \tilde{\pi}_{10}:=v,\pi_{10}
\end{align*}
One can show that $\tilde{\pi}_{01}$ is self-avoiding.

Moreover, one can show that $\tilde{\pi}_{10}$ is self-avoiding and that $\pi_{01}$ and $\pi_{10}$ are disjoint.  We repeat the same construction with $(v_0,v,v_1)$ replaced by $(v_{00},v_0,v_{01})$.

Starting $v_1,v_{1,\frac{1}{2}}$, we construct a walk
\begin{align*}
\pi_{1,\frac{1}{2}} :=   v_{1},v_{1,\frac{1}{2}}, v_{1,\frac{1}{2},1},v_{1,\frac{1}{2},1,1},\ldots
\end{align*}
 such that
\begin{itemize}
    \item moving along the walk $\pi_{1,\frac{1}{2}}$ staring from $v_1$, at each vertex $\pi_{1,\frac{1}{2},1^k}$ ($k\geq 0$), there are exactly 3 incident faces on the left.
\end{itemize}
Let
\begin{align*}
    &\tilde{\pi}_{1,\frac{1}{2}}:=v,\pi_{1,\frac{1}{2}}\\
    &\pi_{1,\frac{1}{2},1}:=\pi_{1,\frac{1}{2}}\setminus\{v_1\}=v_{1,\frac{1}{2}},v_{2,\frac{1}{2},1},v_{2,\frac{1}{2},1,1},\ldots
\end{align*}
One can show that $\tilde{\pi}_{1,\frac{1}{2}}$ is infinite and self-avoiding; and that
\begin{enumerate}[label=(\Alph*)]
\item The intersection of any two paths in $\pi_1,\pi_{10},\pi_{1,\frac{1}{2}}$ is $\{v_1\}$.
\item $\pi_{1,\frac{1}{2}}\cap \pi_0=\emptyset$ and $\pi_{1,\frac{1}{2}}\cap \pi_{01}=\emptyset$
\end{enumerate}

Let $v$ be the level-0 vertex, $v_0,v_1$ be level-1 vertices, and $v_{00},v_{01},v_{10},v_{1,\frac{1}{2}},v_{11}$ be the level-2 vertices. In general For $k\geq 2$, define the set $S_k$ of level-$k$ vertices as follows
\begin{align}
S_k:=\left\{v_{b}: b=(b_1,\ldots,b_k)\in \left\{0,\frac{1}{2},1\right\}^k; \mathrm{if}\ b_{j}=\frac{1}{2},\ \mathrm{then}\ j\geq 2,\ \mathrm{and}\ b_{j-1}=1.\right\}.\label{dsk}
\end{align}
Assume we defined all the level-$k$ vertices. For each $v_b\in S_k$, the following cases might occur
\begin{itemize}
    \item $b_k=0$: in this case we define 2 paths $\pi_{b,0}$, $\pi_{b,1}$ as defining $\pi_0$ and $\pi_1$ with $v_b$ replaced by $v$.
    \item $b_k=1$: in this case we define 3 paths $\pi_{b,0}$,
    $\pi_{b,\frac{1}{2}}$ $\pi_{b,1}$ as defining $\pi_{10}$ $\pi_{1,\frac{1}{2}}$ and $\pi_{11}$ with $v_b$ replaced by $v_1$.
    \item $b_k=\frac{1}{2}$: in this case we define 2 paths $\pi_{b,0}$, $\pi_{b,1}$ as defining $\pi_{0}$ and $\pi_{1}$  with $v_b$ replaced by $v$.
\end{itemize}

Then we find a tree $T$ whose vertex set consists of $\{v,v_0,v_1\}\cup_{k\geq 2}S_k$ ane edge set consists of all the edges along a path $\pi_{b}$ such that for some $k\geq 1$ $b=(b_1,\ldots,b_k)\in \left\{0,\frac{1}{2},1\right\}^k; \mathrm{if}\ b_{j}=\frac{1}{2},\ \mathrm{then}\ j\geq 2,\ \mathrm{and}\ b_{j-1}=1$
as a subgraph of $G$.

\subsection{Construction of an embedded tree for a graph with minimal vertex degree at least 5 and face degree at least 4}

Let $v\in V$. Let $v_{0}$, $v_{1}$ be two vertices adjacent to $v$ in $G$ such that $v$, $v_{0}$, $v_{1}$ share a face. Starting from $v,v_0$ construct a walk
\begin{align*}
\pi_0:=v,v_{0},v_{00},v_{000},\ldots,
\end{align*}
Starting from $v,v_1$ construct a walk 
\begin{align*}
\pi_1:=v,v_1,v_{11},v_{111},\ldots,
\end{align*}
such that
\begin{itemize}
    \item moving along $v_0,v,v_1$ in order, the face shared by $v_0,v,v_1$ is on the right; and
    \item moving along the walk $\pi_0$ starting from $v$, at each vertex $v_{0^k}$ ($k\geq 1$), there are exactly 2 incident faces on the right of $\pi_0$; and
    \item moving along the walk $\pi_1$ starting from $v$, at each vertex $v_{1^k}$ ($k\geq 1$), there are exactly 2 incident faces on the left of $\pi_1$.
\end{itemize}
Then we can show that both $\pi_0$ and $\pi_1$ are infinite and self-avoiding.

Let
\begin{align*}
   \pi_{1,1}:=\pi_1\setminus \{v\}=v_1,v_{11},v_{111},\ldots
\end{align*}

There exists $v_{01}\in V$ such that
\begin{itemize}
    \item $v_{01}$ is adjacent to $v_0$; and
    \item $v_0,v_{00},v_{01}$ share a face on the left of the walk $\pi_0$.
\end{itemize}
Similarly, there exist $v_{10},v_{1,\frac{1}{2}}\in V$ such that
\begin{itemize}
    \item both $v_{10}$ and $v_{1,\frac{1}{2}}$ are adjacent to $v_1$; and
    \item $v_1,v_{1,\frac{1}{2}},v_{11}$ share a face on the right of the walk $\pi_1$; and
    \item $v_1,v_{10},v_{1,\frac{1}{2}}$ share a face; moving along $v_{10},v_1,v_{1,\frac{1}{2}}$ in order, the face is on the right.
\end{itemize}
Note that $v_{10}\neq v$, $v_{01}\neq v$ and $v_{1,\frac{1}{2}}\neq v$ since each vertex in $G$ has degree at least 5.

Starting from $v_0,v_{01}$, construct a walk
\begin{align*}
    \pi_{01}:=v_0,v_{01},v_{011},v_{0111},\ldots
\end{align*}
Starting from $v_1,v_{10}$, construct a walk
\begin{align*}
    \pi_{10}:=v_1,v_{10},v_{100},v_{1000},\ldots
\end{align*}
such that
\begin{itemize}
    \item  moving along $v_{00},v_{0},v_{01}$ in order, the face shared by $v_{00},v_{0},v_{01}$ is on the right; and
     \item  moving along $v_{1,\frac{1}{2}},v_{1},v_{11}$ in order, the face shared by these vertices is on the right; and
    \item moving along the walk $\pi_{01}$ starting from $v_0$, at each vertex $v_{01^k}$ ($k\geq 1$), there are exactly 2 incident faces on the left of $\pi_{01}$; and
    \item moving along the walk $\pi_{10}$ starting from $v$, at each vertex $v_{10^k}$ ($k\geq 1$), there are exactly 2 incident faces on the right of $\pi_{10}$.
\end{itemize}
Then we can show that both walks are infinite and self-avoiding. Furthermore, let
\begin{align*}
    \tilde{\pi}_{01}:=v,\pi_{01};\qquad 
    \tilde{\pi}_{10}:=v,\pi_{10}
\end{align*} 
We can show that $\tilde{\pi}_{01}$ and $\tilde{\pi}_{10}$ are self-avoiding, and that $\pi_{01}$ and $\pi_{10}$ never intersect each other.

Starting $v_1,v_{1,\frac{1}{2}}$, we construct a walk
\begin{align*}
\pi_{1,\frac{1}{2}} :=   v_{1},v_{1,\frac{1}{2}}, v_{1,\frac{1}{2},1},v_{1,\frac{1}{2},1,1},\ldots
\end{align*}
 such that
\begin{itemize}
    \item moving along the walk $\pi_{1,\frac{1}{2}}$ staring from $v_1$, at each vertex $\pi_{1,\frac{1}{2},1^k}$ ($k\geq 0$), there are exactly 2 incident faces on the left.
\end{itemize}
Let
\begin{align*}
    &\tilde{\pi}_{1,\frac{1}{2}}:=v,\pi_{1,\frac{1}{2}}\\
    &\pi_{1,\frac{1}{2},1}:=\pi_{1,\frac{1}{2}}\setminus\{v_1\}=v_{1,\frac{1}{2}},v_{2,\frac{1}{2},1},v_{2,\frac{1}{2},1,1},\ldots
\end{align*}
We can show that $\tilde{\pi}_{1,\frac{1}{2}}$ is infinite and self-avoiding. Moreover, one can prove that
\begin{enumerate}[label=(\Alph*)]
\item The intersection of any two paths in $\pi_1,\pi_{10},\pi_{1,\frac{1}{2}}$ is $\{v_1\}$.
\item $\pi_{1,\frac{1}{2}}\cap \pi_0=\emptyset$ and $\pi_{1,\frac{1}{2}}\cap \pi_{01}=\emptyset$
\end{enumerate}

Let $v$ be the level-0 vertex, $v_0,v_1$ be level-1 vertices, and $v_{00},v_{01},v_{10},v_{1,\frac{1}{2}},v_{11}$ be the level-2 vertices. In general for $k\geq 2$, define the set $S_k$ of level-$k$ vertices as in (\ref{dsk}).

Assume we find all the level-$k$ vertices. For each $v_b\in S_k$, the following cases might occur
\begin{itemize}
    \item $b_k=0$: in this case we define 2 paths $\pi_{b,0}$, $\pi_{b,1}$ as defining $\pi_0$ and $\pi_1$ with $v_b$ replaced by $v$.
    \item $b_k=1$: in this case we define 3 paths $\pi_{b,0}$,
    $\pi_{b,\frac{1}{2}}$ $\pi_{b,1}$ as defining $\pi_{10}$ $\pi_{1,\frac{1}{2}}$ and $\pi_{11}$ with $v_b$ replaced by $v_1$.
    \item $b_k=\frac{1}{2}$: in this case we define 2 paths $\pi_{b,0}$, $\pi_{b,1}$ as defining $\pi_{0}$ and $\pi_{1}$  with $v_b$ replaced by $v$.
\end{itemize}

Then we find a tree $T$ whose vertex set consists of $\{v,v_0,v_1\}\cup_{k\geq 2}S_k$ and edge set consists of all the edges along a path $\pi_{b}$ such that for some $k\geq 1$ $b=(b_1,\ldots,b_k)\in \left\{0,\frac{1}{2},1\right\}^k; \mathrm{if}\ b_{j}=\frac{1}{2},\ \mathrm{then}\ j\geq 2,\ \mathrm{and}\ b_{j-1}=1$
as a subgraph of $G$.

\bigskip
\noindent\textbf{Acknowledgements.}\ ZL acknowledges support from National Science Foundation DMS 1608896 and Simons Foundation grant 638143. 

\bibliography{psg}

\begin{thebibliography}{10}

\bibitem{Bab97}
L.~Babai.
\newblock The growth rate of vertex-transitive planar graphs.
\newblock In {\em Proceedings of the Eighth Annual ACM--SIAM Symposium on
  Discrete Algorithms (New Orleans, LA, 1997)}, pages 564--573. New York, 1997.

\bibitem{BLPS1}
I.~Benjamini, R.~Lyons, Y.~Peres, and O.~Schramm.
\newblock Critical percolation on any nonamenable group has no infinite
  clusters.
\newblock {\em Ann. Probab.}, 27:1347--1356, 1999.

\bibitem{BLPS2}
I.~Benjamini, R.~Lyons, Y.~Peres, and O.~Schramm.
\newblock Group-invariant percolation on graphs.
\newblock {\em Geom. Funct. Anal.}, 9:29--66, 1999.

\bibitem{BNY11}
I.~Benjamini, A.~Nachmias, and Y.~Peres.
\newblock Is the critical percolation probability local?
\newblock {\em Probab. Theory Relat. Fields}, 149:261--269, 2011.

\bibitem{bs96}
I.~Benjamini and O.~Schramm.
\newblock Percolation beyond $\mathbb{Z}^d$, many questions and a few answers.
\newblock {\em Electronic Communications in Probability}, 1:71--82, 1996.

\bibitem{BSte}
I.~Benjamini and O.~Schramm.
\newblock Every graph with a positive {C}heeger constant contains a tree with a
  positive {C}heeger constant.
\newblock {\em GAFA, Geom. funct. anal.}, 7:403--419, 1997.

\bibitem{bsjams}
I.~Benjamini and O.~Schramm.
\newblock Percolation in the hyperbolic plane.
\newblock {\em J. Amer. Math. Soc.}, 14:487--507, 2001.

\bibitem{BH57}
S.~Broadbent and J.~Hammersley.
\newblock Percolation processes: I. crystals and mazes.
\newblock {\em Mathematical Proceedings of the Cambridge Philosophical
  Society}, 53:479--497, 1957.

\bibitem{CFKP97}
J.~Cannon, W.~Floyd, R.~Kenyon, and W.~Parry.
\newblock Hyperbolic geometry.
\newblock {\em Flavors of Geometry, MSRI Publications}, 31, 1997.

\bibitem{HDC18}
H.~Duminil-Copin.
\newblock Sixty years of percolation.
\newblock {\em proceedings of the ICM Rio}, 2018.

\bibitem{grgP}
G.~Grimmett.
\newblock {\em Percolation}.
\newblock Springer, 1999.

\bibitem{GrL20}
G.~Grimmett and Z.~Li.
\newblock Cubic graphs and the golden mean.
\newblock {\em Discrete Mathematics}, 343:32pp, 2020.

\bibitem{GrZL22}
G.~Grimmett and Z.~Li.
\newblock Hyperbolic site percolation.
\newblock 2022.
\newblock \url{https://arxiv.org/abs/2203.00981}.

\bibitem{GrZL221}
G.~Grimmett and Z.~Li.
\newblock Percolation critical probabilities of matching lattice-pairs.
\newblock 2022.
\newblock \url{https://arxiv.org/abs/2205.02734}.

\bibitem{HP}
O.~H\"aggstr\"om and Y~Peres.
\newblock Monotonicity of uniqueness for percolation on cayley graphs: all
  infinite clusters are born simultaneously.
\newblock {\em Probab. Theory Relat. Fields}, 113:273--285, 1999.

\bibitem{hps98}
O.~H\"aggstr\"om, Y.~Peres, and R.H. Schonmann.
\newblock Percolation on transitive graphs as a coalescent process: relentless
  merging followed by simultaneous uniqueness.
\newblock In {\em Perplexing Probability Problems: Papers in Honor of H.
  Kesten}. Birkh\"auser, Boston, 1998.

\bibitem{HS1}
Z.~He and O.~Schramm.
\newblock Fixed points, koebe uniformization and circle packings.
\newblock {\em Ann. Math.(2)}, 137(2):369--406, 1993.

\bibitem{HS2}
Z.~He and O.~Schramm.
\newblock Hyperbolic and parabolic packings.
\newblock {\em Discret. Comput. Geom.}, 14(2):123--149, 1995.

\bibitem{HT21}
T.~Hutchcroft and M.~Tointon.
\newblock Non-triviality of the phase transition for percolation on
  finitetransitive graphs.
\newblock 2021.

\bibitem{K82}
H.~Kesten.
\newblock {\em {Percolation Theory for Mathematicians}}.
\newblock Birkh\"auser, Boston, 1982.
\newblock \url{https://pi.math.cornell.edu/~kesten/kesten-book.html}.

\bibitem{ZL17}
Z.~Li.
\newblock Constrained percolation, {I}sing model and {XOR} {I}sing model on
  planar lattices.
\newblock {\em Random Structures and Algorithms}, pages 474--525, 2020.

\bibitem{ZL231}
Z.~Li.
\newblock Planar site percolation via tree embeddings.
\newblock 2023.
\newblock \url{https://arxiv.org/abs/2304.00923}.

\bibitem{Nach}
A.~Nachmias.
\newblock {\em Planar maps, random walks and circle packing}, volume 2243 of
  {\em Lecture Notes in Mathematics}.
\newblock Springer, Cham, 2020.
\newblock \'{E}cole d'\'{E}t\'{e} de {P}robabilit\'{e}s de Saint-Flour
  XLVIII---2018,.

\bibitem{RS98}
R.H. Schonmann.
\newblock Stability of infinite clusters in supercritical percolation.
\newblock {\em Probab. Theory Relat. Fields}, 113:287--300, 1999.

\bibitem{SE64}
M.~F. Sykes and J.~W. Essam.
\newblock Exact critical percolation probabilities for site and bond problems
  in two dimensions.
\newblock {\em J. Math. Phys.}, 5:1117--1127, 1964.

\end{thebibliography}
\bibliographystyle{plain}
\end{document}